\newtheorem{thm}{Theorem}[section]
\newtheorem{lem}[thm]{Lemma}
\newtheorem{prop}[thm]{Proposition}
\newtheorem{cor}[thm]{Corollary}
\newtheorem{NN}[thm]{}
\theoremstyle{definition}\newtheorem{df}[thm]{Definition}
\theoremstyle{definition}
\theoremstyle{definition}\newtheorem{exm}[thm]{Example}
\renewcommand{\phi}{\varphi}
\newcommand{\N}{\mathbb{N}}
\newcommand{\Z}{\mathbb{Z}}
\newcommand{\Q}{\mathbb{Q}}
\newcommand{\R}{\mathbb{R}}
\newcommand{\T}{\mathbb{T}}
\newcommand{\Aff}{\operatorname{Aff}}
\newcommand{\hm}{homomorphism}
\newcommand{\dt}{\delta}
\newcommand{\ep}{\epsilon}
\newcommand{\andeqn}{\,\,\,{\rm and}\,\,\,}
\newcommand{\rforal}{\,\,\,{\rm for\,\,\,all}\,\,\,}
\newcommand{\CA}{$C^*$-algebra}
\newcommand{\SCA}{$C^*$-subalgebra}
\newcommand{\af}{{\alpha}}
\newcommand{\bt}{{\beta}}
\newcommand{\beq}{\begin{eqnarray}}
\newcommand{\eneq}{\end{eqnarray}}
\newcommand{\tforal}{\,\,\,\text{for\,\,\,all}\,\,\,}
\newcommand{\tand}{\,\,\,\text{and}\,\,\,}
\newcommand{\Om}{\Omega}
\title{Minimal dynamical systems on
connected odd dimensional spaces}
\author{Huaxin Lin
 }
\date{}
\begin{document}

\maketitle

\begin{abstract}
Let $\bt: S^{2n+1}\to S^{2n+1}$ be a minimal homeomorphism ($n\ge 1$). We show that
the crossed product $C(S^{2n+1})\rtimes_\bt \Z$ has rational tracial rank at most one.
More generally, let $\Om$ be a connected compact metric space with finite covering dimension and
with $H^1(\Om, \Z)=\{0\}.$ Suppose that $K_i(C(\Om))=\Z\oplus G_i$ for some finite abelian group $G_i,$ $i=0,1.$ 
Let $\bt: \Om\to\Om$ be a minimal homeomorphism. We also show that
$A=C(\Om)\rtimes_\bt\Z$ has rational tracial rank at most one and is
classifiable.
In particular, this applies to the minimal dynamical systems on
odd dimensional real projective spaces.
This was done by studying the minimal homeomorphisms on $X\times \Om,$ where
$X$ is the Cantor set.
\end{abstract}

\section{Introduction}
Let $\Om$ be a compact metric space and let $\af: \Om\to \Om$ be a minimal homeomorphism. 
We study the  resulting crossed product \CA\, $C(\Om)\rtimes_\af\Z.$  There are
interesting interplays between minimal dynamical systems and the study of \CA s.
A classical result of Giordano, Putnam and Skau \cite{GPS} showed
that two Cantor minimal systems are strong orbit equivalent if and only if the associated crossed
product \CA s are isomorphic. The \CA\, theoretic aspect of their result is indebted to
the fact that the crossed product \CA s are unital simple A$\T$-algebras of real rank zero and
belong to the class of classifiable \CA s (see Definition \ref{DfA}  below).  Therefore these \CA s  are classified up to isomorphisms by their Elliott invariant,
namely, in this case, by their ordered $K$-theory. In turn, the Cantor minimal systems are classified up to
strong orbit equivalence by their induced $K$-theory.   \CA s of the form $C(\Om)\rtimes_\af\Z$ are always
simple  when $\af$ are minimal. These \CA s  provide a rich source of unital separable amenable simple \CA s which satisfy
the so-called Universal Coefficient Theorem. On the other hand, the rapidly developing Elliott  program, otherwise known as the program of classification of amenable
\CA s by $K$-theoretical invariants, provides a possible way to characterize minimal dynamical
systems by their $K$-theoretical invariants.  It is therefore important to know
when $C(\Om)\rtimes_\af\Z$ belongs to the classifiable class of amenable simple \CA s (in the sense of the Elliott program).  Elliott and Evans (\cite{EE}) showed that all irrational rotation algebras,
which are crossed product \CA s from  minimal dynamical systems on the circle, are classifiable. In fact, they are unital $A\T$-algebras of real rank zero.  Let $A=C(\Om)\rtimes_\af\Z.$ In \cite{LP}, it is shown that, if
$\Om$ has finite covering dimension and
$\rho_A(K_0(A)),$ the tracial image of $K_0(A),$ is dense in $\Aff(T(A)),$ the space of
real continuous affine functions on the tracial state space, then $A$ has tracial rank zero
(the converse also holds). Consequently  $A$ is classifiable.  In the case that $\Om$ is connected and $(\Om, \af)$ is unique
ergodic, this result states that if $(\Om, \af)$ has an irrational rotation number, then
$A$ is classifiable. In particular, this recovers the previously mentioned  case of  irrational
rotations on the circle.  With recent  developments in
the Elliott program (\cite{LN}, \cite{W1}  and \cite{Lnappen}), the classifiable
\CA s now  include the class of \CA s
which are rationally of tracial rank at most one.  The result of \cite{LP} was  pushed  further by
Toms and Winter to 
great generality: if projections in $A$ separate the tracial states,
then $A$ has rational tracial rank zero. Moreover these crossed products are also classifiable
by the Elliott invariant (see \cite{TW}).

On the other hand, during these developments, minimal dynamical systems on $X\times \T,$
where $X$ is the Cantor set and $\T$ is the circle, have been studied (see %\cite{LM1},
\cite{LM2}
and \cite{LM3}).  More general cases were also studied in \cite{Su}.   In both \cite{LP} and
\cite{TW}, the Putnam algebra $A_x$ was used as the main bridge. One may view $A_x$ as a
large \SCA\, of $A=C(\Om)\rtimes_\af\Z$ by ``taking away" one point. In \cite{LM2} and \cite{LM3},
a smaller \SCA\, is used. That \SCA\, may be viewed as a  \SCA\, by ``taking  away"
a circle (an idea of Hiroki Matui).   This method seemed to be too special to be useful in more general cases. However, it has been recently adopted by K. Strung  \cite{KS} to obtain very interesting result about crossed products
of  certain minimal systems on the odd spheres  $S^{2n+1}. $  Strung showed that, by studying
the minimal systems of product type $X\times S^{2n+1},$ one can provide examples
of non-unique ergodic minimal dynamical systems on the odd spheres whose crossed product \CA s are classifiable.
Let $A$ be the crossed product obtained from the minimal system on the odd sphere.
It should be noted $A$ may not have rational tracial rank zero. In particular,
its projections may not separate the tracial states.   Nevertheless, $A$ has rational tracial rank at most one, i.e., $A\otimes U$ has tracial rank at most one for any UHF-algebra $U$ of infinite type.
Therefore a more general classification result in \cite{Lninv} can be applied.

  In \cite{KS}, the minimal homeomorphisms on the odd spheres are  assumed
  to be limits of periodic homeomorphisms
  constructed by ``fast approximation-conjugation".   In this note we will study the general minimal dynamical systems
 on the odd spheres as well as on the odd dimensional real projective spaces. We show
 that crossed product \CA s from any minimal dynamical systems on odd spheres or odd dimensional
 real projective spaces have rational tracial rank at most one and are classifiable. It should be mentioned that there are no minimal homeomorphisms on even spheres or on the even dimensional real projective spaces because of the existence of fixed points.  We actually prove much more general results (see \ref{MT2} and 
 \ref{MT3} below).
 %and \ref{MT4}).
The methods we used here to study the minimal dynamical systems on the product spaces of the form $X\times \Om,$ where $X$ is the Cantor set and $\Om$ is a connected space are those
developed in \cite{LM2} and \cite{LM3}.  We also use a recent uniqueness theorem (see \ref{TUni} below)
from \cite{LnHom}.  We continue to use the argument of Strung as well as an embedding
result of Winter  (\cite{W2}). The classification result in \cite{Lninv} is also applied.

This note is organized as follows. Section 2 serves as preliminaries of this note. In Section 3,
we study the general minimal dynamical systems on the product spaces $X\times \Om,$
where $X$ is the Cantor set and $\Om$ is a connected compact space.  Examples of minimal dynamical systems studied in Section 3 are presented in Section 4. Applications are presented
in Section 5.

{\bf Acknowledgements}  This work was done during the author's stay at the Research Center
for Operator Algebras at East China Normal University.

\section{Preliminaries}

\begin{df}\label{Dat}
{\rm Let $A$ be a unital \CA. Denote by $U(A)$ the unitary group of $A$ and
$U_0(A)$ the connected component of $U(A)$ containing the identity.
Denote by $CU(A)$ the closure of the commutator subgroup of $U_0(A).$
Denote by $T(A)$ the tracial state space of $A.$
We also use $T(A)$ for traces of the form
$\tau\otimes {\rm Tr}$ on $M_n(A)$ for all integer $n,$ where ${\rm Tr}$ is the standard (un-normalized) trace on $M_n.$
Denote by $\rho_A: K_0(A)\to \Aff(T(A))$ defined by
$\rho_A([p])=\tau(p)$ for all projections in $M_n(A),$ $n=1,2,....$
}
\end{df}

\begin{df}\label{Ddet}
{\rm
Let $A=M_n$ and $a\in A.$ We use ${\rm det}(a)$ for the usual determinant.
If $A=C(X)\otimes M_n,$ we will often identify $A$ with $C(X, M_n).$
If $f\in A,$ we use ${\rm det}(f)$ for the function ${\rm det}(f)(x)$ in $C(X).$

Let $A$ be a unital \CA\, with $T(A)\not=\emptyset$ and let $u\in U_0(A).$
Let $\{u(t): t\in [0,1]\}\subset U_0(A)$ be a piecewise smooth continuous path
with $u(0)=u$ and $u(1)=1_A.$
Define
\beq\label{Ddet-1}
{\tilde \Delta}_\tau(u(t))={1\over{2\pi i}}\int_0^1 \tau({du(t)\over{dt}}u(t)^*)dt\rforal \tau\in T(A).
\eneq
If $u\in CU(A),$ then ${\tilde \Delta}_\tau(u(t))\in \overline{\rho_A(K_0(A))}.$
This de La Harp-Skandalis determinant (which is independent of the choice of the path)
  gives
a  \hm
\beq\label{Ddet-2}
\Delta: U_0(A)/CU(A)\to \Aff(T(A))/{\overline{\rho_A(K_0(A))}}
\eneq
(see \cite{KT}).

If $u\in U(A),$ we will use $\overline{u}$ for its image in $U(A)/CU(A).$
}
\end{df}

\begin{df}\label{Dphb}
{\rm
Let $\Om$ be a compact metric space. Denote by ${\rm Homeo}(\Om)$ the set of all homeomorphisms on $\Om$ equipped with the topology of pointwise convergence.
Let $\bt\in {\rm Homeo}(\Om).$ Denote by
${\tilde \bt}: C(\Om)\to C(\Om)$ the automorphism defined by
${\tilde \bt}(f)=f\circ\bt^{-1}$ for all $f\in C(\Om).$
If $F\subset \Om,$ denote by $\chi_F$ the characteristic function of $F.$
When $F$ is a clopen set, $\chi_F\in C(\Om).$

}
\end{df}

\begin{lem}{\rm (Lemma 2.1 of \cite{LM2})}\label{LXom}
Let $X$ be the Cantor set and $\Om$ be a connected compact metric space.
Let $\bt\in {\rm Homeo}(X\times \Om).$
Then there is $\gamma\in {\rm Homeo}(X)$ and a continuous map
$\phi: X\to {\rm Homeo}(\Om)$ such that $\bt(x, \xi)=(\gamma(x), \phi_x(\xi))$ for all $(x, \xi)\in X\times \Om.$
\end{lem}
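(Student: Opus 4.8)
The plan is to exploit the fact that, because $\Om$ is connected while $X$ is totally disconnected, the connected components of $X\times\Om$ are precisely the fibres $\{x\}\times\Om$, and to use that any homeomorphism must permute the connected components. First I would identify those components. Let $\pi_X:X\times\Om\to X$ and $\pi_\Om:X\times\Om\to\Om$ denote the coordinate projections. If $C\subset X\times\Om$ is connected, then $\pi_X(C)$ is a connected subset of the Cantor set $X$, hence a single point, so $C\subset\{x\}\times\Om$ for some $x\in X$. Conversely each fibre $\{x\}\times\Om$ is connected, since $\Om$ is. Therefore the connected components of $X\times\Om$ are exactly the sets $\{x\}\times\Om$, $x\in X$.

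Next, since $\bt$ and $\bt^{-1}$ are homeomorphisms they carry connected components onto connected components. Hence for each $x\in X$ there is a unique point, which I call $\gamma(x)\in X$, with $\bt(\{x\}\times\Om)=\{\gamma(x)\}\times\Om$; applying the same reasoning to $\bt^{-1}$ shows that $\gamma$ is a bijection of $X$. To see that $\gamma$ is continuous, fix any $\xi_0\in\Om$ and observe that $\gamma(x)=\pi_X(\bt(x,\xi_0))$, a composition of continuous maps. A continuous bijection of the compact Hausdorff space $X$ is a homeomorphism, so $\gamma\in{\rm Homeo}(X)$.

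Now I would define $\phi_x:\Om\to\Om$ by $\phi_x(\xi)=\pi_\Om(\bt(x,\xi))$, so that by construction $\bt(x,\xi)=(\gamma(x),\phi_x(\xi))$ for all $(x,\xi)$. Each $\phi_x$ is the composite of the homeomorphism $\bt|_{\{x\}\times\Om}:\{x\}\times\Om\to\{\gamma(x)\}\times\Om$ with the projection $\pi_\Om$, which restricts to a homeomorphism on each fibre; hence $\phi_x\in{\rm Homeo}(\Om)$, with inverse furnished by the analogous map built from $\bt^{-1}$. Finally, continuity of $\phi:X\to{\rm Homeo}(\Om)$ in the topology of pointwise convergence is immediate from continuity of $\bt$: if $x_n\to x$ in $X$, then for each fixed $\xi$ we have $\bt(x_n,\xi)\to\bt(x,\xi)$, and applying $\pi_\Om$ yields $\phi_{x_n}(\xi)\to\phi_x(\xi)$.

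I expect the only genuine content to lie in the component identification of the first paragraph; everything afterward is a routine verification that the induced maps are continuous and invertible, the one mildly delicate point being the use of compactness of $X$ to upgrade the continuous bijection $\gamma$ to a homeomorphism.
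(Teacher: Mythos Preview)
Your argument is correct and follows essentially the same route as the paper's proof: both identify the connected components of $X\times\Om$ as the fibres $\{x\}\times\Om$, define $\gamma$ and $\phi_x$ via the coordinate projections applied to $\bt$, and verify the required continuity and invertibility. Your write-up is in fact more detailed than the paper's (which leaves the check that $\gamma\in{\rm Homeo}(X)$ and the continuity of $\phi$ as ``ready to see'' and ``easy to check''), but the underlying idea is identical.
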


\begin{proof}
Let $p_X: X\times \Om\to X$ and $p_\Om: X\times \Om\to \Om$ be
projection maps such that $p_X(x, \xi)=x$ and $p_\Om(z, \xi)=\xi$
for all $(x, \xi)\in X\times \Om.$
Fix $(x, \xi)\in X\times \Om.$ Then $\{x\}\times \Om$ is the connected component
of $X\times \Om$ containing $(x, \xi).$ The homeomorphism $\bt$ maps
it into a connected component containing $\bt(x, \xi)=(x_1, \xi_1),$
$x_1=p_X(\bt(x,\xi))$ and $\xi_1=p_\Om(\bt(x, \xi)).$
Therefore as just mentioned, the component is  $\{x_1\}\times \Om.$
Define $\gamma(x)=p_X(\bt(x, \xi))$ for $x\in X.$  Since $p_X(\bt(x, \xi))=
p_X(\bt(x, \xi')),$ it is a well-defined map. Since $\bt$ is a homeomorphism,
it is also ready to see that $\gamma\in {\rm Homeo}(X).$
Fix $x\in X,$ then map $\phi_x(\xi)=p_\Om(\bt(x, \xi))$ is a homeomorphism
from $\Om$ onto $\Om.$ It is easy to check that $\phi: X\to {\rm Homeo}(\Om)$
is continuous. 
\end{proof}

\begin{NN}\label{DKnotation}
{\rm
Let $X$ be the Cantor set and $\Om$ be a connected compact metric space.
Then
$$
K_0(C(X\times\Om))=C(X, K_0(C(\Om)))\andeqn K_1(X\times \Om)=C(X, K_1(C(\Om))),
$$
where the group $K_i(C(\Om))$ is viewed as a discrete space, $i=0,1.$
Moreover
$$
K_0(C(X\times \Om))_+=\{f\in C(X, K_0(C(\Om))): f(x)\in K_0(C(\Om))_+\}=C(X,K_0(C(\Om)))_+,
$$
Let $(X, \af)$ be a Cantor minimal system.
% and let $\phi: X\to Homeo(\Om)$ be a continuous map.
%We make assumption that
%$[\phi_x]=[{\rm id}_{C(\Om)}$ for each $x\in X.$
Denote
\beq\label{DKn-0}
K^0(X,\af, K_0(C(\Om)))=C(X,K_0(C(\Om)))/\{f-f\circ\af^{-1}: f\in C(X, K_0(C(\Om)))\}
\eneq
equipped with the positive cone
\beq\label{DKn-0+}
K^0(X, \af, K_0(C(\Om)))_+=\{[f]: f\in C(X,K_0(C(\Om)))_+\}.
\eneq
Denote
\beq\label{DKn-1}
{\rm ker}({\rm id}_X-\af^{-1})_{*i}&=&\{f\in C(X, K_i(C(\Om))): f-f\circ \af^{-1}=0\}.
\eneq
Denote
\beq\label{DKn-0}
K^i(X,\af\times \phi, K_0(C(\Om)))=C(X,K_i(C(\Om)))/\{f-f\circ (\af\times \phi)^{-1}: f\in C(X, K_i(\Om))\}
\eneq
with $K^0(X,\af\times \phi, K_0(C(\Om)))$ equipped with the positive cone
\beq\label{DKn-0+}
K^0(X, \af\times \phi, K_0(C(\Om)))_+=\{[f]: f\in C(X,K_0(C(\Om)))_+\}.
\eneq
For $i=0,1,$ denote
\beq\label{DKn-1}
{\rm ker}({\rm id}_{X\times \Om}-(\af\times \phi)^{-1})_{*i}&=&
\{f\in C(X, K_i(C(\Om))): f-f\circ (\af\times \phi)^{-1}=0\}.
\eneq

}
\end{NN}

\begin{prop}\label{AKT}
Let $(X, \af)$ be a Cantor minimal system, $\Om$ be a connected compact metric space and let $\phi: X\to {\rm Homeo}(\Om).$
%such that $(\phi_x)_{*0}={\rm id}_{K_0(C(\Om))}.$
Let $A=C(X\times \Om)_{\af\times \phi}\Z.$ Then there are short exact sequences
\beq\label{AKT-1}
&&0\to K^0(X,\af\times \phi, K_0(\Om))\to K_0(A)\to {\rm ker} ({\rm id}_{X\times\Om}-(\af\times\phi)^{-1}_{*1})\to 0\tand\\\label{AKT-2}
&&0\to K^1(X, \af\times \phi, K_1(\Om))\to K_1(A)\to  {\rm ker}({\rm id}_{X\times \Om}-(\af\times \phi)^{-1}_{*0})\to 0.
\eneq
Moreover,
\beq\label{AKT-3}
\rho_A(K^0(X,(\af\times \phi)^{-1}, K_0(\Om)))=\rho_A(K^0(X, \af,\Z)).
\eneq
Furthermore, if $(\phi_x)_{*i}={\rm id}_{K_i(C(\Om))}$ for all $x\in X,$ $i=0,1,$ then
\beq\label{AKT-3+}
&&0\to K^0(X,\af\times {\rm id}, K_0(\Om))\to  K_0(A)\to K_1(C(\Om))\to 0\tand\\\label{AKT-3++}
&&0\to K^1(X, \af\times {\rm id}, K_1(\Om))\to K_1(A)\to  K_0(C(\Om))\to 0.
\eneq
\end{prop}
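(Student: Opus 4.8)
The plan is to read off the two exact sequences directly from the Pimsner--Voiculescu six-term exact sequence for the crossed product $A=C(X\times\Om)\rtimes_{\af\times\phi}\Z$. Writing $\gamma=\af\times\phi$ and $B=C(X\times\Om)$, the PV sequence is
\[K_0(B)\xrightarrow{\,\id-\gamma_*^{-1}\,}K_0(B)\to K_0(A)\to K_1(B)\xrightarrow{\,\id-\gamma_*^{-1}\,}K_1(B)\to K_1(A)\to K_0(B),\]
and splitting it at the connecting maps yields, for $i=0,1$, short exact sequences of the form $0\to\Coker(\id-\gamma_*^{-1}\ \text{on}\ K_i(B))\to K_i(A)\to\Ker(\id-\gamma_*^{-1}\ \text{on}\ K_{i-1}(B))\to 0$. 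The first task is then to match these groups with the objects of \ref{DKnotation}. Using the identification $K_i(C(X\times\Om))=C(X,K_i(C(\Om)))$ recorded there (valid because $\Om$ is connected, so the Cantor factor contributes only $C(X,\cdot)$), the cokernel of $\id-\gamma_*^{-1}$ on $K_i(B)$ is by definition $K^i(X,\af\times\phi,K_i(\Om))$ and its kernel is $\Ker(\id-(\af\times\phi)^{-1}_{*i})$. Substituting these gives \eqref{AKT-1} and \eqref{AKT-2}.

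For \eqref{AKT-3} the key point is that $\rho_A$ only detects the rank part of $K_0(B)$. Since $\Om$ is connected, the rank of any projection $p\in M_k(C(X\times\Om))$ is constant along each fibre $\{x\}\times\Om$ and hence defines an element of $C(X,\Z)$; equivalently, the inclusion of constants $C(X)\to C(X\times\Om)$ induces on $K_0$ the unit map $C(X,\Z)\to C(X,K_0(C(\Om)))$, and composing with $\rho_A$ annihilates the torsion summand $G_0$ of $K_0(C(\Om))$ because the pairing with traces is real-valued. Passing to the crossed product, the image of $K^0(X,\af,\Z)=K_0(C(X)\rtimes_\af\Z)$ and the image of $K^0(X,\af\times\phi,K_0(\Om))$ in $K_0(A)$ therefore have the same tracial image, which is \eqref{AKT-3}.

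Finally, under the hypothesis $(\phi_x)_{*i}=\id_{K_i(C(\Om))}$ for all $x\in X$, the $\Om$-direction acts trivially on $K$-theory, so the induced automorphism $\gamma_*^{-1}$ on $C(X,K_i(C(\Om)))$ reduces to the pullback $f\mapsto f\circ\af^{-1}$ by the Cantor shift alone. Because $(X,\af)$ is minimal, a continuous $\af$-invariant function $X\to K_i(C(\Om))$ into a discrete group is constant, so $\Ker(\id-(\af\times\phi)^{-1}_{*i})$ is exactly the subgroup of constant functions, naturally isomorphic to $K_i(C(\Om))$. Feeding $\Ker(\id-(\af\times\phi)^{-1}_{*1})\cong K_1(C(\Om))$ and $\Ker(\id-(\af\times\phi)^{-1}_{*0})\cong K_0(C(\Om))$ into \eqref{AKT-1} and \eqref{AKT-2} produces \eqref{AKT-3+} and \eqref{AKT-3++}.

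The PV machinery and the minimality argument are routine; I expect the main obstacle to be \eqref{AKT-3}, where one must check that the identification of tracial values through the rank function is genuinely compatible with passing to the quotient by the coboundaries $\{f-f\circ(\af\times\phi)^{-1}\}$ and with the induced map into $\Aff(T(A))$, so that no tracial data beyond the $C(X,\Z)$-rank survives.
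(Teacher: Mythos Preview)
Your proposal is correct and follows essentially the same route as the paper: the two short exact sequences are read off directly from the Pimsner--Voiculescu six-term sequence, \eqref{AKT-3} comes from the observation that on a connected $\Om$ every trace on $K_0(C(\Om))$ agrees with the rank, and the final identifications use minimality of $\af$ to force invariant functions in $C(X,K_i(C(\Om)))$ to be constant. One small remark: your reference to ``the torsion summand $G_0$'' imports a hypothesis not present in this proposition --- the argument that traces see only the rank works for arbitrary $K_0(C(\Om))$ once $\Om$ is connected, so you should phrase \eqref{AKT-3} without assuming a splitting $K_0(C(\Om))=\Z\oplus G_0$.
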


\begin{proof}
It is clear that (\ref{AKT-1}) and (\ref{AKT-2}) follow from the Pimsner-Voiculescu six-term exact sequence directly.
To see the second statement, we note that
$\Om$ is connected and all tracial states on $K_0(C(\Om))$ agree with the rank.
%By the assumption that $(\phi_x)_{*0}={\rm id}_{K_0(C(\Om))}$ and $\af$ is minimal,
%$${\rm ker} (\id_{K_0(C(X, K_0(C(\Om)))}-(\af\times\phi^{-1})_{*0})$$
 %is the subgroup of constant maps in $C(X, K_0(C(\Om)))$ which is isomorphic
%to $K_0(C(\Om)).$ Thus, again, by Pimsner-Voiculescu six-term exact sequence, (\ref{AKT-2}) follows.
To see the last statement,
we note that
$$
K_i(C(X\times \Om))=C(X, K_i(C(\Om))),\,\,\,i=0,1.
$$
In the case $(\phi_x)_{*i}={\rm id}_{K_i(C(\Om))},$ $i=0,1,$ (only) constant elements are invariant under $(\af\times \phi)^{-1}_{*i}.$ It follows that
$$
{\rm ker} ({\rm id}_{X\times\Om}-(\af\times \phi)^{-1}_{*i})=K_{i+1}(C(\Om)),\,\,\,i=0,1.
$$

\end{proof}

\begin{lem}\label{Ltrace}
Let $\Om$ be a compact metric space with $U(C(\Om))=U_0(C(\Om))$ and $\bt\in {\rm Homeo}(\Om).$
Then
\beq\label{Ltrace-1}
\rho_B(K_0(B))=\rho_A(\imath_{*0}(K_0(C(\Om)))),
\eneq
where $B=C(\Om)\rtimes_\bt \Z$ and $\imath: C(\Om)\to B$ is the natural
embedding.
Consequently,
in \ref{AKT}, if $H^1(\Om, \Z)=\{0\},$ then
$$
\rho_A(K_0(A))=\rho_A(K^0(X, \af, \Z)).
$$
\end{lem}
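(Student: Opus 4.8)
The plan is to prove the two inclusions separately; the reverse inclusion $\rho_B(\imath_{*0}(K_0(C(\Om))))\subseteq \rho_B(K_0(B))$ is automatic from $\imath_{*0}(K_0(C(\Om)))\subseteq K_0(B)$, so the content is $\rho_B(K_0(B))\subseteq \rho_B(\imath_{*0}(K_0(C(\Om))))$. First I would recall that every trace on $B=C(\Om)\rtimes_\bt\Z$ has the form $\tau_\mu$ for a $\bt$-invariant Borel probability measure $\mu$ on $\Om$: a trace restricts to a $\bt$-invariant state on $C(\Om)$, and traciality forces the Fourier coefficients against the canonical unitary $u$ to vanish off the diagonal, so $\tau=\tau_\mu\circ E$ for the canonical conditional expectation $E$. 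Thus it suffices to produce, for each $x\in K_0(B)$, a single $y\in K_0(C(\Om))$ (independent of $\mu$) with $\rho_B(x)(\tau_\mu)=\rho_B(\imath_{*0}(y))(\tau_\mu)$ for every invariant $\mu$.

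Next, by the Pimsner--Voiculescu sequence (as in \ref{AKT}) I would use the short exact sequence $0\to \imath_{*0}(K_0(C(\Om)))\to K_0(B)\xrightarrow{\partial}\ker(\id-\bt^{-1}_{*1})\to 0$, with $\ker(\id-\bt^{-1}_{*1})$ sitting in $K_1(C(\Om))$. Hence, modulo $\imath_{*0}(K_0(C(\Om)))$, a class $x$ is detected by $\partial(x)=[v]$ with $v\in U_m(C(\Om))$ satisfying the kernel condition $v^*\tilde\bt(v)\in U_0(M_m(C(\Om)))$. The core claim is that the trace of such a lift is given, modulo $\rho_B(\imath_{*0}(K_0(C(\Om))))$, by a rotation number $R_\mu([v]):=\tilde\Delta_{\tau_\mu}(w(t))$, where $w(t)$ is a piecewise-smooth path in $U_m(C(\Om))$ with $w(0)=v^*\tilde\bt(v)$ and $w(1)=1$. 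Matching the $K$-theory boundary map with this de La Harpe--Skandalis determinant is the step I expect to be the main obstacle; I would pin it down by tracking an explicit Pimsner--Voiculescu (Bott/exponential) lift building a projection in $M_{2m}(B)$ out of $v$ and $u$ and computing its trace, the mechanism being the standard rotation-number computation for automorphisms.

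Granting this, the computation is clean and is exactly where the hypothesis enters. By formula (\ref{Ddet-1}), for $\tau_\mu$ one has $\tau_\mu(\dot w\, w^*)=\int_\Om \tfrac{d}{dt}\log\det w(t)\,d\mu$, so $R_\mu([v])$ depends only on the scalar path $\det w(t)\in U(C(\Om))$, and through its endpoints only on $\det(v^*\tilde\bt(v))=\det(v)^*\,\tilde\bt(\det(v))$. Since $\det(v)\in U(C(\Om))=U_0(C(\Om))$ by hypothesis, I may write $\det(v)=\exp(2\pi i h)$ with $h\in C(\Om,\R)$; then $\det(v^*\tilde\bt(v))=\exp(2\pi i(\tilde\bt(h)-h))$ and $R_\mu([v])=\int_\Om(\tilde\bt(h)-h)\,d\mu$. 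As $\mu$ is $\bt$-invariant, $\int_\Om\tilde\bt(h)\,d\mu=\int_\Om h\circ\bt^{-1}\,d\mu=\int_\Om h\,d\mu$, whence $R_\mu([v])=0$ for every invariant $\mu$. Therefore $\rho_B(x)=\rho_B(\imath_{*0}(y))$ for a suitable $y$, which proves the first assertion.

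For the "Consequently" part I would apply the equality just proved to the system $(X\times\Om,\af\times\phi)$, that is, with $\Om$ replaced by $X\times\Om$, $\bt$ by $\af\times\phi$, and $B$ by $A$. The required hypothesis $U(C(X\times\Om))=U_0(C(X\times\Om))$ holds: since $X$ is totally disconnected and $H^1(\Om,\Z)=\{0\}$ gives $U(C(\Om))=U_0(C(\Om))$, a Künneth/homotopy argument yields $H^1(X\times\Om,\Z)=\{0\}$, equivalently $[X\times\Om,\T]=0$. This gives $\rho_A(K_0(A))=\rho_A(\imath_{*0}(K_0(C(X\times\Om))))$, and it remains to identify the right-hand side with $\rho_A(K^0(X,\af,\Z))$. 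Here I would use connectedness of $\Om$: for $f\in C(X,K_0(C(\Om)))=K_0(C(X\times\Om))$ the rank function $x\mapsto\operatorname{rank}(f(x))$ is a locally constant element $r\in C(X,\Z)$, and since the tracial states of $C(\Om)$ see a $K_0$-class only through its rank, $\rho_A(\imath_{*0}(f))(\tau_\nu)=\int_X r\,d\nu_X$ for every invariant $\nu$ (with $\nu_X$ its image on $X$). As $f\mapsto r$ maps onto $C(X,\Z)$ and $\rho_A$ factors through it, the image coincides with $\rho_A(K^0(X,\af,\Z))$, completing the proof.
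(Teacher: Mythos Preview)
Your approach is correct and follows essentially the same route as the paper. For the first assertion the paper simply invokes Exel's rotation-number theory (Chapter~VI of \cite{Exel1}; see also 10.10.5 of \cite{Bl}), which is precisely the Pimsner--Voiculescu boundary/de~la~Harpe--Skandalis determinant identification you sketch; the step you flag as the main obstacle is exactly the content of that cited result. For the ``Consequently'' part the paper likewise observes $H^1(X\times\Om,\Z)=\{0\}$ (since $X$ is zero-dimensional), applies the first part to $X\times\Om$, and then uses (\ref{AKT-3}), whose proof in \ref{AKT} is your rank-function argument from connectedness of $\Om$.
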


\begin{proof}
The first part follows from Chapter VI of \cite{Exel1} (see also 10.10.5 of \cite{Bl}). For the second part, we note that $X$ has zero dimension
so $H^1(X\times \Om,\Z)=\{0\}.$ Therefore, by (\ref{AKT-3}),
\beq\label{Ltrace-2}
\rho_A(K_0(A))=\rho_A(K^0(X,(\af\times \phi)^{-1}, K_0(C(\Om))))=\rho_A(K^0(X, \af, \Z)).
\eneq
\end{proof}

\begin{df}\label{DfA}
{\rm
Denote by ${\cal A}$ the class of unital ${\cal Z}$-stable separable simple amenable \CA s which satisfy the
Universal Coefficient Theorem and which have  rational tracial rank at most one, i.e.,
 $A\otimes U$ has tracial rank at most one, where $U$ is any infinite dimensional
UHF-algebra (see \cite{Lninv}). This class of \CA s
is classifiable in the sense that, if $A, \, B\in {\cal A},$ then $A\cong B$ if and only if
they have the isomorphic Elliott invariant (\cite{Lninv}). This class contains all unital simple AH-algebras
with no dimension growth  as well as the Jiang-Su algebra.  A description of the range
of the invariant is presented in \cite{LN2}.

}
\end{df}

\section{The \SCA\, $A_x$}

\begin{df}\label{DAx}
{\rm
Let $(X,\af)$ be a Cantor minimal system and $\Om$ be a connected finite dimensional
compact metric space.
Fix $x\in X.$ Denote by $A_x$ the \SCA\, of $A=C(X\times \Omega)\times_{\af\times \phi} \Z$ generated by $C(X\times \Om)$ and $uC_0(X\setminus\{x\}\times \Om),$ where
$u$ is the unitary in $A$ which implements the action $\af\times \phi.$
}
\end{df}

\begin{thm}\label{TAx}
The \CA\, $A_x$ is isomorphic to a unital simple AH-algebra with slow dimension growth.
%Moreover, $T(A_x)=T(A).$
\end{thm}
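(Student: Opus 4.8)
The plan is to realise $A_x$ as an inductive limit of homogeneous building blocks extracted from the Kakutani--Rokhlin tower structure of the Cantor minimal system $(X,\af)$, following the point case of Putnam and the product case of \cite{LM2}. First I would fix a decreasing sequence of clopen neighbourhoods $X=Y_0\supset Y_1\supset Y_2\supset\cdots$ of $x$ with $\bigcap_n Y_n=\{x\}$. For each $n$ the first return map to $Y_n$ yields a Kakutani--Rokhlin partition
\[
X=\bigsqcup_{j=1}^{k_n}\bigsqcup_{i=0}^{h(n,j)-1}\af^i(Z_{n,j}),
\]
where $Y_n=\bigsqcup_j Z_{n,j}$ is the decomposition of the base according to first return time $h(n,j)$, each $Z_{n,j}$ is clopen, and $\af^{h(n,j)}(Z_{n,j})\subset Y_n$. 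Because $Y_n$ shrinks to the single point $x$, the return times satisfy $\min_j h(n,j)\to\infty$ as $n\to\infty$.

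Next I would attach to the $n$-th partition a subalgebra $A_n\subset A_x$. Using the partition together with the implementing unitary $u$ to pass between successive levels of a tower, one checks that the elements of $C(X\times\Om)$ supported on the tower over $Z_{n,j}$, together with the partial isometries $u\,\chi_{\af^i(Z_{n,j})\times\Om}$, generate a copy of $M_{h(n,j)}\big(C(Z_{n,j}\times\Om)\big)$; the crucial point is that the ``top-to-bottom'' corner closing each tower is deliberately absent, since $uC_0((X\setminus\{x\})\times\Om)$ carries no component returning over $x$. Hence
\[
A_n\cong\bigoplus_{j=1}^{k_n} M_{h(n,j)}\big(C(Z_{n,j}\times\Om)\big).
\]
Since each $Z_{n,j}$ is a clopen subset of the Cantor set it is zero-dimensional, so $\dim(Z_{n,j}\times\Om)=\dim\Om<\infty$ is fixed, and each $A_n$ is a unital homogeneous (hence type I) \CA\ with finite-dimensional spectrum.

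Because the partition associated with $Y_{n+1}$ refines that of $Y_n$, the subalgebras are nested, $A_n\subset A_{n+1}$, with unital connecting embeddings, and a standard approximation argument shows that $C(X\times\Om)$ and every generator $u\,g$ with $g\in C_0((X\setminus\{x\})\times\Om)$ lie in $\overline{\bigcup_n A_n}$. Thus $A_x=\varinjlim A_n$ is a unital AH-algebra, and slow dimension growth is immediate: the spectra have uniformly bounded dimension $\dim\Om$ while the matrix sizes $h(n,j)\to\infty$, so the relevant ratio tends to $0$. Finally, simplicity of $A_x$ follows from the orbit-breaking argument: since $(X,\af)$ is minimal and infinite it is aperiodic, so $\{x\}\times\Om$ has empty interior and meets each $\af\times\phi$-orbit at most once; minimality of the product system $\af\times\phi$ then forces every nonzero ideal of $A_x$ to exhaust it (cf. \cite{LM2}).

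The step I expect to be the main obstacle is the precise identification of each $A_n$ as the stated homogeneous matrix algebra --- in particular verifying that the generators produce a full set of matrix units over $Z_{n,j}\times\Om$ and that the single removed point $x$ exactly accounts for the broken top corner, so that no spurious relations survive and the limit is genuinely AH rather than a crossed-product-type algebra. Controlling how the $\phi$-twisting enters the identification of the tower levels $\af^i(Z_{n,j})\times\Om$ with the base $Z_{n,j}\times\Om$ is the technical heart, but it is purely topological and does not affect the isomorphism class of the building block.
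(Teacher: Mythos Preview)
Your proposal is correct and follows essentially the same route as the paper. Both arguments realise $A_x$ as $\overline{\bigcup_n A_n}$ for a nested sequence of subalgebras obtained from Kakutani--Rokhlin partitions over clopen neighbourhoods shrinking to $x$, identify each $A_n$ with $\bigoplus_j M_{h(n,j)}\big(C(Z_{n,j}\times\Om)\big)$ via the matrix units $u^{i-j}\chi_{\af^j(Z_{n,j})\times\Om}$, and use that the spectra have dimension bounded by $\dim\Om$; simplicity in both cases is handled by the orbit-breaking argument as in \cite{LM2}. The only packaging difference is that the paper phrases the outcome as ``$A_x$ is locally AH with bounded dimension'' and then invokes Theorem~1.1 of \cite{Lnloc} to conclude that $A_x$ is a simple AH-algebra with no dimension growth, whereas you argue slow dimension growth directly from the fact that $\min_j h(n,j)\to\infty$ while $\dim(Z_{n,j}\times\Om)\le\dim\Om$ is fixed; the substance is the same.
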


\begin{proof}
Part of the proof is known. As in the proof of the part (5) of Proposition 3.3 of \cite{LM2}, using
groupoid \CA s, $A_x$ is simple,  since we assume that $\af\times \phi$
is  minimal.

We now assume that the dimension of $\Om$ is $d.$
To show $A_x$ is locally AH, we use an argument of Ian Putnam (3.1 of \cite{Pu1}) and proceed as in the proof of Proposition 3.3 of \cite{LM2}.
Let
$$
{\cal P}_n=\{X(n,v, k): v\in V_n, k=1,2,...,k(v)\}
$$
be a sequence of Kakutani-Rohlin partitions which gives a Bratteli-Vershik model
for $(X, \sigma)$ (see Theorem 4.2 of \cite{HPS} or Section 2 of \cite{M3}). We also assume the roof sets
$$
R({\cal P}_n)=\bigcup_{v\in V} X(n,v,h_n(v))
$$
shrink to a single point $x.$ Let $A_n$ be the \CA\, generated by
$C(X\times\Om)$ and $uC({\cal R}({\cal P}_n)^c\times \Om).$
Since $R({\cal P}_{n+1})\subset R({\cal P}_n),$ $A_n\subset A_{n+1},$
$n=1,2,....$
It is easy see that $A_x$ is the norm closure of the union of all $A_n$' s.
By using a similar argument to Lemma 3.1 of \cite{Pu1}, it can be shown that
$A_n$ is isomorphic to
\beq\label{TAx-2}
\bigoplus_{v\in {\cal V}_n} M_{h_n(v)}\otimes C(X(n,v,h_n(v)))\otimes C(\Om)\cong
\bigoplus_{v\in {\cal V}_n}M_{h_n(v)}(C(Y_{n,v})),
\eneq
where $Y_{n,v}$ is a compact metric space of covering dimension $d.$
In fact, let
$$
p_v=\sum_{k=1}^{h_n(v)}\chi_{X(n,v,k)}
$$
for each $v\in V.$  It is easy to check that $p_vu(1-\chi_{R({\cal P}_n)})=u(1-\chi_{R({\cal P}_n)})p_v.$ It follows that $p_v$ is central in $A_n.$  Put $e_{i,j}(n,v)=u^{i-j}\chi_{X(n,v,j)\times \Om}.$ One  
that, for each $n$ and $v,$  $\{e_{i,j}(n,v)\}_{i,j}$ ($1\le i,\,j\le h_n(v)$)
form matrix units in $A_n$ with
$$
\sum_{i=1}^{h_n(v)}  e_{i,i}=\sum_{i=1}^{h(v)}\chi_{X(n,v,i)}=p_{n,v}.
$$
Note that
\beq\label{TAx-4}
\chi_{X(n,v,h_n(v))}A_n\chi_{X(n,v,h_n(v))}=C(X(n,v,h_n(v)))\times \Om).
\eneq
One checks that the \SCA\, generated by $\{e_{i,j}(n,v)\}$ and $\chi_{X(n,v,h_n(v))}A_n\chi_{X(n,v,h_n(v))}$ is isomorphic to $M_{h_n(v)}(C(X(n,v,h_n(v))\times \Om)).$
Therefore (\ref{TAx-2}) holds.
It follows from Theorem 1.1 of \cite{Lnloc} that  $A_x$ is  a unital simple AH-algebra with no dimension growth.

%To show $T(A_x)=T(A),$ it suffices to show that every tracial state
%$\tau\in T(A_x)$ can be extended to tracial state of $A.$
%The proof can be  proceed exactly the same way as that   of part (4) of Proposition 3.3 of \cite{LM2}. We omit the details.
\end{proof}

\begin{prop}\label{AxKT}
Let $(X, \af)$ be a Cantor minimal system, let $x\in X,$ let $\Om$ be a connected compact metric space and $\phi: X\to {\rm Homeo}(\Om)$ be a continuous map.
%Suppose that $(\phi_x)_{*0}={\rm id}_{K_0(\Om)},$ $i=0,1$ for each $x\in X.$
Then, for $i=0,1,$ 
\beq\label{AxKT-1}
%&&\hspace{-0.4in}K_0(A_x)\cong K^0(X, \af\times \phi, K_0(C(\Om)))\andeqn\\\label{AxKT-1+}
K_i(A_x)\cong K_i(C(X\times \Om))/\{f-f\circ(\af\times \phi)_{*i}^{-1}: f(x)=0, f\in C(X,K_i(C(\Om)))\}.
\eneq
Moreover, the imbedding $\imath: A_x\to A$
gives
an affine homeomorphism $\imath_{\sharp}: T(A)\to T(A_x)$ and
gives an order isomorphism $\rho_{A_x}(K_0(A_x))=\rho_A(K^0(X,\af,\Z)).$
Moreover, if $q\in C(X)$ is a projection, then
$uqu^*$ and $q$ are equivalent in $A_x.$
\end{prop}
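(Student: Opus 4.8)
The plan is to establish the $K$-theory formula (\ref{AxKT-1}) first and then read off the remaining three assertions from it. Throughout write $S=\af\times\phi$, so that $\widetilde S(a)=uau^*$ for $a\in C(X\times\Om)$, and abbreviate $\chi_{E\times\Om}$ by $\chi_E$ for a clopen $E\subseteq X$.

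For (\ref{AxKT-1}) I would use the inductive limit $A_x=\overline{\bigcup_n A_n}$ of Theorem \ref{TAx}, so $K_i(A_x)=\varinjlim_n K_i(A_n)$, and show that the map $\imath_*\colon C(X,K_i(C(\Om)))=K_i(C(X\times\Om))\to K_i(A_x)$ induced by inclusion is the quotient map onto the right-hand side. Surjectivity is immediate: in the identification (\ref{TAx-2}) the group $K_i(A_n)$ is generated by classes carried by the corners $\chi_{X(n,v,h_n(v))\times\Om}A_n\chi_{X(n,v,h_n(v))\times\Om}=C(X(n,v,h_n(v))\times\Om)\subseteq C(X\times\Om)$, so every class in the limit lies in the image of $\imath_*$. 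For $\{f-f\circ S^{-1}_{*i}\colon f(x)=0\}\subseteq\ker\imath_*$, observe that if $f(x)=0$ then $f$ is supported on a clopen $F\subseteq X\setminus\{x\}$; since $u\chi_F\in A_x$, conjugating a representative supported on $F\times\Om$ by the partial isometry $u\chi_F$ (entrywise) yields $\imath_*(f)=\imath_*(f\circ S^{-1}_{*i})$ for $i=0,1$, using $\widetilde S(p)=upu^*$. The reverse inclusion $\ker\imath_*\subseteq\{f-f\circ S^{-1}_{*i}\colon f(x)=0\}$ is the technical heart and the step I expect to be the main obstacle: one must check that the matrix-unit relations $e_{i,j}(n,v)=u^{i-j}\chi_{X(n,v,j)\times\Om}$ impose within each $A_n$ exactly the tower identifications $\chi_{X(n,v,j)}\sim\chi_{X(n,v,k)}$ and nothing further, and that because the roof sets shrink to $x$ these relations generate in the limit precisely the displayed subgroup. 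I would carry this out by the Bratteli--Vershik bookkeeping already used in \cite{LM2} and \cite{Pu1}.

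Granting (\ref{AxKT-1}), the trace statement is formal. Restriction gives an affine, weak-$*$ continuous map $\imath_\sharp\colon T(A)\to T(A_x)$, and both trace spaces are parametrized by $S$-invariant Borel probability measures on $X\times\Om$: a trace of $A$ is determined by, and restricts to, such a measure on $C(X\times\Om)\subseteq A_x$, while any $\sigma\in T(A_x)$ restricts to a measure $\mu$ on $C(X\times\Om)$ whose $S$-invariance is forced by $\sigma(\chi_F)=\sigma((u\chi_F)^*(u\chi_F))=\sigma((u\chi_F)(u\chi_F)^*)=\sigma(\chi_{S(F)})$ for clopen $F\subseteq X\setminus\{x\}$, such sets sufficing to pin down $\mu$. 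Since every invariant measure extends uniquely to a trace of $A$, and the canonical conditional expectation onto $C(X\times\Om)$ (which preserves $A_x$) shows this extension agrees with $\sigma$ on $A_x$, the map $\imath_\sharp$ is a continuous affine bijection of compact convex sets, hence an affine homeomorphism. The equality $\rho_{A_x}(K_0(A_x))=\rho_A(K^0(X,\af,\Z))$ of ordered groups then follows by pairing (\ref{AxKT-1}) with traces: as $\Om$ is connected, every trace of $C(\Om)$ agrees with the rank on $K_0(C(\Om))$, so $\tau(\imath_*f)=\int_X\operatorname{rank}(f)\,d\mu$ depends only on $\operatorname{rank}\circ f\in C(X,\Z)$; this collapses $K_0(A_x)$ tracially onto $K^0(X,\af,\Z)$ and respects the cones.

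For the final assertion let $q=\chi_E$ with $E\subseteq X$ clopen. Since $S(E\times\Om)=\af(E)\times\Om$ we have $uqu^*=\chi_{\af(E)}\in C(X\times\Om)\subseteq A_x$. If $x\notin E$ then $u\chi_E\in A_x$ is a partial isometry with $(u\chi_E)^*(u\chi_E)=q$ and $(u\chi_E)(u\chi_E)^*=uqu^*$, giving the equivalence directly. If $x\in E$ I would argue through $K$-theory: under (\ref{AxKT-1}) the classes of $q$ and $uqu^*$ are $[\chi_E[1_\Om]]$ and $[(\chi_E\circ\af^{-1})[1_\Om]]=[(\chi_E[1_\Om])\circ S^{-1}_{*0}]$, and the constant function $w\equiv[1_\Om]$ is $S^{-1}_{*0}$-invariant (because each $(\phi_y)_{*0}$ fixes $[1_{C(\Om)}]$) with $w(x)=[1_\Om]=(\chi_E[1_\Om])(x)$; hence $h:=\chi_E[1_\Om]-w$ satisfies $h(x)=0$ and $h-h\circ S^{-1}_{*0}=\chi_E[1_\Om]-(\chi_E[1_\Om])\circ S^{-1}_{*0}$, showing $[q]=[uqu^*]$ in $K_0(A_x)$. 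Finally, $A_x$ is a simple unital AH-algebra with slow dimension growth by Theorem \ref{TAx} and hence has stable rank one, so cancellation upgrades $[q]=[uqu^*]$ to Murray--von Neumann equivalence $q\sim uqu^*$ in $A_x$.
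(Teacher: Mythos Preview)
Your proposal is correct and follows essentially the same route as the paper: the $K$-theory formula via the inductive limit $A_x=\overline{\bigcup_n A_n}$ and the tower description (\ref{TAx-2}), the projection equivalence by splitting into the cases $x\notin E$ (direct, via $u\chi_E$) and $x\in E$ (via the $K_0$ identity obtained by subtracting the constant function $[1_\Om]$), and the tracial image identification by collapsing to rank. Two small differences are worth noting. First, you make explicit the step the paper leaves implicit, namely that $[q]=[uqu^*]$ in $K_0(A_x)$ upgrades to Murray--von Neumann equivalence because $A_x$, being simple AH with slow dimension growth, has stable rank one; the paper simply writes ``it suffices to show $[q]-[q\circ(\af\times\phi)^{-1}]=0$''. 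Second, for $T(A)\cong T(A_x)$ the paper first proves the projection equivalence $q\sim uqu^*$ and then uses it (choosing $U\ni x$ with $U,\af^{-1}(U),\dots,\af^{-n}(U)$ disjoint and equivalent in $A_x$, so $\tau(\chi_U)\le 1/n$) together with the argument of \cite[Prop.~3.3(4)]{LM2} to extend traces; you instead argue directly from $S$-invariance of the restricted state and the conditional expectation. Your version is fine, but the phrase ``such sets sufficing to pin down $\mu$'' hides exactly the step the paper isolates: one must first see $\mu(\{x\}\times\Om)=0$, which follows from the disjoint-translates argument and the partial invariance you have already established on clopen sets off $x$.
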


\begin{proof}
Let $A_n$ be in the proof of \ref{TAx}, i.e., $A_n$ is generated by
$C(X\times \Om)$ and $uC(R({\cal P})^c\times \Om).$ There is a natural
\hm\, from $K_i(C(X\times \Om))$ to $K_i(A_n)$ (by the embedding
of $C(X\times \Om)$). By (\ref{TAx-2}), this \hm\, is surjective.
 For $i=0,1,$ the kernel is
$$
\{f-f(\af\times \phi)_{*1}^{-1}: f(y)=0 \tforal y\in R({\cal P}_n),\, f\in K_i(C(X\times \Om))\}.
$$
Thus (\ref{AxKT-1}) holds.

We now prove that, for any projection $q\in C(X),$ $uqu^*$ and $q$ are equivalent
in $A_x.$  If $q(x)=0,$ then $uq\in A_x.$ It follows that $qu^*\in A_x.$
Therefore $uqu^*$ and $q$ are equivalent in $A_x.$
Suppose that $q(x)\not=0.$
It suffices to show that
\beq\label{AxKT-3}
[q]-[q\circ (\af\times \phi)^{-1}]=0\,\,\,{\rm in}\,\,\, K_0(A_x).
\eneq
Let
$f(y)=[q(y)]-[1_{C(X\times \Om)}(y)]$ for all $y\in X.$ Note that
$f\in C(X, K_0(C(\Om)))$ with $f(x)=0.$
Note that $ 1_{C(X\times \Om)}-1_{C(X\times \Om)}\circ (\af\times\phi)^{-1}=0.$
It follows
that
\beq\label{AxKT-4}
[q]-[q\circ (\af\times \phi)^{-1}]=
f-f\circ (\af\times \phi)^{-1}
+[1_{C(X\times \Om)}]-[1_{C(X\times \Om)}\circ (\af\times\phi)^{-1}].\\
\eneq
From (\ref{AxKT-1}), this implies that $[q]-[q\circ (\af\times \phi)^{-1}]=0$
in $K_0(A_x).$ This proves that $q$ and $uqu^*$ are equivalent in $A_x.$

To show $T(A_x)=T(A),$ it suffices to show that every tracial state
$\tau\in T(A_x)$ can be extended to tracial state of $A.$
Let $U$ be a clopen neighborhood of $x$ such that
$U, \af^{-1}(U), \cdots, \af^{-n}(U)$ are mutually disjoint. Let $p=\chi_{U\times \Om}.$ We have shown $p,$ $upu^*,$ $u^npu^{*n}$ are mutually equivalent in $A_x$ and mutually orthogonal.
The proof that $T(A_x)=T(A)$ can then  proceed exactly the same way as that   of part (4) of Proposition 3.3 of \cite{LM2}.

To show
that $\rho_{A_x}(K_0(A_x))=\rho_A(K^0(X, \af, \Z)),$ we first note that we have just proved that
the map sending $[\chi_O]$ to $[\chi_O]$ (for clopen sets $O\subset X$)  is an embedding from
$K^0(X, \af, \Z)$ into $K_0(A_x).$
  Since $\Om$ is connected, the subgroup $K^0(X, \af, \Z)$
injectively maps into $K^0(X, \af\times \phi, K_0(C(\Om)))\subset K_0(A).$

\end{proof}

\section{Tracial rank}

\begin{df}\label{DLmeas}
{\rm
%Let $A$ be a unital simple \CA\, and let $T(A)$ be the tracial state space.
Let $T$ be a compact Choquet simplex. Suppose that $Y$ is a compact metric space and $L: C(Y)_{s.a.}\to \Aff(T)$ is an affine \hm. We say $L$ is unital and strictly positive if $L(1_{C(Y)})(\tau)=1$ for all $\tau\in T,$ and
$L(f)(\tau)>0$ for all $\tau\in T$ if $f\not=0$ and $f\ge 0.$

Suppose that $L: C(Y)_{s.a.}\to \Aff(T)$ is a strictly positive affine
\hm. Let $f\not=0$ and $f\ge 0.$
Then, since $T(A)$ is compact,
\beq\label{DLm-1}
\inf\{L(f)(\tau): \tau\in T(A)\}>0.
\eneq
For each open subset $O\subset Y,$ let
\beq\label{DLm-2}
d(O)=\inf_{\tau\in T}\{\sup\{ L(f)(\tau): 0\le f\le 1,\, {\rm supp}(f)\subset O\}\}.
\eneq
Then, for any non-empty open subset $O\subset Y,$
$d(O)>0.$
For each $a\in (0,1),$ let $\{x_1, x_2,..., x_m\}\subset Y$ be an $a/4$-dense subset.
Define
\beq\label{DLm-3}
D(a,i)=d(B(x_i,a/4)),\,\,\,i=1,2,...,m.
\eneq
Put
\beq\label{DLm-4}
\bigtriangledown_0(a)=\min\{D(a,i): i=1,2,...,m\}.
\eneq
For any $x\in Y,$ there exists $i$ such that $B(x,a)\supset B(x_i, a/4).$
Thus
\beq\label{DLm-5}
d(B(x,a))\ge (3/4)\bigtriangledown_0(a).
\eneq
Put $\bigtriangledown(a)=(3/4)\bigtriangledown_0(a)$ for all $a\in (0,1).$
Now let $A$ be a unital separable simple \CA\, with $T(A)=T$ and
let $\phi: C(Y)\to A$ be a unital monomorphism. Then
$\phi_\sharp: C(Y)_{s.a.}\to \Aff(T(A))$ defined by
$$
\phi_{\sharp}(f)(\tau)=\tau\circ \phi(f)\rforal f\in C(Y)_{s.a.}
$$
is a unital strictly positive affine \hm.
It is easy to check that, for any $1>\sigma>0,$
there is a finite subset ${\cal H}\subset C(Y)_{s.a.}$
and $\eta>0$ such that
\beq\label{DLm-6}
\mu_{\tau\circ \phi}(B(x,r))\ge \bigtriangledown(r)
\eneq
for all open ball with radius $r\ge \sigma,$ provided
that
\beq\label{DLm-7}
|\tau\circ \phi(g)-L(g)(\tau)|<\eta\rforal g\in {\cal H},
\eneq
where $\mu_{\tau\circ \phi}$ is the Borel probability measure induced by
the state $\tau\circ \phi.$
}
\end{df}

Let $B$ be a unital \CA\, with $T(B)\not=\emptyset.$ Note that, by \cite{KT}, there is a splitting short exact sequence:
$$
0\to \Aff(T(B))\to \cup_{n=1}^{\infty}U(M_n(B))/CU(M_n(B))\to K_1(B)\to 0.
$$
Fix a splitting map, we denote by $U_c(K_1(B))$ the image of $K_1(B)$ under the splitting map.  

\vspace{0.2in}

We will use the following uniqueness theorem:

\begin{thm}\label{TUni} {\rm (Theorem 5.9 of \cite{LnHom})}
Let $Y$ be a compact metric space and let
%$A$ be a unital simple
%\CA\, with $TR(A)\le 1.$
$T$ be a compact Choquet simplex.
Suppose that $L: C(Y)_{s.a.}\to \Aff(T(A))$ is a unital strictly positive affine \hm.
Let $\ep>0$ and ${\cal F}\subset C(Y)$ be a finite subset.
There exists  a finite subset ${\cal H}\subset C(Y)_{s.a.},$ a finite subset ${\cal P}\subset \underline{K}(C(Y)),$ a finite subset
${\cal U}\subset U_c(K_1(C(Y))),$ $\dt>0$ and $\eta>0$ satisfying the following:
Suppose that $\phi_1, \phi_2: C(Y)\to A$ are two unital monomorphisms
for some unital simple \CA\, $A$ of tracial rank at most one  with $T(A)=T$ such that
\beq\label{Tuni-1}
[\phi_1]|_{\cal P}&=&[\phi_2]|_{\cal P},\\
|\tau\circ \phi_i(g)-L(g)(\tau)|&<&\dt\tforal g\in {\cal H},\,\,i=1,2,\tand\\
{\rm dist}({\overline{\phi_1(v)\phi_2(v^*)}}, \overline{1_A})&<&\eta\tforal v\in {\cal U}.
\eneq
Then there is a unitary $w\in A$ such that
\beq\label{Tuni-2}
\|w^*\phi_1(f)w-\phi_2(f)\|<\ep\tforal f\in {\cal F}.
\eneq
\end{thm}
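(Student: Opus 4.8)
The plan is to deduce the statement from a corresponding uniqueness theorem for (approximately multiplicative) maps into the interval algebras $\bigoplus_j M_{r(j)}(C([0,1]))$, which serve as the building blocks for \CA s of tracial rank at most one, and then to transport the conclusion back to $A$ using the tracial approximation property of the hypothesis $\tr(A)\le 1.$ Thus I would organize the proof in three stages: a building-block uniqueness statement, a reduction of $A$ to such a block plus a small corner, and a patching argument. All the finite sets ${\cal H}, {\cal P}, {\cal U}$ and the tolerances $\dt, \eta$ are to be produced in the building-block stage and then shown to survive the reduction.

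First I would establish the building-block version. Given $L,$ $\ep$ and ${\cal F},$ I would produce ${\cal H}\subset C(Y)_{s.a.},$ ${\cal P}\subset \underline{K}(C(Y)),$ ${\cal U}\subset U_c(K_1(C(Y))),$ and $\dt,\eta>0$ so that any two unital \morp s $\psi_1,\psi_2\colon C(Y)\to B,$ with $B=\bigoplus_j M_{r(j)}(C([0,1])),$ that are sufficiently multiplicative on a generating set, whose spectral distributions are $\dt$-close (as measured through $L$ and the estimate of Definition \ref{DLmeas}), and whose relative determinant data agree to within $\eta$ on ${\cal U},$ admit a unitary $w_0\in B$ with $\|w_0^*\psi_1(f)w_0-\psi_2(f)\|<\ep/2$ for $f\in{\cal F}.$ Within each summand $K_1$ vanishes, so the matching is driven by the eigenvalue-function distributions along $[0,1]$; here the measure lower bound $\bigtriangledown(r)$ of Definition \ref{DLmeas} is exactly what guarantees that the induced measures $\mu_{\tau\circ\psi_i}$ do not concentrate in small balls, allowing the spectra to be paired and spatially matched. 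The role of the ${\cal U}$-condition is to kill the residual de La Harpe--Skandalis determinant (winding) of $\psi_1(v)\psi_2(v^*)$ for the $K_1$-generators $v,$ which the pure $K$-theory and trace data of $B$ cannot detect.

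Next I would invoke the definition of $\tr(A)\le 1.$ For a sufficiently large finite set ${\cal G}\supset \phi_1({\cal F})\cup\phi_2({\cal F})$ and a small $\ep_0>0,$ there is a projection $p\in A$ and a \SCA\ $B\cong\bigoplus_j M_{r(j)}(C([0,1]))$ with $1_B=p$ such that $\|pa-ap\|<\ep_0$ and $\dist(pap,B)<\ep_0$ for all $a\in{\cal G},$ while $\tau(1-p)<\ep_0$ uniformly in $\tau\in T(A).$ Cutting down, $p\phi_i(\cdot)p$ is a unital \morp\ that is $\ep_0$-multiplicative on ${\cal G};$ the trace hypothesis $|\tau\circ\phi_i(g)-L(g)(\tau)|<\dt$ together with $\tau(1-p)$ small forces the spectral distributions of these two corner maps to be close, and the ${\cal P}$- and ${\cal U}$-agreement of $\phi_1,\phi_2$ descends to the block. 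Applying the building-block uniqueness then yields $w_0\in B$ matching the two maps up to $\ep/2$ on $B.$

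Finally I would treat the corner $(1-p)A(1-p).$ Since $\tau(1-p)$ is uniformly small the corner maps $(1-p)\phi_i(\cdot)(1-p)$ are negligible in trace, but they must still be intertwined by a unitary $w_1\in(1-p)A(1-p);$ this is where the hypotheses on $\underline{K}$ over ${\cal P}$ and on $U/CU$ over ${\cal U}$ do the essential work, ensuring that the relative $K$-theory and the rotation (determinant) data of the pair vanish, so that a Basic Homotopy Lemma together with a stable-uniqueness argument (valid because $(1-p)A(1-p)$ again has tracial rank at most one) produces $w_1$ intertwining the two corner maps to within $\ep.$ Setting $w=w_0\oplus w_1$ under the identification $1_A=p+(1-p)$ gives the desired unitary. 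I expect the main obstacle to be precisely the control of this determinant/rotation obstruction on the small corner: the finite sets ${\cal P}$ and ${\cal U}$ and the tolerances must be arranged so that, after the matching on $B,$ the leftover unitary in the corner is connected to $1_A$ within a controlled length, and making all of this uniform over every admissible simple $A$ with $\tr(A)\le 1$ is what requires the full strength of the tracial-rank-one machinery.
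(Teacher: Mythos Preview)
The paper does not prove this theorem: its entire proof is the sentence ``This follows immediately from Theorem 5.9 of \cite{LnHom} and the discussion above in \ref{DLmeas}.'' The statement is a restatement of a uniqueness theorem from \cite{LnHom}, and the only work done in the present paper is the observation in \ref{DLmeas} that a unital strictly positive affine map $L$ automatically produces a uniform lower bound $\bigtriangledown(r)$ for the measures $\mu_{\tau\circ\phi}$ of open balls, which is the form in which the tracial hypothesis of Theorem 5.9 of \cite{LnHom} is stated.

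Your proposal instead sketches a proof of the cited theorem from scratch. The three-stage architecture you describe (building-block uniqueness in interval algebras, tracial approximation of $A$ by such a block plus a small corner, patching) is indeed the general shape of how such uniqueness theorems are established, so the outline is not off the mark. But as written there is a gap in the third stage: you handle the corner $(1-p)A(1-p)$ by invoking ``a Basic Homotopy Lemma together with a stable-uniqueness argument (valid because $(1-p)A(1-p)$ again has tracial rank at most one).'' That is essentially the theorem you are trying to prove, so the argument is circular unless you supply some independent leverage on the corner --- for instance, by making $\tau(1-p)$ so small that the two corner maps can be absorbed into the main block via an existence theorem, rather than matched by a second uniqueness step. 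In the actual proofs in \cite{LnHom} this is exactly where the serious work lies, and it is not a short argument. If your goal is only to justify the statement as it is used here, the correct move is to cite \cite{LnHom} and point to \ref{DLmeas}; if you intend an independent proof, be aware that completing your sketch amounts to reproducing a substantial part of that paper.
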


\begin{proof}
This follows immediately from Theorem 5.9 of \cite{LnHom} and the discussion above in \ref{DLmeas}.

\end{proof}

The following is  well-known.

\begin{lem}\label{Ldet}
Let $Y$ be a compact metric space such that $U(C(Y))=U_0(C(Y)).$
Then, for any $z\in K_1(C(Y)),$ there is  an integer $m\ge 1$ and a unitary
$v\in M_m(C(Y))$ such that
$$
[v]=z\andeqn {\rm det}(v)(y)=1\tforal y\in Y.
$$
\end{lem}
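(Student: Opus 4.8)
The plan is to start from an arbitrary unitary representative of $z$ and to correct its determinant by a scalar factor that is homotopically trivial, so that the $K_1$ class is preserved. First I would choose an integer $m\ge 1$ and a unitary $u\in M_m(C(Y))$ with $[u]=z$ in $K_1(C(Y))$; such a representative exists because $K_1(C(Y))$ is the inductive limit of the groups $U(M_m(C(Y)))/U_0(M_m(C(Y)))$, so every class is realized by a unitary in some matrix algebra. The pointwise determinant ${\rm det}(u)$ is then a continuous function $Y\to\T$, i.e.\ an element of $U(C(Y))$, and the whole problem reduces to removing this determinant without disturbing the class in $K_1$.

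The key step invokes the hypothesis. Since $U(C(Y))=U_0(C(Y))$, the unitary ${\rm det}(u)$ lies in $U_0(C(Y))$, hence is homotopic to the constant function $1$. Lifting this null-homotopic map ${\rm det}(u)\colon Y\to \T$ through the covering map $\R\to \T$, $t\mapsto e^{2\pi i t}$, produces a self-adjoint $h\in C(Y)$ with ${\rm det}(u)=\exp(2\pi i h)$. I would then set
$$
w={\rm diag}(\exp(-2\pi i h),1,\dots,1)\in M_m(C(Y)) \andeqn v=wu.
$$
By multiplicativity of the determinant, ${\rm det}(v)(y)={\rm det}(w)(y)\,{\rm det}(u)(y)=e^{-2\pi i h(y)}e^{2\pi i h(y)}=1$ for all $y\in Y$, which is the required normalization.

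It remains only to verify $[v]=z$. The path $t\mapsto {\rm diag}(\exp(-2\pi i t h),1,\dots,1)$ joins $1_{M_m}$ to $w$ inside $U(M_m(C(Y)))$, so $w\in U_0(M_m(C(Y)))$ and $[w]=0$ in $K_1(C(Y))$; hence $[v]=[w]+[u]=z$. I expect the only genuine content to be the extraction of the logarithm $h$ in the middle step: this is precisely where the hypothesis $U(C(Y))=U_0(C(Y))$ (equivalently, the vanishing of the obstruction in $H^1(Y,\Z)$ to lifting $\T$-valued functions to $\R$) is used, and once $h$ is in hand the remaining computations are purely formal.
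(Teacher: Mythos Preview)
Your proof is correct and follows essentially the same approach as the paper: pick a representative, observe that its determinant lies in $U_0(C(Y))$ by hypothesis, and multiply by the diagonal matrix with the inverse of that determinant in the top corner. The only cosmetic difference is that the paper skips the logarithm and simply uses that ${\rm det}(u)^*\in U_0(C(Y))$ directly implies ${\rm diag}({\rm det}(u)^*,1,\dots,1)\in U_0(M_m(C(Y)))$, whereas you pass through $\exp(-2\pi i h)$; the content is identical.
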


\begin{proof}
There exists an integer $m\ge 1$ and a unitary $w\in M_m(C(Y))$ such that
$[w]=z.$
Put
\beq\label{Ldet-1}
w_{00}(y)={\rm det}(w)(y)\tforal y\in Y.
\eneq
Then $w_{00}\in U(C(Y))=U_0(C(Y)).$
It follows that
\beq\label{Ldet-2}
w_0(y)=\begin{pmatrix} w_{00}^*(y) &0&\cdots & 0\\
                                       0 & 1 &\cdots &0\\
                                       && \ddots  & &\\
                                         &&&1\\
                                       \end{pmatrix}\in U_0(M_m(C(Y))).
                                      \eneq
Define $v=w_0w.$ Then $[v]=[w]=z$ and
\beq\label{Ldet-3}
{\rm det}(v)(y)={\rm det}(w_0)(y){\rm det}(w)(y)=1\rforal y\in Y.
\eneq
\end{proof}

\begin{lem}\label{MtechL}
Let $(X, \af)$ be a Cantor minimal system, let $\Omega$ be a  connected finite dimensional compact metric space with $U(C(\Om))=U_0(C(\Om))$
and let $\phi: X\to {\rm Homeo}(\Omega)$ be a continuous map. Put $A=C(X\times \Om)\rtimes_{\af\times\phi}\Z.$
Suppose that
there is $x\in X$ and an integer $k\ge 1$ such that
\beq\label{MtechL-1}
[\Phi_y]=[{\rm id}_{C(\Om)}]\,\,\,{\rm in}\,\,\, KL(C(\Om),C(\Om)),
\eneq
where $\Phi_y: C(\Om)\to C(\Om)$ is defined by
\beq\label{MtechL-1+}
\Phi_y(f)=f\circ \phi_{\af^{-k+1}(y)}^{-1}\circ \phi_{\af^{-k+2}(y)}^{-1}\circ\cdots \circ \phi_y^{-1} \tforal f\in C(\Om)
\eneq
 and for all
$y\in \{\af^{j}(x): j\in \Z\}$ and $\af^k$ is minimal.
%  and
%\beq\label{MtechL-2}
%U(C(\Om))=U_0(C(\Om)).
%\rho_A(K_0(A))=\rho_A(K^0(X, \af, K_0(C(\Om)))).
%\eneq
Let $x\in X.$  Then, for any $N\in \N,$ $\ep>0,$ and any finite subset ${\cal F}\subset C(X\times \Om),$ there is an integer $M>N,$  a clopen neighborhood $O$ of $x$ and partial isometry $w\in A_x$ which satisfy the following:

{\rm (1)} $\af^{-N}(O), \af^{-N+1}(O),..., O, \af(O),...,\af^M(O)$ are mutually disjoint and $\mu(O)<\ep/M$ for every $\af$-invariant probability measure $\mu;$

{\rm (2)} $w^*w=\chi_O$ and $ww^*=\chi_{\af^M(O)};$

{\rm (3)} $u^{*i}wu^i\in A_x$ for $i=0,1,...,N-1;$

{\rm (4)} $\|wf-fw\|<\ep$ for all $f\in {\cal F}.$

\end{lem}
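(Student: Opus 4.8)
The plan is to realise $w$ as a partial isometry that behaves like the $M$-fold forward shift $u^M\chi_{O\times\Om}$ for a carefully chosen $M$, but which is corrected so as to lie in $A_x$ and to act almost trivially on the $\Om$-fibres. Two requirements pull against each other. On one hand $w$ must lie in $A_x$, and since $x\in O$ the direct forward step out of $O$ is the broken connection and so is unavailable. On the other hand $w$ must almost commute with $\mathcal F$, which, because $O$ and $\af^M(O)$ are disjoint, forces the net fibre action of $w$ to be close to the identity rather than to the return cocycle $\Phi^{(M)}_x$. Both difficulties are resolved using the hypothesis $[\Phi_y]=[\mathrm{id}]$ in $KL(C(\Om),C(\Om))$, the condition $U(C(\Om))=U_0(C(\Om))$, and the fact that $A_x$ is a unital simple AH-algebra (Theorem \ref{TAx}), so that the uniqueness theorem \ref{TUni} is available.

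First I would fix the combinatorial data. Since $\af^k$ is minimal, the orbit $\{\af^{mk}(x):m\in\N\}$ is dense in $X$, so given $\dt>0$ there is $M=mk>N$ with $\dist(\af^M(x),x)<\dt$. Because $X$ is the Cantor set I can then pick a clopen neighbourhood $O$ of $x$ with $O\subset B(x,\dt)$ whose first return time under $\af$ exceeds $\lceil M/\ep\rceil$; this is consistent with $\dist(\af^M(x),x)<\dt$ precisely because $\af^M(x)$, though metrically close to $x$, may be excluded from the small clopen set $O$. Such a choice makes $\af^{-N}(O),\dots,O,\dots,\af^M(O)$ mutually disjoint, and summing the masses of the $\lceil M/\ep\rceil$ disjoint translates of $O$ gives $\mu(O)<\ep/M$ for every $\af$-invariant measure $\mu$, establishing (1). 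Two features are essential: keeping $M$ a multiple of $k$ guarantees that $\Phi^{(M)}_x=\Phi_{\af^{(m-1)k}(x)}\circ\cdots\circ\Phi_x$ is a composition of $m$ maps each trivial in $KL$, whence $[\Phi^{(M)}_x]=[\mathrm{id}_{C(\Om)}]$; and taking $\af^M(x)$ metrically close to $x$ will make the base part of (4) automatic. I also record that (3) is equivalent to $w\in B:=\bigcap_{i=0}^{N-1}A_{\af^i(x)}$, since $u^iA_xu^{*i}=A_{\af^i(x)}$.

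Next I would construct $w$ inside $B$. The same Kakutani--Rohlin analysis used in Theorem \ref{TAx}, applied to the finitely many orbit points $x,\af(x),\dots,\af^{N-1}(x)$, shows $B$ is again a unital simple AH-algebra of tracial rank at most one, and by Proposition \ref{AxKT} one has $[\chi_{O\times\Om}]=[\chi_{\af^M(O)\times\Om}]$ there. The content is to produce a partial isometry with these support and range projections whose fibre action is close to the identity. Restricting to the fibre over $O$, the forbidden shift $u^M\chi_{O\times\Om}$ and the plain inclusion determine two unital monomorphisms of $C(\Om)$ into a corner of $B$ which agree in $\underline K$ (by $KL$-triviality of $\Phi^{(M)}_x$) and agree on traces to within the prescribed tolerance (by Lemma \ref{Ltrace} and the near-return choice of $M$); the accompanying $U_c(K_1)$-condition is arranged from $U(C(\Om))=U_0(C(\Om))$ together with Lemma \ref{Ldet}. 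Theorem \ref{TUni} then supplies a unitary intertwining the two embeddings up to $\ep$, and incorporating it yields $w\in B$ satisfying (2) with net fibre action within $\ep$ of the identity. Condition (4) follows: each $f\in\mathcal F$ is within a fixed tolerance of $1\otimes f(x,\cdot)$ on both $O\times\Om$ and $\af^M(O)\times\Om$ (using $O\subset B(x,\dt)$ and $\dist(\af^M(x),x)<\dt$), while $w$ commutes with $1\otimes f(x,\cdot)$ up to $\ep$ because its fibre action is near the identity.

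The main obstacle is exactly the construction in the previous paragraph: producing a single partial isometry that simultaneously (i) lies in the multiply-broken algebra $B=\bigcap_{i=0}^{N-1}A_{\af^i(x)}$, giving (3), and (ii) has fibre action close to the identity, giving (4). A naive forward shift violates both at once, and it is only the $KL$-triviality of the return cocycle $\Phi^{(M)}_x$ --- together with the AH-structure of $A_x$ and its variants $A_{\af^i(x)}$ and the uniqueness theorem \ref{TUni} --- that makes the direct (forbidden) fibre connection approximately unitarily equivalent, inside $B$, to one realised by honest elements of $B$. Verifying that $B$ is a unital simple \CA\ of tracial rank at most one, so that \ref{TUni} genuinely applies, and checking the tracial hypothesis of that theorem, are the points that will require the most care.
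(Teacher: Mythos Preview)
Your outline captures the core ideas --- choosing $M$ a multiple of $k$ so that the $M$-step cocycle is $KL$-trivial, using minimality of $\af^k$ to place $\af^M(x)$ near $x$, and invoking Theorem~\ref{TUni} together with Lemma~\ref{Ldet} --- and these match the paper. One organisational difference: you propose to work in $B=\bigcap_{i=0}^{N-1}A_{\af^i(x)}$, which would require separately verifying that $B$ is simple AH of tracial rank at most one. The paper avoids this by working only in $A_x$ (already handled by Theorem~\ref{TAx}) and deducing (3) afterwards from the factorisation $u^{*i}wu^i=(u^{*i}q)\,w\,(pu^i)$, each factor lying in $A_x$ because $x\in O$ and the translates in (1) are disjoint.

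There is, however, a genuine gap in your construction of $w$. Your sentence ``Theorem~\ref{TUni} then supplies a unitary intertwining the two embeddings up to $\ep$, and incorporating it yields $w\in B$ satisfying (2)'' elides the hard step. Theorem~\ref{TUni} delivers a unitary in a \emph{single} corner --- say $w_1\in qA_xq$ intertwining $g\mapsto gq$ with $g\mapsto\Psi_x(g)q$ --- but you still need a partial isometry inside $A_x$ carrying $p=\chi_O$ to $q=\chi_{\af^M(O)}$. An arbitrary such bridge (coming merely from $[p]=[q]$ in $K_0$) has uncontrolled fibre action, so you cannot verify the $U_c(K_1)$-hypothesis of Theorem~\ref{TUni} for the resulting comparison. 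The paper supplies the bridge explicitly: it takes a unitary normaliser $w_2\in A_x\cap C^*(X,\af)$ of $C(X)$ with $w_2pw_2^*=q$, and then applies Theorem~\ref{TUni} a \emph{second} time in $pA_xp$ to correct for the fibre action introduced by $w_2$, obtaining $w_3$ and setting $w=w_2w_3$. The point of the second application is that the comparison map there is conjugation by the unitary $w_2^*w_1u^Mp$, which lies in $pAp$ (in the full crossed product, not in $pA_xp$); hence the relevant element $\psi_3(v_i)\psi_4(v_i^*)$ is a commutator in $M_K(pAp)$, and combined with $\rho_{A_x}(K_0(A_x))=\rho_A(K_0(A))$ (from Lemma~\ref{Ltrace} and Proposition~\ref{AxKT}) this disposes of the determinant condition. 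A single application of \ref{TUni}, without an explicit bridge whose fibre action you can control, does not yield the required $w$.
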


\begin{proof}
Since $U(C(\Om))$ is connected and $X$ has zero-dimensional,
one has
\beq\label{MtechLn-1}
U(C(X\times \Om))=U_0(C(X\times \Om)).
\eneq
It follows from \ref{Ltrace} that
\beq\label{MtechLn-2}
\rho_A(K_0(A))=\rho_A(K^0(X, \af, \Z)).
\eneq
Therefore, by \ref{AxKT},
% and \ref{TAx},
the embedding $\imath: A_x\to A$ gives
\beq\label{MtechLn-3}
\rho_{A_x}(K_0(A_x))=\rho_A(K_0(A)).
\eneq
Note $A_x$ is a unital simple AH-algebra with no dimension growth, by \ref{TAx}. So $TR(A_x)\le 1.$ It is generated by
$C(X\times \Om)$ and $uC((X\setminus \{x\})\times \Om).$
The $g\to 1_{X}\otimes g$ gives a unital embedding from
$C(\Om)$ into $C(X\times \Om).$ Therefore there is
a unital embedding $\imath: C(\Om)\to A_x.$
Let $L: C(\Om)\to \Aff(T(A_x))$ be the unital strictly positive
affine \hm\, induced by $\imath.$ Note that $L(g)=\tau(1\otimes g)$ for all $g\in C(\Om)_{s.a.}$ and 
for all $\tau\in T(A_x).$

Without  loss of generality, we may assume that
$$
{\cal F}=\{f\otimes 1_{\Om}, 1_X\otimes g: f\in {\cal F}_0\tand g\in{\cal F}_1\},
$$
where ${\cal F}_0\subset C(X)$ and ${\cal F}_1\subset C(\Om)$ are finite subsets. There exists a clopen neighborhood
$B_x$ of $x$ such that
\beq\label{MtechL-4}
|f(x)-f(y)|<\ep/8\rforal y\in B_x\andeqn \rforal f\in {\cal F}_0.
\eneq
Since $\af^k$ is minimal, we can find $n>N$ such that
\beq\label{MtechL-5}
\af^{kn}(x)\subset B_x.
\eneq
Choose a sufficiently small clopen neighborhood $O_x$ of $x$ such that
(1) holds and
\beq\label{MtechL-5}
\af^{kn}(y)\in B_x\rforal y\in O_x.
\eneq
Moreover, by choosing small $O_x,$ we may also require that
\beq\label{MtechL-6}
O_x\cup \af^{kn}(O_x)\subset B_x.
\eneq
Let $p_1=\chi_{O_x}$ and $q_1=\chi_{\af^{kn}(O_x)}.$ Put $M=kn.$
Then
\beq\label{MtechL-7}
u^{M} gp_1u^{*M}=u^Mgu^{*M}q_1\rforal g\in C(\Om).
\eneq
Define
%$$
%\Psi_x
%(f) =f\circ \phi_{\af^{-kn+1}(x)}^{-1}\circ \phi_{\af^{-kn+2}(x)}^{-1}\circ \cdots \circ \phi_{\af^{-1}(x)}^{-1}\circ \phi_x^{-1}.
%$$
%Then
\beq\label{MtechL-7+}
\Psi_x=\Phi_{\af^{(1-k)(n-1)}(x)}\circ \Phi_{\af^{(2-k)(n-2)}(x)}\circ \cdots
\circ \Phi_x.
\eneq
It follows that
\beq\label{MtechL-7+2}
[\Psi_x]=[{\rm id}_{C(\Om)}]\times [{\rm id}_{C(\Om)}]\times\cdots \times [{\rm id}_{C(\Om)}]=[{\rm id}_{C(\Om)}].
\eneq
Let ${\cal H}\subset C(\Om)_{s.a.}$ be a finite subset, ${\cal P}\subset \underline{K}(C(\Om))$ be a finite subset, ${\cal U}\subset
U_c(K_1(C(\Om)))$ be a finite subset, $\dt>0$ and $\eta>0$ be required by
\ref{TUni} for $\ep/4$ (in place of $\ep$) and ${\cal F}_1$ (in place of ${\cal F}$) associated with $L$ given above.
Let ${\cal V}=\{v_1, v_2,....,v_m\}\subset M_K(C(\Om))$ such that
${\cal U}\subset \{\overline{v_i}: 1\le i\le m\}$ and, by \ref{Ldet},
\beq\label{MtechLn-7}
{\rm det}(v_i)(y)=1\tforal y\in \Om,\,\,\, i=1,2,...,m.
\eneq

There is a finite subset ${\cal G}\subset C(\Om)$ and $\dt_1>0$ satisfying the following:
Suppose that $h_1, h_2: C(\Om)\to B$ are  two unital \hm s (for any
unital \CA\, $B$) such that
\beq\label{MtechLnn-1}
\|h_1(g)-h_2(g)\|<\dt_1\rforal g\in {\cal G}.
\eneq
Then
\beq\label{MtechLnn-2}
[h_1]|_{\cal P}=[h_2]|_{\cal P}.
\eneq
Let ${\cal G}_1={\cal G}\cup {\cal H}$ and let
\beq\label{MtechL-7+3}
U={\rm diag}(\overbrace{u,u,...,u}^K).
\eneq

There is a neighborhood $O$ of $x$ with $O\subset O_x$ such that
\beq\label{MtechL-8}
\|u^Mgpu^{*M}-\Psi_x(g)q\|<\min\{\dt,\dt_1, \ep/8\} \rforal g\in {\cal G}_1\andeqn\\\label{MtechL-8+}
\|U^Mv_iPU^{*M}-(\Psi_x\otimes {\rm id}_{M_K})(v_i)Q\|<\eta,\,\,\,1\le i\le m,
\eneq
where
$$
p=\chi_O,\,\,\,q=\chi_{\af^M(O)}, P=p\otimes {\rm id}_{M_K}\andeqn
Q=q\otimes {\rm id}_{M_K}.
$$
Define $\psi_{1,0}, \psi_{2,0}: C(\Om)\to C(X\times \Om)$ by $\psi_{1,0}(f)=f$ (as constant
along $X$) and
$\psi_{2,0}(f)=\Psi_x(f)$ for all $f\in C(\Om).$
It follows from (\ref{MtechL-7+2}) that
\beq\label{MtechL-9}
[\psi_{2,0}]=[\psi_{1,0}]\,\,\, KL(C(\Om), C(X\times \Om)).
\eneq
Define $\psi_1', \psi_2': C(\Om)\to q(C(X\times \Om))q$ by
$\psi_1'(g)=\psi_{1,0}(g)|_{\af^{M}(O)}$ and $\psi_2'(g)=\psi_1'\circ \psi_{2,0}(g)=\Psi_x(g)\cdot q$ for all $g\in C(\Om).$
It follows from (\ref{MtechL-9}) that
\beq\label{MtechL-10}
[\psi_1']=[\psi_2']\,\,\, {\rm in}\,\,\, KL(C(\Om), qC(X\times \Om)q).
\eneq
Denote by $j$ the embedding $qC(X\times \Om)q\to qA_xq,$
$\psi_i=j\circ \psi_i',$ $i=0,1.$
Then
\beq\label{MtechL-11}
[\psi_1]=[\psi_2]\,\,\, {\rm in}\,\,\, KL(C(\Om), qA_xq).
\eneq
It follows from (\ref{MtechL-8}) and \ref{TAx} that
\beq\label{MtechL-12}
|t\circ \psi_1(g)-t\circ\psi_2(g)|<\dt\rforal g\in {\cal H} \,\,\,{\rm and}\rforal t\in T(qA_xq).
\eneq
Note
\beq\label{MtechL-13-}
\tau\circ \psi_1(g)=\tau(q\otimes g)\rforal \tau\in T(A)\,\,\,{\rm and} \rforal g\in C(\Om)_{s.a.}.
\eneq
It follows that
\beq\label{MtechL-13}
L(g)(\tau)={\tau\circ \psi_1(g)\over{\tau(q)}}\rforal g\in C(\Om)_{s.a.}
\eneq
for all $\tau\in T(A).$

Note also that $\psi_1(v_i)\psi_2(v_i^*)\in M_K(C(X\times \Om))$ for
some integer $m\ge 1.$ By the virtue of Theorem 10 of Chapter VI of \cite{Exel1},
\beq\label{MtechLn8}
\Delta(\psi_1(v_i)\psi_2(v_i^*))&=&\Delta({\rm det}(\psi_1(v_i)\psi_2(v_i^*)))\\
&=& \Delta({\rm det}(\psi_1(v_i)){\rm det}(\psi_2(v_i^*)))
=\Delta(\psi_1({\rm det}(v_i))\psi_2({\rm det}(v_i^*)))\\
&=&\Delta(1_{qA_xq})\in \rho_{A_x}(K_0(qA_xq)).
\eneq
It follows that
\beq\label{MtechLn9}
{\rm dist}(\overline{\psi_1(v_i)\psi_2(v_i^*)}, \overline{1_{qA_xq}})=0.
\eneq
It follows from \ref{TUni} that there is a unitary $w_1\in qA_xq$ such that
\beq\label{MtechLn-10}
\|w_1\psi_2(g)w_1^*-\psi_1(g)\|<\ep/4\tforal g\in {\cal F}_1.
\eneq
There is a unitary normalizer $w_2\in A_x\cap C^*(X, \af)$ of $C(X)$ such that
$w_2pw_2^*=q.$ Note that $w_2$ has the form
$$
w_2 =\sum_{m\in \Z}u^m\chi_{\Gamma^{-1}(m)},
$$
where $\Gamma: X\to \Z$ is a continuous map.
Define $\psi_3, \psi_4, \psi_5: C(\Om)\to pA_xp$ by $\psi_3(g)=gp$ and
$\psi_4(g)=w_2^*w_1 u^{M} gpu^{*M}w_1^*w_2$ and
$\psi_5(g)=w_2^*w_1(g\circ \Psi_x)pw_1^*w_2$ for all $g\in C(\Om).$
As above, we compute that
\beq\label{MtechL-14}
[\psi_5]=[\psi_3]\,\,\, {\rm in}\,\,\, KL(C(\Om), pA_xp).
\eneq
By (\ref{MtechL-8}), the choice of ${\cal G}_1$ and $\dt_1,$
\beq\label{MtechL-15}
[\psi_5]|_{\cal P}=[\psi_4]|_{\cal P}.
\eneq
It follows from \ref{TAx} that
\beq\label{MtechL-16}
\tau\circ \psi_3(g)=\tau\circ \psi_4(g)\tforal g\in C(\Om)\andeqn\tforal \tau\in T(pA_xp).
\eneq
It is clear that
\beq\label{MtechL-17-}
\psi_3(v_i)\psi_4(v_i^*)\in CU(pAp).
\eneq
It follows that
\beq\label{MtechL-17}
{\tilde \Delta}_\tau (\overline{\psi_3(v_i)\psi_4(v_i^*)})\in \overline{\rho_A(K_0(A))}.
\eneq
Therefore, by (\ref{MtechLn-3}),
\beq\label{MtechL-18}
{\rm dist}(\overline{\psi_1(v_i)\psi_2(v^*)}, \overline{1_{qA_xq}})=0.
\eneq
By applying \ref{TUni} again, we obtain a unitary $w_3\in pA_xp$ such that
\beq\label{MtechL-19}
\|w_3gpw_3^*-\psi_4(g)\|<\ep/4\tforal g\in {\cal F}_1.
\eneq
Put $w=w_2w_3.$ Then $w\in A_x$ and
\beq\label{MtechL-20}
w^*w=pw_3^*w_2^*w_2w_3p=p=\chi_O\andeqn ww^*=w_2w_3w_3^*w_2^*=w_2pw_2^*=q=\chi_{\af^M(O)}.
\eneq
So (2) holds.
Moreover (see also (\ref{MtechL-8})),
\beq\label{MtechL-21}
\|wgpw^*-gq\| &\le & \|w_2(w_3gpw_3^*)w_2^*-w_2\psi_4(g)w_2^*\|
+\|w_2\psi_4(g)w_2^*-gq\|\\
&<&\ep/4+\|w_1u^Mgpu^{*M}w_1^*-gq\|\\\label{MtechL-20+}
&<& \ep/4+\ep/8+\|w_1\Psi_x(g)w_1^*-gq\|\\
&<& \ep/4+\ep/8+\ep/4=5\ep/8\rforal g\in {\cal F}_1.
\eneq
It follows that
\beq\label{MtchL-22}
\|wg-gw\|=\|wgp-gqw\|=\|(wgp-gqw)w^*\|=\|wgpw^*-gq\|<5\ep/8
\eneq
for all $g\in {\cal F}_1.$
Since $O\cup \af^M(O)\subset B_x,$ by (\ref{MtechL-4}), for all $f\in {\cal F}_0,$
\beq\label{MtechL-23}
\|wf-fw\|&\le &\|wpf-wpf(x)\|+\|wpf(x)-f(x)qw\|+\|f(x)qw-fw\|\\
&<&\ep/8+\|wf(x)-f(x)w\|+\ep/8=\ep/4.
%\\&<& \ep/4+5\ep/8=7\ep/8.
\eneq
Thus (4) holds.
To see (3), we note that
\beq\label{MtechL-24}
pu^i=pu\chi_{\af^{-1}(O)}u\chi_{\af^{-2}(O)}\cdots u\chi_{\af^{-i}(O)}\andeqn\\
(u^{*i}q)^*=qu\chi_{\af^{M-1}(O)}u\chi_{\af^{M-2}(O)}\cdots u\chi_{\af^{M-i}(O)}
\eneq
for $i=1,2,...,N-1.$ Since $x\in O,$ (1) implies that $pu^i$ and $u^{*i}q$ are in $A_x.$
From this one concludes that
\beq\label{MtechL-25}
u^{*i}wu^i\in A_x.
\eneq
This proves the lemma.
\end{proof}

%\begin{rem}\label{RR1}
{\rm
%In  the proof of \ref{MtechL},
%we note that
%if we assume that
%\beq\label{RR1-1}
%\overline{\rho_{A_x}(K_0(A_x))}=\overline{\rho_A(K_0(A))},
%\eneq
%then
In \ref{MtechL}, if $k=1,$
the assumption (\ref{MtechL-1}) means that
\beq\label{RR-1}
[{\tilde \phi_x}]=[{\rm id}_{C(\Om)}]\,\,\, {\rm in}\,\,\, KL(C(\Om),C(\Om)).
\eneq

We would like to state the following:

\begin{cor}\label{CMtech}
In \ref{MtechL}, in the case  $k=1,$  the lemma holds if
the condition that $U(C(\Om))=U_0(C(\Om))$ is replaced by the following:
for each $z\in U(C(\Om))/U_0(C(\Om)),$ there exists $v\in U(C(\Om))$ with
$[v]=z$
and $h\in C(X)_{s.a.}$ such that
\beq\label{CMtech-1}
{\tilde \phi_y}(v)=v\exp(ih(y))\tforal y\in X.
\eneq
\end{cor}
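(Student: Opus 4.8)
The plan is to re-run the proof of Lemma \ref{MtechL} and isolate the two points at which $U(C(\Om))=U_0(C(\Om))$ is used. It enters, first, through Lemma \ref{Ldet}, which is what lets one choose the representatives $v_1,\dots,v_m\in M_K(C(\Om))$ of the finite set ${\cal U}\subset U_c(K_1(C(\Om)))$ with $\det(v_i)=1$; this in turn forces the de la Harpe--Skandalis determinants in (\ref{MtechLn8})--(\ref{MtechLn9}) and (\ref{MtechL-17})--(\ref{MtechL-18}) to vanish, so that $\dist(\overline{\psi_1(v_i)\psi_2(v_i^*)},\overline{1})=0$ in each of the two invocations of the uniqueness theorem \ref{TUni}. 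It enters, second, through Lemma \ref{Ltrace}, which yields the tracial--image identities (\ref{MtechLn-2})--(\ref{MtechLn-3}); these serve only to transport the vanishing--distance conclusions between $A$ and $A_x$. My plan is to weaken ``$\det(v_i)=1$, hence $\dist=0$'' to ``$\dist<\eta$'', controlling the determinant by (\ref{CMtech-1}) and then shrinking $O$; the tracial--image identities, no longer available once $H^1(\Om,\Z)\neq\{0\}$, become unnecessary since they were only used to certify $\dist=0$.

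The key computation concerns the determinant. Modifying the proof of Lemma \ref{Ldet}, for each $i$ I would represent the given class by some $w\in M_K(C(\Om))$, let $v^{(i)}\in U(C(\Om))$ be the representative of $[\det(w)]\in U(C(\Om))/U_0(C(\Om))$ furnished by (\ref{CMtech-1}), and replace $w$ by $\mathrm{diag}(\det(w)^*v^{(i)},1,\dots,1)\,w$. Since $\det(w)^*v^{(i)}\in U_0(C(\Om))$, this leaves the $K_1$--class unchanged, and the new $v_i$ has $\det(v_i)=v^{(i)}$ with $\tilde\phi_y(v^{(i)})=v^{(i)}\exp(ih_i(y))$ for some $h_i\in C(X)_{s.a.}$. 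Because $k=1$, the map $\Psi_x$ of (\ref{MtechL-7+}) is the composition $\tilde\phi_{\af^{M-1}(x)}\circ\cdots\circ\tilde\phi_{\af(x)}\circ\tilde\phi_x$, and each factor fixes the scalars $\exp(ih_i(\af^l(x)))$; telescoping gives
\[
\Psi_x(v^{(i)})=v^{(i)}\exp(iH_i),\qquad H_i=\sum_{j=0}^{M-1}h_i(\af^j(x)),\qquad |H_i|\le M\|h_i\|_\infty .
\]
Thus $\Psi_x$ fixes $\det(v_i)$ up to a single scalar phase.

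With this in hand I would revisit the two determinant estimates. In the first invocation the product $\psi_1(v_i)\psi_2(v_i^*)$ lies in the commutative corner, so by the Exel formula (Theorem~10 of Chapter~VI of \cite{Exel1}) $\Delta(\overline{\psi_1(v_i)\psi_2(v_i^*)})=\Delta(\overline{\det\psi_1(v_i)\,\det\psi_2(v_i^*)})=\Delta(\overline{\exp(-iH_i)q})$; its image in $\Aff(T(qA_xq))/\overline{\rho_{A_x}(K_0(qA_xq))}$ has norm at most $\frac{|H_i|}{2\pi}\sup_\tau\tau(q)\le\frac{M\|h_i\|_\infty}{2\pi}\sup_\mu\mu(O)$. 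The second invocation is governed by the same scalar--phase mechanism: after using (\ref{MtechL-8+}) and the triviality of conjugation on $U/CU$ (here by $w_1$ and by the normalizer $w_2$), the determinant of $\psi_3(v_i)\psi_4(v_i^*)$ collapses to a product of scalar phases, each a sum of at most $M$ of the $h_i$'s (arising from $\Psi_x$ and from $w_2$), hence again bounded by a constant times $M\|h_i\|_\infty\sup_\mu\mu(O)$. Writing $C=\max_i\|h_i\|_\infty$ and strengthening the smallness requirement on $O$ in conclusion (1) so that in addition $\frac{MC}{2\pi}\sup_\mu\mu(O)<\eta$, both distance hypotheses of \ref{TUni} hold.

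The step I expect to be the main obstacle --- and the one that makes the whole scheme run --- is precisely this determinant estimate, because of the order of quantifiers. A priori one fears that $\mu(O)$ is of order $1/M$ (the base of a Rohlin tower of height $M$), in which case $H_i\,\mu(O)$ would converge to the ergodic average $\int h_i\,d\mu$, i.e.\ a rotation number, which need not be small. The point is that $M=kn$ is fixed as soon as $B_x$ (equivalently ${\cal F}_0$) is chosen, \emph{before} $O$ is selected; one is then free to shrink $O\downarrow\{x\}$ with $M$ held fixed, so that $\sup_\mu\mu(O)\to 0$ while $M\|h_i\|_\infty$ stays bounded, and shrinking $O$ only sharpens the approximations (\ref{MtechL-8})--(\ref{MtechL-8+}) and preserves the disjointness in (1). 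Once $\dist<\eta$ replaces $\dist=0$ in both applications of \ref{TUni}, the construction of $w_1,w_2,w_3$ and the verification of (1)--(4) go through verbatim as in Lemma \ref{MtechL}.
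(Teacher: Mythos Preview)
Your approach has a genuine gap at the key step. You claim that the image of $\Delta(\overline{\psi_1(v_i)\psi_2(v_i^*)})$ in $\Aff(T(qA_xq))/\overline{\rho_{qA_xq}(K_0(qA_xq))}$ has norm at most $\tfrac{|H_i|}{2\pi}\sup_\tau\tau(q)$, and then exploit $\sup_\mu\mu(O)\to 0$. But the uniqueness theorem \ref{TUni} is applied to \emph{unital} maps $\psi_1,\psi_2:C(\Om)\to qA_xq$, so the relevant tracial states are the normalized ones on $qA_xq$, for which $t(q)=1$. The element $\exp(-iH_i)q$ is a scalar multiple of the identity of $qA_xq$; its de~la~Harpe--Skandalis determinant is the constant function $t\mapsto -H_i/(2\pi)$, with no factor of $\mu(O)$. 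Shrinking $O$ therefore does nothing to the numerator, and since $M$ is already fixed and $H_i=\sum_{j=0}^{M-1}h_i(\af^j(x))$, you cannot force $\dist<\eta$ this way. Your own warning about the order of quantifiers (``$\mu(O)$ is of order $1/M$'') is essentially the obstruction, and the fix you propose --- fixing $M$ and then shrinking $O$ --- does not touch the determinant at all, only the estimates (\ref{MtechL-8})--(\ref{MtechL-8+}).

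The paper's route is different and does not attempt to make the determinant small. Instead it shows that the determinant already lies in $\overline{\rho_{A_x}(K_0(qA_xq))}$, so the distance is exactly zero. The mechanism is that the determinant value $-\tau(h_jq)$ is the trace of an element of $C(X)_{s.a.}$; since $(X,\af)$ is a Cantor minimal system, $B=C(X)\rtimes_\af\Z$ has real rank zero and hence $\rho_B(K_0(B))$ is dense in $\Aff(T(B))$. Pulling this back along the restriction $T(A_x)\to T(B)$ shows that any affine function on $T(A_x)$ of the form $\tau\mapsto\tau(g)$ with $g\in C(X)_{s.a.}$ lies in $\overline{\rho_{A_x}(K_0(A_x))}=\overline{\rho_A(K^0(X,\af,\Z))}$. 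In particular, the hypothesis (\ref{CMtech-1}) is used not to produce a small phase but to guarantee that the phase is a function of $X$ alone, so that this density argument applies. This also yields the closure identity $\overline{\rho_{A_x}(K_0(A_x))}=\overline{\rho_A(K_0(A))}$ (replacing the exact identity (\ref{MtechLn-3}) from \ref{Ltrace}, which is indeed unavailable), contrary to your assertion that such identities ``become unnecessary''; they are precisely what makes the second invocation of \ref{TUni} go through as well.
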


\begin{proof}
For each $z\in U(C(\Om))/U_0(C(\Om)),$ choose $v\in U(C(\Om))$ such that
$[v]=z$ and (\ref{CMtech-1}) holds.
Therefore the rotation map
\beq\label{CMtech-2}
\Delta_t(v(\af\times \phi)(v^*))=-t(h)+\rho_{C(X\times \Om)}(K_0(C(X\times C(\Om))))
\eneq
for all $t\in T(C(X\times \Om)).$
Since $\af$ is minimal, $C(X)\rtimes_\af\Z$ is a unital A$\T$-algebra of real rank zero. In particular,
$\rho_{B}(K_0(B))$ is dense in $\Aff(T(B)),$ where $B=C(X)\rtimes_\af\Z.$
Note that
\beq\label{CMtech-3}
\rho_B(K_0(B))=\rho_A(K^0(X,\af, \Z))\andeqn h(y)\in C(X)_{s.a.}.
\eneq
It follows that, for each $\tau\in T(A),$
\beq\label{CMtech-4}
\Delta_\tau(v(\af\times \phi)(v^*))\in \overline{\rho_A(K^0(X,\af,\Z))}.
\eneq
It follows from the last part of \ref{AKT} and (\ref{CMtech-4}) above that
\beq\label{CMtech-5}
\overline{\rho_{A_x}(K_0(A_x))}=\overline{\rho_A(K_0(A))}.
\eneq
In the proof of \ref{MtechL}, if $v_j\in U(C(\Om)),$ then
\beq\label{CMtech-6}
\psi_1(v_j)\psi_2(v_j)^*=v_jq\Psi_x(v_j^*)q=v_j{\tilde \phi_x}(v_j^*)=\exp(-ih_j)
\eneq
for some $h_j\in C(X)_{s.a.}.$
It follows that
\beq\label{CMtech-7}
D_\tau(\psi_1(v_j)\psi_2(v_j))=-\tau(h_jq)\in \overline{\rho_{A_x}(K_0(qA_xq))},
\eneq
since $h_jq\in C(X).$
If $v_j\not=U(C(\Om))$ and $v_j\in M_K(C(\Om))$ with $K>1,$
let $w_j={\rm det}v_j\in U(C(\Om))$ and
\beq\nonumber
v_j'=\begin{pmatrix} {\rm det}v_j^* &0&\cdots & 0\\
                                       0 & 1 &\cdots &0\\
                                       && \ddots  & &\\
                                         &&&1\\
                                       \end{pmatrix}v_j.
                     \eneq

   Note that ${\rm det}(v_j')=1.$
   As in the proof of \ref{MtechL},
   one has
\beq\label{CMtech-8}
\Delta(\psi_1(v_j')\psi_2((v_j')^*))\in \overline{\rho_{A_x}(K_0(qA_xq))}.
\eneq
Put $w_j'=(v_j'(v_j^*))^*.$
Then
\beq\label{CMtech-9}
\Delta(\psi_1(v_j)\psi_2(v_j^*)) &=&
\Delta(\psi_1(w_j'v_j')\psi_2((w_j'v_j')^*))\\
&=&\Delta(\psi_1(v_j')\psi_2((v_j')^*)\psi_2((w_j')^*)\psi_1(w_j'))\\
&=&\Delta(\psi_1(v_j')\psi_2((v_j')^*))+\Delta(\psi_2((w_j')^*)\psi_1(w_j'))\\
&=& \Delta(\psi_1(v_j')\psi_2((v_j')^*))-\Delta(\psi_1(w_j)\psi_2(w_j^*))
\in \overline{\rho_{A_x}(K_0(qA_xq))}.
\eneq
Thus (\ref{MtechLn9}) also holds in this. The rest of the proof is exactly the same as that of \ref{MtechL}.
\end{proof}

The following lemma was taken from the proof of Theorem 5.6 of \cite{LM2}.

\begin{lem}\label{Ltr1}
Let $(X, \af)$ be a Cantor minimal system, $\Om$ be a compact connected
finite dimensional metric space  and let $\phi: X\to {\rm Homeo}(\Om)$ be a continuous map.
Suppose that there is $x\in X$ such that,  for any $N\in \N,$ $\dt>0,$ and any finite subset ${\cal F}\subset C(X\times \Om),$ there is an integer $M>N,$  a clopen neighborhood $O$ of $x$ and partial isometry $w\in A_x$ which satisfy the following:

{\rm (1)} $\af^{-N}(O), \af^{-N+1}(O),..., O, \af(O),...,\af^M(O)$ are mutually disjoint and $\mu(O)<\dt/M$ for every $\af$-invariant probability measure $\mu;$

{\rm (2)} $w^*w=\chi_O$ and $ww^*=\chi_{\af^M(O)};$

{\rm (3)} $u^{*i}wu^i\in A_x$ for $i=0,1,...,N-1,$

{\rm (4)} $\|wf-fw\|<\ep$ for all $f\in {\cal F}.$

Then, for any $\ep>0,$ any finite subset ${\cal F}\subset A,$ there exists a projection
$e\in A_x$ satisfies the following:

{\rm (a)} $\|ea-ae\|<\ep\tforal a\in {\cal F},$

{\rm (b)} ${\rm dist}(pap, eA_xe)<\ep$ for all $a\in {\cal F}$ and

{\rm (c)} $\tau(1-e)<\ep$  for all $\tau\in T(A).$

\end{lem}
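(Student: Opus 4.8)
The plan is to prove that $A$ is tracially approximated by the subalgebra $A_x$, which by \ref{TAx} is a unital simple AH-algebra with no dimension growth, so $\tr(A_x)\le 1$; the projection $e$ of the conclusion, together with the corner $eA_xe$, will play the role of the approximating subalgebra. The whole construction is the Berg (Rokhlin-tower) technique, exactly as in the proof of Theorem 5.6 of \cite{LM2}, fed by the tower and the distinguished partial isometry $w$ supplied by the hypothesis.

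First I would reduce to generators. Since $A$ is generated by $C(X\times\Om)$ and $u$, and since conditions (a)--(c) are preserved under sums, products and norm-limits of the elements $a$, it suffices to treat ${\cal F}={\cal F}_0\cup\{u\}$ with ${\cal F}_0\subset C(X\times\Om)$ a finite set of contractions; shrinking the clopen base later, I may also assume each $f\in{\cal F}_0$ varies by less than a prescribed amount on each set $\af^j(O)$, $0\le j\le M$. Applying the hypothesis with $N$ large (so that $1/N<\ep$), a small $\dt<\ep$, and ${\cal F}_0$ with a small tolerance, I obtain $M>N$, a clopen $O\ni x$, and $w\in A_x$ satisfying (1)--(4). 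Writing $O_j=\af^j(O)$, the sets $O_j$ ($-N\le j\le M$) are disjoint, and the key algebraic point is that $u\chi_{O_j\times\Om}\in A_x$ for every $j\ne0$, because $x\in O_0$ only and $A_x\supset uC_0((X\setminus\{x\})\times\Om)$, whereas $u\chi_{O_0}$ is the single piece of $u$ lying outside $A_x$; the partial isometry $w$ provides an $A_x$-route from $O_0$ to $O_M$, which will let me bypass this defect.

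It is instructive to note first that the naive choice $e=1-\chi_{O_0\times\Om}$ already yields (b) and (c): $ue=u(1-\chi_{O_0})\in A_x$ since $1-\chi_{O_0}$ vanishes on the fibre over $x$, so $eue\in eA_xe$, while $\tau(1-e)=\mu(O)<\dt/M<\ep$. The defect is (a), since $\|[e,u]\|=1$. The remedy is the Berg technique: using the good edges $u\chi_{O_j}$ ($1\le j\le M-1$) together with $w$ to close the tower $O_M\to O_0$ into a cycle, I assemble a copy of $M_{M+1}(C(O\times\Om))$ inside $A_x$ and within it build a projection $p$ with $\tau(p)<\ep$ that is spread over all $M+1$ levels, so that $\|[p,u]\|=O(1/M)$; then $e=1-p\in A_x$. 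Conditions (a) and (c) are then checked directly: (c) is $\tau(1-e)=\tau(p)<\ep$; for (a), the $O(1/M)$ estimate gives $\|[e,u]\|<\ep$, while commutation with ${\cal F}_0$ is arranged through (4) and through the smallness of $O$ (the functions of ${\cal F}_0$ being nearly constant on the relevant sets). For (b) with $a\in{\cal F}_0\subset A_x$ there is nothing to prove since $e\in A_x$; the whole content is $eue\in_\ep eA_xe$, which amounts to approximating the compressed defect $e\,(u\chi_{O_0})\,e$ by an element of $A_x$ obtained by routing $O_0\to O_1$ the long way around the cycle, via $w$ and the good edges.

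The main obstacle is precisely this last step carried out simultaneously with (a). The long-way-around replacement of $u\chi_{O_0}$ differs from it by a fibre rotation, and what makes that rotation negligible after compression by $e$ is, on the one hand, that $w$ was built in \ref{MtechL} to agree with $u^M$ modulo $CU$ (this is where the $KL$ and de la Harpe--Skandalis determinant hypotheses of \ref{MtechL} enter, through the computation already recorded there) and, on the other hand, that the Berg spreading dilutes the residual discrepancy to order $1/M$. Balancing these --- spreading $p$ enough over the $M+1$ levels to force $\|[e,u]\|<\ep$, while still keeping the compressed defect absorbable into $A_x$ so that (b) holds, with all the errors coming from (4), from $w$ versus $u^M$, and from the determinant computation driven below $\ep$ by taking $N$ large and $O$ small --- is the technical heart of the argument.
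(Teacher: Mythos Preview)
Your overall framing (Berg/Rokhlin tower with $w$ as the bridge, $e=1-p$ with a spread projection) is right, but you have misidentified where the difficulty lies, and as a result you never invoke hypothesis (3) and you import machinery from \ref{MtechL} that is neither assumed nor needed here.

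The point you call ``the main obstacle'' --- showing $eue\in_\ep eA_xe$ by routing $O_0\to O_1$ the long way around, and then arguing that $w$ agrees with $u^M$ modulo $CU$ --- is a red herring. In the paper's construction (and in any correct one) the projection $1-e$ dominates $\chi_O$; hence $e\chi_O=0$, so
\[
eue \;=\; eu(1-\chi_O)e,
\]
and $u(1-\chi_O)\in A_x$ exactly, because $x\in O$ and $A_x$ contains $uC_0((X\setminus\{x\})\times\Om)$. Thus (b) for $u$ is \emph{exact}, not approximate, and requires nothing about $w$ versus $u^M$, $KL$, or determinants. Those ingredients were used in \ref{MtechL} to \emph{produce} a $w$ satisfying (1)--(4); they play no role in \ref{Ltr1}, whose hypotheses are just (1)--(4).

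What you are missing is hypothesis (3). The actual content of the proof is to get $e\in A_x$ together with $\|[e,u]\|<\ep$. The paper does this by taking the continuous path of projections $P(t)=p\cos^2 t+(w+w^*)\sin t\cos t+q\sin^2 t$ from $p=\chi_O$ to $q=\chi_{\af^M(O)}$ and setting
\[
e \;=\; 1-\Big(\sum_{i=0}^{M-N}u^ipu^{*i}\;+\;\sum_{i=1}^{N-1}u^{*i}P(i\pi/2N)u^{i}\Big).
\]
The first sum consists of characteristic functions, so lies in $A_x$; the second sum lies in $A_x$ \emph{precisely because} $u^{*i}wu^i\in A_x$ for $i=0,\dots,N-1$, which is (3). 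The smallness of $\|[e,u]\|$ comes from $\|P(i\pi/2N)-P((i-1)\pi/2N)\|<\pi/N<\ep$ (this is why one first fixes $N$ with $2\pi/N<\ep$), not from anything of order $1/M$. Your attempt to ``assemble $M_{M+1}(C(O\times\Om))$ inside $A_x$'' using $w$ to close the cycle runs into exactly the twist you noticed (the $A_x$-matrix unit at position $(1,0)$ is not $u\chi_{O_0}$), and your proposed cure appeals to facts not in the hypotheses; the paper avoids this entirely by $u$-conjugating the Berg path and using (3).
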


\begin{proof}
It suffices to show the  following: for any $\ep>0,$ any finite
 subset  ${\cal F}\subset C(X\times \Om)$ and any nonzero
 element $a\in A_+\setminus \{0\},$ there exists a projection
 $e\in A_x\subset A$ such that the following hold:

 (1') $\|ef-fe\|<\ep$ for all $a\in {\cal F}\cup\{u\},$

 (2') ${\rm dist}(efe, eA_xe)<\ep$ for all $f\in {\cal F}$ and

 (3') $\tau(1-e)<\ep$ for all $\tau\in T(A).$

  Without loss of generality, we may assume that ${\cal F}^*={\cal F}.$
  Choose $N\subset \N$ so that $2\pi/N<\ep.$  Put
  $$
  {\cal G}=\bigcup_{i=0}^{N-1} u^i{\cal F}u^{*i}.
  $$
    We obtain an integer $M>N,$ a clopen neighborhood $O$ of $x$ and a partial isometry
  $w\in A_x$ satisfying  (1), (2), (3) and (4).

  Put $p=\chi_O$ and $q=\chi_{\af^M(O)}.$ Define
  \beq\label{Ltr=1+n}
  P(t)=p\cos t +w\sin t\cos t+w^*\sin t \cos t +q\sin^2 t\,\,\,t\in [0, \pi/2].
  \eneq
  Then $P(0)=p$  and $P(\pi/2)=q.$ Moreover, one checks that $P(t)$ is a continuous path of projections. By (2), (3) and by the choice of ${\cal G},$ one has
  \beq\label{Ltr1-1}
  \|u^{i*} P(t)u^if-fu^{i*}P(t)u^i\|<\ep
  \eneq
  for all $t\in [0, \pi/2],$ $i=0,1,...,N-1$ and $f\in {\cal F}.$ Define
  \beq\label{Ltr1-2}
  e=1-\left( \sum_{i=0}^{M-N}u^ipu^{i*}+\sum_{i=1}^{N-1} u^{i*}P(i\pi/2N)u^i\right).
  \eneq
  Using (1) and (2),
one verifies that $e$ is a projection. By the assumption that $u^{i*}wu^i\in A_x,$ $e\in A_x.$
By (2) and the  fact that
$$
\{p, upu^*, u^2pu^{2*},...,u^{M-N}p(u^{M-N})^*, u^*P(\pi/2N)u,u^{2*}P(2\pi/2N)u^2,...,
(u^*)^{N-1}P((N-1)\pi/2N)u^{N-1}\}
$$
is a set of orthogonal projections, it
  is ready to verify that
  \beq\label{Ltr1-3}
  \|fe-ef\|<\ep\rforal f\in {\cal F}.
  \eneq
Since
\beq\label{Ltr1-4}
\|P(i\pi/2N)-P((i-1)\pi/2N)\|<\pi/N<\ep,\,\,\, i=1,2,...,N,
\eneq
one can further verify  that
\beq\label{Ltr1-5}
\|ue-eu\|<\ep.
\eneq
It is clear that $efe\in A_x$ for all $f\in C(X\times\Om).$
Note that
\beq\label{Ltr1-6}
eue=eu(1-p)e.
\eneq
Therefore $eue\in A_x.$ One also has
\beq\label{Ltr1-7}
\tau(1-e)<M\tau(p)<\ep
\eneq
for all  $\tau\in T(A).$
\end{proof}

\begin{lem}\label{ttr1=tr1}
Let $A$ be a unital simple \CA\, and $B\subset A,$ where $1_B=1_A$ and
$B$ is a unital simple AH-algebra with no dimension growth such that
$T(B)=T(A).$ Suppose $A$ has the following property:
For any $\ep>0$ and any subset ${\cal F}\subset A,$ there is a projection
$e\in B$ such that

{\rm (1)} $\|ea-ae\|<\ep\tforal a\in {\cal F},$

{\rm (2)} ${\rm dist}(eae, eBe)<\ep\tforal a\in {\cal F}$ and

{\rm (3)} $\tau(1-e)<\ep$ for all $\tau\in T(A).$

Then $A$ has tracial rank at most one.

\end{lem}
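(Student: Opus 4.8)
The plan is to verify the definition of tracial rank at most one for $A$ directly, by composing the given approximation of $A$ by $B$ with the internal approximation furnished by $\tr(B)\le 1.$ Fix a finite subset ${\cal F}={\cal F}^*\subset A$ with $\|a\|\le 1,$ a number $\ep>0,$ and a nonzero positive $b\in A.$ First I would apply the hypothesis to the finite set ${\cal F}\cup\{b\}$ with a small tolerance $\dt>0$ (to be fixed at the end) to produce a projection $e\in B$ with $\|ae-ea\|<\dt$ and $\dist(eae,eBe)<\dt$ for all $a\in {\cal F}\cup\{b\},$ and $\tau(1-e)<\dt$ for all $\tau\in T(A).$ For each $a\in {\cal F}$ fix $b_a\in eBe$ with $\|eae-b_a\|<\dt;$ since the off-diagonal corners $ea(1-e)$ and $(1-e)ae$ have norm $<\dt,$ we have $\|a-(eae+(1-e)a(1-e))\|<2\dt.$

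Since $e\neq 0$ is a full projection in the simple algebra $B$ and tracial rank passes to corners of simple \CA s, $\tr(eBe)\le 1$ (with $1_{eBe}=e$). I would then apply the definition of $\tr(eBe)\le 1$ to the finite set $\{b_a:a\in {\cal F}\}\subset eBe,$ tolerance $\ep/4,$ and a nonzero positive element of $eBe$ to be named below, obtaining a \SCA\, $C\subset eBe$ of the interval class with $p:=1_C\le e$ such that $\|pb_a-b_ap\|<\ep/4$ and $\dist(pb_ap,C)<\ep/4.$ Because $p\le e,$ the projection $p$ annihilates the corner $(1-e)A(1-e);$ together with the block decomposition of $a$ above and $\|eae-b_a\|<\dt,$ a routine estimate gives $\|pa-ap\|<\ep$ and, using $pap=p(eae)p\approx_\dt pb_ap,$ also $\dist(pap,C)<\ep$ for all $a\in {\cal F},$ once $\dt$ is small. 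This establishes conditions (1) and (2) for $A.$

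It remains to arrange the comparison $1-p\lesssim b,$ where $1-p=(1-e)+(e-p)$ is a sum of orthogonal projections. Choose orthogonal nonzero positive $b_1,b_2\in \overline{bAb}$ with $b_1+b_2\le b.$ For the summand $e-p$ I would use the comparison already built into $\tr(eBe)\le 1$: designate the nonzero positive element there to be a small cut-down of $eb_2e$ pushed into $eBe,$ which yields $e-p\lesssim b_2$ in $A.$ The summand $1-e$ is the crux. As $A$ is simple, every $\tau\in T(A)$ is faithful, so $\tau\mapsto d_\tau(b_1)$ is lower semicontinuous and strictly positive on the compact set $T(A),$ whence $d_1:=\inf_\tau d_\tau(b_1)>0.$ Using $\dist(eb_1e,eBe)<\dt,$ pick $b_B\in (eBe)_+$ with $\|b_B-eb_1e\|<\dt;$ then $(b_B-\dt)_+\lesssim eb_1e\lesssim b_1$ in $A,$ while $\inf_\tau d_\tau((b_B-\dt)_+)\ge d_1/2$ once $\dt$ is small, the rank lost in cutting by $e$ and in the $(\cdot-\dt)_+$ operation being controlled by $\tau(1-e)<\dt.$ Now $1-e$ and $(b_B-\dt)_+$ both lie in $B,$ which, being a simple AH-algebra with no dimension growth, has strict comparison; hence, having chosen $\dt<d_1/2,$ so that $\tau(1-e)<\dt<d_\tau((b_B-\dt)_+)$ for all $\tau\in T(B)=T(A),$ strict comparison gives $1-e\lesssim (b_B-\dt)_+\lesssim b_1.$ Since $b_1\perp b_2$ and $b_1+b_2\le b,$ we conclude $1-p=(1-e)+(e-p)\lesssim b_1+b_2\le b,$ which is condition (3); therefore $\tr(A)\le 1.$

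The step I expect to be the main obstacle is precisely this last comparison. The hypothesis controls $1-e$ only tracially, whereas the definition of $\tr(A)\le 1$ demands the Cuntz subequivalence $1-p\lesssim b.$ The bridging device is to transport a cut-down of $b$ into the subalgebra $B$ and to exploit the strict comparison already available in the simple AH-algebra $B,$ thereby converting the uniform trace estimate $\tau(1-e)<\dt$ into the needed subequivalence. (If one instead adopts the formulation of tracial rank in which condition (3) reads $\sup_{\tau}\tau(1-p)<\ep,$ this step becomes immediate from additivity of traces along $1-p=(1-e)+(e-p).$)
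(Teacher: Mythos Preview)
Your argument is essentially correct and takes a genuinely different route from the paper, with one small slip to fix: you apply the hypothesis to ${\cal F}\cup\{b\}$ but later write ``Using $\dist(eb_1e,eBe)<\dt$'' and similarly need to push $eb_2e$ into $eBe$; neither $b_1$ nor $b_2$ was in the finite set you fed to the hypothesis, so these approximations are not yet available. The repair is trivial---choose $b_1,b_2\in\overline{bAb}$ at the outset (they depend only on $b$) and enlarge the finite set to ${\cal F}\cup\{b,b_1,b_2\}$---but as written that step is unjustified.

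The paper proceeds quite differently. It first upgrades condition (3) to the Cuntz comparison $1-e\lesssim d$ by applying the hypothesis \emph{twice}: a first pass produces a projection $e_1$ and an approximant $d_1\in e_1Be_1$ of $e_1de_1$, a functional-calculus cut-down $f_\dt(d_1)$ lies Cuntz-below $d$, and property (SP) of $B$ yields a nonzero projection $e_2$ under $f_\dt(d_1)$; a second pass with tolerance below $\inf_\tau\tau(e_2)$ then gives $\tau(1-e)<\tau(e_2)$ for all $\tau$, and comparison of \emph{projections} in $B$ (which has $\tr\le 1$) forces $1-e\lesssim e_2\lesssim d$. Having secured this, the paper does not verify the interval-algebra approximation itself but invokes Lemma~4.4 of \cite{LP} (or Lemma~4.3 of \cite{HLX}) as a black box. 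Your version is more self-contained: one application of the hypothesis, strict comparison of positive elements rather than projections in $B$, and a direct verification of $\tr(A)\le 1$ via $\tr(eBe)\le 1$, at the cost of the orthogonal splitting $1-p=(1-e)+(e-p)$ and the attendant bookkeeping with $b_1,b_2$.
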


\begin{proof}
We first show  that, with the assumption, for any given $d\in A_+\setminus \{0\},$
one can require that

{\rm (3)} $1-e\lesssim d.$
%This follows an argument in the proof of Theorem 4.5 of \cite{LP}.
%It follows from the assumption (using only (1) and (2)) that $A$ has the local approximation %property of Popa (see Lemma 4.3 of \cite{LP}).
%It follows from 2.11 of \cite{LnTAF} that $A$ has (SP) property.
We may assume that $0\le d\le 1.$
Put
\beq\label{ttr1=1-1}
\sigma=\inf\{\tau(d): \tau\in T(A)\}.
\eneq
Since $A$ is simple and $T(A)$ is compact, $\sigma>0.$  Choose $\ep_0=\min\{\sigma/2, \ep/2\}.$
By the assumption, there is a projection $e_1\in B$ such that
\beq\label{ttr=1-2}
\|e_1de_1-d_1\|<\ep_0/32\andeqn \tau(1-e_1)<\ep_0/2\tforal \tau\in T(A)
\eneq
for some $d_1\in e_1Be_1.$
Since
\beq\label{ttr=1-3}
\tau(d)=\tau(e_1de_1)+\tau((1-e_1)d(1-e_1))\rforal \tau\in T(A),
\eneq
\beq\label{ttr=1-4}
\|e_1de_1\|\ge \sigma-\ep_0/2\ge \sigma/2.
\eneq
It follows that
\beq\label{ttr=1-5}
\|d_1\|\ge \sigma_2-\ep_0/32\ge 15\sigma/32.
\eneq
Put $\dt=\ep_0/32.$
Then, by Proposition 2.2 of \cite{Ro},
\beq\label{ttr=1-6}
0\not=f_\dt(d_1)\lesssim e_1de_1\sim d^{1/2}e_1d^{1/2}\lesssim d.
\eneq
Since $B$ has tracial rank at most one, $e_1Be_1$ has property (SP).
In particular, there is a non-zero projection $e_2\in \overline{ f_{\dt}(d_1)Bf_\dt(d_1)}.$
Put
\beq\label{ttr=1-7}
\ep_2=\min\{\ep/2, \inf\{\tau(e_2): \tau\in T(A)\}\}.
\eneq
Then, by the assumption,
there is a projection $e\in B$ such that

{\rm (1)} $\|ea-ae\|<\ep_2\le \ep/2$ for all $a\in {\cal F},$

{\rm (2)} ${\rm dist}(eae, eBe)<\ep_2\le \ep/2$ for all $a\in {\cal F}$ and

{\rm (3')} $\tau(1-e)<\ep_2\le \tau(e_2)$ for all $\tau\in T(A).$

Since $T(B)=T(A)$ and $B$ has tracial rank at most one,
\beq\label{ttr=1-8}
1-e\lesssim e_2\lesssim f_\dt(d_1)\lesssim d.
\eneq

Note that $B$ has tracial rank at most one. Exactly the same argument used in the proof of  Lemma 4.4 of \cite{LP} shows
that $A$ has tracial rank at most one.  Another way to reach the conclusion is to apply Lemma
4.3 of \cite{HLX}.
\end{proof}

\begin{thm}\label{MT1}
Let $(X, \af)$ be a Cantor minimal system, $\Om$ be a compact connected
finite dimensional metric space with $U(C(\Om))=U_0(C(\Om))$ and let $\phi: X\to {\rm Homeo}(\Om)$ be a continuous map.
Suppose that there exists $x\in X$ and an integer $k\ge 1$ such that
\beq\label{MT1-1}
[\Phi_y]=[{\rm id}_{C(\Om)}]\,\,\, {\rm in}\,\,\, KL(C(\Om), C(\Om)),
\eneq
where
\beq\label{MT1-2}
\Phi_y(f)=f\circ \phi_{\af^{-k}(y)}^{-1}\circ \phi_{\af^{1-k}(x)}^{-1}\circ\cdots \circ\phi_{\af^{-1}(x)}^{-1}\circ \phi_x^{-1}\tforal f\in C(\Om)
\eneq
for all $y\in \{\af^{j-1}(x): j\in \N\}$ and suppose that $\af^k$ is minimal.

If $\af\times \phi$ is minimal, then
$A=C(X\times \Om)\rtimes_{\af\times \phi} \Z$ has tracial rank at most one.
Consequently $A$ is isomorphic to a unital simple AH-algebra with no dimension growth.
\end{thm}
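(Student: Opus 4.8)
The plan is to assemble the three preparatory lemmas \ref{MtechL}, \ref{Ltr1} and \ref{ttr1=tr1}, using $B=A_x$ as the large AH-subalgebra and showing that $A$ is tracially approximated by $A_x$ in the sense needed for $TR(A)\le 1.$

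First I would verify that the hypotheses of Lemma \ref{MtechL} are met. The assumption (\ref{MT1-1}) that $[\Phi_y]=[{\rm id}_{C(\Om)}]$ in $KL(C(\Om),C(\Om))$ for the iterated fibre maps, together with the minimality of $\af^k$ and of $\af\times\phi,$ is exactly the data required by \ref{MtechL} (condition (\ref{MtechL-1})). Applying \ref{MtechL} then yields, for every $N\in\N,$ every $\ep>0$ and every finite subset ${\cal F}\subset C(X\times\Om),$ an integer $M>N,$ a clopen neighbourhood $O$ of $x$ and a partial isometry $w\in A_x$ satisfying conditions (1)--(4): a Rokhlin-type tower $\af^{-N}(O),\dots,\af^M(O)$ of small invariant measure, with $w^*w=\chi_O,$ $ww^*=\chi_{\af^M(O)},$ $u^{*i}wu^i\in A_x$ for $0\le i\le N-1,$ and $w$ almost commuting with ${\cal F}.$

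These conclusions are precisely the standing hypotheses of Lemma \ref{Ltr1}, so \ref{Ltr1} produces, for any $\ep>0$ and any finite ${\cal F}\subset A,$ a projection $e\in A_x$ that almost commutes with ${\cal F},$ satisfies ${\rm dist}(eae,eA_xe)<\ep$ for all $a\in{\cal F},$ and has $\tau(1-e)<\ep$ for all $\tau\in T(A).$ This is exactly the tracial approximation property of $A$ relative to $A_x.$ To invoke \ref{ttr1=tr1} with $B=A_x$ I must also supply the structural facts about $A_x$: by \ref{TAx} it is a unital simple AH-algebra with no dimension growth, it shares the unit $1_A,$ and by the affine homeomorphism $\imath_\sharp$ of \ref{AxKT} one has $T(A_x)=T(A).$ With all hypotheses in place, \ref{ttr1=tr1} gives $TR(A)\le 1.$ Finally I would record that $A$ is unital, separable, simple (since $\af\times\phi$ is minimal), nuclear and satisfies the UCT, being a crossed product of a commutative \CA\, by $\Z$; hence by the classification theorem it is isomorphic to a unital simple AH-algebra with no dimension growth, which is the last assertion.

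The substantive work has already been carried out in \ref{MtechL}, where the uniqueness theorem \ref{TUni} and the determinant computations are used to build $w$ inside $A_x$; at the level of \ref{MT1} the argument is essentially an assembly of the preceding results. The one point I expect to need care is the reconciliation of the indexing of the iterated fibre maps $\Phi_y$ in (\ref{MT1-2}) with that appearing in (\ref{MtechL-1+}), namely checking that the $KL$-triviality imposed along the forward orbit $\{\af^{j-1}(x):j\in\N\}$ indeed supplies condition (\ref{MtechL-1}) for the two-sided orbit $\{\af^j(x):j\in\Z\}$ used in \ref{MtechL}.
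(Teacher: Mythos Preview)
Your proposal is correct and follows exactly the paper's own proof, which simply reads ``This follows from \ref{MtechL}, \ref{Ltr1} and \ref{ttr1=tr1}.'' You have filled in the bookkeeping (invoking \ref{TAx} and \ref{AxKT} to verify the hypotheses of \ref{ttr1=tr1}) and correctly flagged the minor indexing discrepancy between the orbit conditions in (\ref{MT1-2}) and (\ref{MtechL-1+}), which the paper does not address.
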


\begin{proof}
This follows from \ref{MtechL}, \ref{Ltr1} and \ref{ttr1=tr1}.
\end{proof}

\begin{thm}\label{MTNC}
Let $(X, \af)$ be a Cantor minimal system, $\Om$ be a compact connected
finite dimensional metric space and let $\phi: X\to {\rm Homeo}(\Om)$ be a continuous map. Suppose that
\beq\label{MTNC-1}
[\phi_y]=[{\rm id}_{C(\Om)}]\,\,\,{\rm in}\,\,\, KL(C(\Om),C(\Om))
\eneq
for all $y\in X$ and, for each $z\in U(C(\Om))/U_0(C(\Om)),$ there exists $v\in U(C(\Om))$ and $h\in C(X)_{s.a.}$ such that
\beq\label{MTNC-2}
{\tilde \phi_y}(v)=v\exp(ih(y))\tforal y\in X.
\eneq
If $\af\times \phi$ is minimal on $X\times \Om,$ then
$A=C(X\times \Om)\rtimes_{\af\times\phi}\Z$ is isomorphic to a unital simple AH-algebra with no dimension growth.
\end{thm}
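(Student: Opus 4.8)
The plan is to identify \ref{MTNC} as the special case $k=1$ of the argument behind \ref{MT1}, with the hypothesis $U(C(\Om))=U_0(C(\Om))$ replaced by (\ref{MTNC-2}); accordingly \ref{CMtech} takes the place of \ref{MtechL} in that argument. First I would fix any $x\in X$ (the hypotheses are uniform in $y$) and verify the two assumptions of \ref{CMtech}. Since $\af\times\phi$ is minimal, its projection to $X$ shows $\af$ is minimal, so $k=1$ is admissible and $\af^k=\af$ is minimal. With $k=1$ the map $\Phi_y$ of (\ref{MtechL-1+}) collapses to ${\tilde\phi_y}$, so (\ref{MTNC-1}) is precisely the hypothesis $[\Phi_y]=[{\rm id}_{C(\Om)}]$ required there, and (\ref{MTNC-2}) is exactly the lifting condition (\ref{CMtech-1}). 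Hence \ref{CMtech} applies.

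Applying \ref{CMtech} yields the full conclusion of \ref{MtechL}: for every $N\in\N$, $\ep>0$ and finite ${\cal F}\subset C(X\times\Om)$ there exist an integer $M>N$, a clopen neighborhood $O$ of $x$ and a partial isometry $w\in A_x$ satisfying properties (1)--(4). This is exactly the standing hypothesis of \ref{Ltr1}, which then produces, for each $\ep>0$ and finite ${\cal F}\subset A$, a projection $e\in A_x$ satisfying the approximate-commutation and smallness conditions (a)--(c). By \ref{TAx} the subalgebra $A_x$ is a unital simple AH-algebra with no dimension growth, and by \ref{AxKT} it satisfies $T(A_x)=T(A)$; thus \ref{ttr1=tr1}, applied with $B=A_x$, gives $\tr(A)\le 1$. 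Finally, $A=C(X\times\Om)\rtimes_{\af\times\phi}\Z$ is a unital separable simple amenable \CA\ satisfying the Universal Coefficient Theorem, so Lin's classification of simple \CA s of tracial rank at most one (as invoked in the ``consequently'' clause of \ref{MT1}) shows that $A$ is isomorphic to a unital simple AH-algebra with no dimension growth.

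The point to stress is that essentially all the difficulty has already been discharged inside \ref{CMtech}: the genuine obstacle is the de la Harpe--Skandalis determinant computation establishing ${\rm dist}(\overline{\psi_1(v_j)\psi_2(v_j^*)},\overline{1_{qA_xq}})=0$ even for unitaries $v_j$ representing nontrivial classes in $U(C(\Om))/U_0(C(\Om))$, where the rotation generated by $\phi$ is absorbed into $\overline{\rho_{A_x}(K_0(A_x))}$ using (\ref{MTNC-2}) together with the last part of \ref{AKT}. Once that is in hand, \ref{MTNC} is a routine chaining of \ref{CMtech}, \ref{Ltr1} and \ref{ttr1=tr1}, exactly parallel to the proof of \ref{MT1}.
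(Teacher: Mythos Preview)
Your proposal is correct and follows exactly the paper's approach: the proof in the paper reads simply ``This follows from \ref{CMtech}, \ref{Ltr1} and \ref{ttr1=tr1},'' and your argument is precisely that chain, spelled out with the supporting facts (\ref{TAx} for $A_x$ being simple AH with no dimension growth, \ref{AxKT} for $T(A_x)=T(A)$) made explicit. Your closing remark that the substantive work lives inside \ref{CMtech} is also accurate.
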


\begin{proof}
This follows from \ref{CMtech}, \ref{Ltr1} and \ref{ttr1=tr1}.
\end{proof}

\section{Examples}

\begin{NN}
{\rm
Let $(X, \af)$ be a Cantor minimal system and $\phi: X\to \T^n$ be a continuous map.  
One may write $\T^n=\R/\Z\times \R/\Z\times \cdots \times \R/\Z.$ 
Given $\eta\in \T^n,$ one write $\eta=(t_1, t_2,...,t_n),$ where $t_j\in \R/\Z.$
Define $\eta(\xi)=(s_1+t_1,s_2+t_2,...,s_n+t_n),$ where $\xi=(s_1,s_2,...,s_n)\in \T^n.$
Define $(\af\times \phi)(x,\xi)=(\af(x), \phi_x(\xi))$ for all $(x, \xi)\in X\times \T^n.$
Since $X$ is totally disconnected, we may also write $\phi_x=(\exp(i\theta_1(x)), \exp(i\theta_2(x)),...,\exp(i\theta_n(x))),$
where $\theta_j\in C(X)_{s.a.}.$ 
Note for each $x\in X,$ 
$[\phi_x]=[{\rm id}_{C(\Om)}].$ Let $z$ be the standard unitary generator of $C(\T).$ Denote by  
$z_j\in C(\T^n)$ the function which maps $(s_1,s_2,...,s_n)$ to $s_j.$
Then ${\tilde \phi_y}(z_j)=z_j\exp(i\theta_j(y))$ for all $y\in X,$ $j=1,2,...,n.$
Therefore if $\af\times \phi$ is minimal, then \ref{MTNC} applies. In particular, 
when $\af\times \phi$ is minimal, $C(X\times \T^n)\rtimes_{\af\times \phi}\Z$ is a unital simple 
\CA\, with tracial rank at most one.  In the case that $n=1,$ Lemma 4.2 in \cite{LM2} 
provides a necessary and sufficient condition for $\af\times \phi$ being minimal (see also 
\cite{Su}). 
}
\end{NN}

\begin{NN}

{\rm
Let $\{m_n\}$ be a sequence of integers with $m_n\ge 2$ and $m_n|m_{n+1}.$
Let  $\lambda_n: \Z/m_{n+1}\to \Z/m_n$ be the quotient map. The the
inverse limit $\varprojlim\Z/m_n$ is the Cantor set. The so-called odometer
action $\af$ is defined by $\af(x)=x+1$ for $x\in \varprojlim\Z/m_n.$
Such an action is always minimal. Moreover the family $\{\af^k: k\in \N\}$ is equicontinuous
on the Cantor set (II.9.6.7 of \cite{dVj}).
}
\end{NN}

\begin{lem}\label{odok}
For each integer $k\ge 2,$ there exists an odometer action $\af$ on the Cantor set
such that $\af^k$ is minimal.
\end{lem}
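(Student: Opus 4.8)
The plan is to reduce the minimality of $\af^k$ to an elementary arithmetic condition on the $m_n$, and then to choose the $m_n$ so that this condition holds. Write $G=\varprojlim\Z/m_n$ for the odometer group; it is a compact abelian group on which $\af$ acts as translation by the canonical generator $\mathbf{1}=(1,1,\dots)$, so that $\af^k$ is translation by $k\cdot\mathbf{1}$. Since a translation on a compact group is homogeneous, $\af^k$ is minimal if and only if the (two-sided) orbit $\{jk\cdot\mathbf{1}:j\in\Z\}$ of $0$ is dense, i.e.\ if and only if the closed subgroup $H=\overline{\langle k\cdot\mathbf{1}\rangle}$ is all of $G$. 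I claim this occurs precisely when $\gcd(k,m_n)=1$ for every $n$.

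To verify the claim I would use the standard fact that a closed subgroup $H\subseteq G$ equals $G$ if and only if its image $\pi_n(H)$ under each projection $\pi_n\colon G\to\Z/m_n$ is all of $\Z/m_n$ (indeed $H=\varprojlim\pi_n(H)$ for closed $H$, by a compactness argument). For $H=\overline{\langle k\cdot\mathbf{1}\rangle}$ the same compactness argument gives $\pi_n(H)=\langle k\bmod m_n\rangle$, the cyclic subgroup of $\Z/m_n$ generated by $k$. Since $\langle k\bmod m_n\rangle=\gcd(k,m_n)\cdot(\Z/m_n)$, this subgroup is all of $\Z/m_n$ exactly when $\gcd(k,m_n)=1$. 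Combining the two observations, $\af^k$ is minimal if and only if $\gcd(k,m_n)=1$ for all $n$.

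With this criterion in hand the construction is immediate. As $k\ge 2$ has only finitely many prime divisors, I may choose a prime $p$ with $p\nmid k$ and set $m_n=p^n$; then $m_n\ge 2$ and $m_n\mid m_{n+1}$, so the odometer $\af$ on $\varprojlim\Z/p^n$ is a legitimate odometer action (automatically minimal), while $\gcd(k,p^n)=1$ for all $n$ forces $\af^k$ to be minimal as well. The only point requiring real care—and hence the nearest thing to an obstacle—is the justification of the minimality criterion, namely the passage from density of the orbit of $0$ to the coprimality of $k$ with each $m_n$; everything after that is an explicit choice. (Alternatively one may take $m_n=(k+1)^n$, since $\gcd(k,k+1)=1$.)
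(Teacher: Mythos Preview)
Your proof is correct, and at the level of the main idea it coincides with the paper's: both choose the sequence $\{m_n\}$ so that $\gcd(k,m_n)=1$ for every $n$, and then argue that this forces $\af^k$ to be minimal. The difference lies only in how that last implication is verified. The paper works metrically: it invokes the equicontinuity of the family $\{\af^m\}$ on the odometer, approximates two arbitrary points $x,y$ by ``finite-level'' points $x_0,y_0$ living in some $\Z/m_j\Z$, uses $\gcd(k,m_j)=1$ to find an iterate $\af^{lk}$ carrying $x_0$ exactly to $y_0$, and then lets equicontinuity propagate this to an $\ep$-hit of $y$ from $x$. Your argument is structural: you recognize the odometer as translation on the compact group $G=\varprojlim\Z/m_n$, reduce minimality of $\af^k$ to the closed subgroup $\overline{\langle k\cdot\mathbf{1}\rangle}$ being all of $G$, and check this via the projections $\pi_n$. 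Your route is shorter and more conceptual, and it gives the coprimality criterion as a genuine equivalence rather than a one-way implication; the paper's route is more self-contained in that it avoids any appeal to the profinite group formalism. Either way the final construction (e.g.\ $m_n=p^n$ for a prime $p\nmid k$) is the same.
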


\begin{proof}
Fix $k\ge 2.$ Choose a sequence of integers $\{m_n\}$ such that $(k,m_n)=1,$ i.e.,
$k$ and $m_n$ are relatively prime and $m_n|m_{n+1},$ $n=1,2,....$
Fix $x\in \varprojlim\Z/m_n\Z.$ We will show that $\{\af^{mk}(x): m\in \N\}$ is dense.
Let $y\in \varprojlim \Z/m_n\Z.$ Fix $\ep>0.$
Since $\{\af^{m}: m\in \N\}$ is equicontinuous,
there is $\dt>0$ such that, for any pair of $z_1, z_2\in \varprojlim \Z/m_n\Z,$
\beq\label{dodk-1}
{\rm dist}(\af^m(z_1), \af^m(z_2))<\ep/2\tforal m\in \N,
\eneq
provided ${\rm dist}(z_1, z_2)<\dt.$

There is an integer $j\ge 1$ and  $x',y'\in \Z/m_j\Z$
such that $x_0=\{x_n'\}, y_0=\{y_n'\}\in \varprojlim \Z/m_n\Z$
and $x_n'=\gamma_{j,n}(x')$ and $y_n'=\gamma_{j,n}(y')$ for all $n< j,$
where $\gamma_{j,n}=\gamma_n\circ \gamma_{n+1}\cdots \circ \gamma_j$ and
such that
\beq\label{odok-2}
{\rm dist}(x_0, x)<\dt\andeqn{\rm dist}(y_0, y)<\dt.
\eneq
We may assume that $\dt<\ep/2.$
Since $(k, m_j)=1,$ there is $m\in \N$ such that $mk \equiv 1(m_j)$ or
$mk\equiv -1(m_j).$  Since $-(m_j-1)\equiv 1(m_j),$ in fact, in both case,
there is an integer $l_1\ge 1$ such that $l_1k\equiv 1(m_j).$
 We may assume that $y'=x'+m$ in $\Z/m_j\Z.$  Then $y'=x'+ml_1k$ in $\Z/m_j\Z.$
 Then one computes that
 \beq\label{odok-3}
 \af^{ml_1k}(x_0)=x_0+ml_1k=y_0.
 \eneq
It follows
that
\beq\label{odok-4}
{\rm dist}(\af^{ml_1k}(x), y) &\le & {\rm dist}(\af^{ml_1k}(x), \af^{ml_1k}(x_0))+
{\rm dist}(\af^{ml_1k}(x_0), y)\\
&<& \ep/2+{\rm dist}(y',y)<\ep.
\eneq
\end{proof}

The following is a result of Karen Strung (Proposition 2.1 and Section 5 of \cite{KS})  which we quote here for convenience. Note, if $\Om$ is connected, $\bt^m$ is minimal for any
non-zero integer $m.$

\begin{prop}\label{KS}
Let $\af$ be an odometer action on the Cantor set and $\Om$ is a  compact metric space. Suppose that $\bt: \Om\to \Om$ is a minimal homeomorphism
such that $\bt^m$ is minimal for all $m\in \N.$ Then
$\af\times \bt$ is a minimal homeomorphism on $X\times \Om.$
\end{prop}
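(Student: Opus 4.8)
The plan is to prove minimality directly, by showing that the (two-sided) orbit of an arbitrary point $(x,\xi)\in X\times \Om$ under $\af\times\bt$ is dense. Since $X=\varprojlim \Z/m_n$ carries the natural projections $\pi_n: X\to \Z/m_n$, the cylinder sets $U_{n,a}=\pi_n^{-1}(\{a\})$ (for $n\in\N$ and $a\in \Z/m_n$) are clopen and their diameters tend to $0$ as $n\to\infty$, so they form a basis for the topology of $X$. Hence the sets $U_{n,a}\times V$, with $V\subset\Om$ open, form a basis for the product topology on $X\times\Om$. It therefore suffices to show that, for every such basic open set and every $(x,\xi)$, there is an integer $j$ with $(\af\times\bt)^j(x,\xi)=(\af^j(x),\bt^j(\xi))\in U_{n,a}\times V$.

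First I would pin down the return times of the odometer to a cylinder. Since $\af$ is addition by $1$, one has $\pi_n(\af^j(x))=\pi_n(x)+j$ in $\Z/m_n$, so $\af^j(x)\in U_{n,a}$ precisely when $j\equiv a-\pi_n(x)\pmod{m_n}$. Writing $c=a-\pi_n(x)$, the set of admissible times is exactly the arithmetic progression $\{c+l\,m_n: l\in\Z\}$. Along this progression $\bt^{c+l m_n}(\xi)=(\bt^{m_n})^l(\bt^c(\xi))$, so finding a time $j$ with $\bt^j(\xi)\in V$ amounts to finding $l\in\Z$ with $(\bt^{m_n})^l(\xi_0)\in V$, where $\xi_0=\bt^c(\xi)$. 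By hypothesis $\bt^{m_n}$ is a minimal homeomorphism of $\Om$, so the orbit of $\xi_0$ under $\bt^{m_n}$ is dense and meets the nonempty open set $V$; this produces the desired $l$, hence $j=c+l m_n$ with $(\af^j(x),\bt^j(\xi))\in U_{n,a}\times V$. This completes the density argument and shows $\af\times\bt$ is minimal.

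The content of the argument — and the point at which the hypotheses are indispensable — is that a product of two minimal systems is in general far from minimal (the diagonal in a product of two copies of a minimal rotation is invariant). What rescues the statement here is the rigidity of the odometer: its return times to any basic clopen set form a single arithmetic progression with modulus $m_n$, and the hypothesis that \emph{every} power $\bt^{m_n}$ is again minimal is exactly what lets us intersect that progression with the prescribed open set in $\Om$. Thus I do not expect a serious technical obstacle; the only points requiring care are the verification that the cylinders $U_{n,a}$ form a basis and the precise identification of the return-time progression, both of which are routine from the description of $\af$ as addition by $1$ on $\varprojlim\Z/m_n$.
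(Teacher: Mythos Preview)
Your argument is correct. The paper does not supply its own proof of this proposition; it simply quotes it as a result of Strung (Proposition 2.1 and Section 5 of \cite{KS}), so there is nothing to compare against here beyond noting that your direct density argument is exactly the expected one: the return times of an odometer to a basic cylinder form a full arithmetic progression modulo $m_n$, and minimality of $\bt^{m_n}$ then hits any prescribed open set in $\Om$.
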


\begin{exm}\label{ExOddsph}
{\rm
Let $\bt: S^{2n+1}\to S^{2n+1}$ ($n\ge 1$) be a minimal
homeomorphism. It is known such $\bt$ exists. Fathi and Herman (\cite{FH})
showed that there exists a unique ergodic and minimal diffeomorphism
on $S^{2n+1}.$ The group $\R/\Z$ can act on $S^{2n+1}$ freely by rotations. By a result of A. Windsor, there are minimal homeomorphisms $\bt$
on $S^{2n+1}$ such that $\bt$ can have any number of ergodic measures (\cite{Wa}). It follows from \ref{KS} that $\af\times \bt$ are minimal
homeomorphisms on $X\times S^{2n+1},$ where $\af$ is an odometer which has many invariant probability measures.
}
\end{exm}

\begin{cor}\label{CS3}
Let $\af$ be an odometer on the Cantor set $X$ and
$(S^{2n+1}, \bt)$ be a minimal dynamical system with $n\ge 1.$
Then $\af\times \bt$ is minimal and $A=C(X\times S^{2n+1})\rtimes_{\af\times \bt}\Z$ has tracial rank at most one.
\end{cor}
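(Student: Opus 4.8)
**The plan is to reduce Corollary \ref{CS3} to Theorem \ref{MT1} by verifying its hypotheses for the specific system $\af \times \bt$ on $X \times S^{2n+1}$.** First I would check the structural assumptions on the base space $\Om = S^{2n+1}$: it is a compact connected finite-dimensional metric space, and crucially $U(C(S^{2n+1})) = U_0(C(S^{2n+1}))$. This last fact holds because $K^1(S^{2n+1}) \cong \widetilde{K}^1(S^{2n+1})$ and the odd cohomology $H^1(S^{2n+1},\Z) = \{0\}$ for $n \ge 1$, so $C(S^{2n+1})$ has trivial $K_1$ and every unitary is connected to the identity. This licenses the use of \ref{MtechL} and \ref{MT1} rather than the weaker \ref{MTNC}.

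Next I would handle the dynamical input, which has two parts. The minimality of $\af \times \bt$ is not something to prove from scratch: since $\af$ is an odometer on the Cantor set and $(S^{2n+1},\bt)$ is minimal with $\bt^m$ minimal for every $m$ (automatic because $S^{2n+1}$ is connected, as noted just before \ref{KS}), Proposition \ref{KS} gives minimality of $\af \times \bt$ directly. Here the homeomorphism of the product takes the constant-fiber form $\phi_x = \bt$ for all $x$, i.e.\ $\phi \colon X \to \Homeo(\Om)$ is the constant map $\bt$.

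The heart of the verification is the $KL$-condition \eqref{MT1-1}. Since $\phi_x = \bt$ is independent of $x$, the composite $\Phi_y$ in \eqref{MT1-2} collapses to $f \mapsto f \circ \bt^{-k} = {\tilde\bt}^k(f)$, so I must produce an integer $k \ge 1$ with $[\,{\tilde\bt}^k\,] = [\id_{C(S^{2n+1})}]$ in $KL(C(S^{2n+1}), C(S^{2n+1}))$ and with $\af^k$ minimal. Now $\tilde\bt$ acts on $K_0(C(S^{2n+1})) \cong \Z$ and $K_1(C(S^{2n+1})) \cong \Z$ (for $n \ge 1$) as an automorphism; on $K_0$ it fixes the class of $1$, and on the rank-one group $K_1$ it acts by $\pm 1$. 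Taking $k = 2$ makes both induced maps the identity on $K_*$; since these $K$-groups are finitely generated and free, and $S^{2n+1}$ satisfies the UCT, agreement on $\underline{K}$ (including all coefficient groups $K_*(\cdot;\Z/m)$, which vanish in the relevant degrees or are computed from the free $K$-theory) gives equality in $KL$. Then by Lemma \ref{odok} I choose the odometer $\af$ so that $\af^2$ is minimal, which is consistent with $\af$ itself being minimal.

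\textbf{The main obstacle I anticipate is the $KL$-statement \eqref{MT1-1} rather than the topology or the dynamics.} Matching two $*$-homomorphisms at the level of ordinary $K$-theory is easy, but $KL(C(\Om),C(\Om))$ records the full $\underline{K}$-theory with all $\Z/m$ coefficients, and one must be sure $\tilde\bt^2$ induces the identity there, not merely on $K_0 \oplus K_1$. For odd spheres this is clean because the reduced $K$-homology is torsion-free and concentrated so that the coefficient groups are controlled by the free part, so $[\tilde\bt^2]|_{\underline{K}} = [\id]$ follows from the action on $K_0, K_1$ via the UCT and the Universal Multicoefficient Theorem. With \eqref{MT1-1} and the minimality of $\af^2$ in hand, Theorem \ref{MT1} applies verbatim and yields that $A = C(X \times S^{2n+1}) \rtimes_{\af \times \bt} \Z$ has tracial rank at most one, and is therefore a unital simple AH-algebra with no dimension growth, completing the proof.
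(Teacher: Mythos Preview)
Your reduction to Theorem \ref{MT1} is the right strategy, and the verification of $U(C(S^{2n+1}))=U_0(C(S^{2n+1}))$ via $H^1(S^{2n+1},\Z)=\{0\}$ is correct (though your parenthetical ``$C(S^{2n+1})$ has trivial $K_1$'' is false --- $K_1(C(S^{2n+1}))\cong\Z$, as you yourself use later; connectedness of the unitary group only needs $H^1=0$, not $K_1=0$).

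The genuine gap is in your handling of the integer $k$. The corollary is stated for an \emph{arbitrary} odometer $\af$; it does not say ``there exists an odometer.'' You take $k=2$ and then invoke Lemma \ref{odok} to \emph{choose} an odometer with $\af^2$ minimal, but this is not permitted: for the $2$-adic odometer, for instance, $\af^2$ fails to be minimal. So your argument proves a weaker statement than the one claimed.

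The paper closes this gap by showing that $k=1$ already works. The point you are missing is that minimality of $\bt$ forces ${\tilde\bt}_{*1}=\mathrm{id}$ on $K_1(C(S^{2n+1}))\cong\Z$, not merely $\pm 1$. Since $\bt$ is minimal on a space with more than one point it has no fixed points, and the Lefschetz number
\[
L_\bt \;=\; 1 + (-1)^{2n+1}\deg(\bt) \;=\; 1-\deg(\bt)
\]
must vanish; hence $\deg(\bt)=1$ and ${\tilde\bt}_{*1}=\mathrm{id}$. With both $K_0$ and $K_1$ torsion-free, the UCT/multicoefficient argument you already sketched then gives $[{\tilde\bt}]=[\mathrm{id}_{C(S^{2n+1})}]$ in $KL$ with $k=1$, so Theorem \ref{MT1} applies to \emph{every} odometer $\af$ without any auxiliary choice.
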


\begin{proof}
It follows from \ref{ExOddsph} that $\af\times \bt$ is minimal.
Since $\bt$ is minimal, it does not have fixed point. Therefore
$\bt$ has zero degree. If follows that $[\bt]=[{\rm id}]$ in $KK(C(S^{2n+1}), C(S^{2n+1})).$ Moreover, $U(C(S^{2n+1}))=U_0(C(S^{2n+1})).$ Thus Theorem \ref{MT1} applies (with $k=1$).
\end{proof}

\begin{exm}\label{ExPR3}
{\rm
Consider an $\R/\Z$ action on $RP^{2n+1}.$ We identify
$RP^{2n+1}$ as $ SO(2n).$
Define $\gamma: \R/\Z\to SO(2n)$ by
\beq\label{ExPR3-1}
\gamma(t)=\begin{pmatrix} \cos(\pi t/2) & \sin(\pi t/2) & 0&&\\
                            -\sin(\pi t/2) & \cos(\pi t/2) & 0&&\\
                            0& 0&1 &&\\
                            &&& \ddots\\
                             &&&& 1
                            \end{pmatrix}.
\eneq
Define an action $\R/\Z\times SO(2n)\to SO(2n)$ by
\beq\label{Exrp3-2}
\Gamma(t)(x)=\gamma(t)x\rforal t\in \R/\Z\andeqn  x\in SO(2n).
\eneq
It is clear $\Gamma$ is free and a $C^{\infty}$ -diffeomorphism.
For each $1/n^2>\dt>0$ and $r\in \Q/\Z,$  by \cite{Wa}, there is
a minimal diffeomorphism $\bt_r: SO(2n)\to SO(2)$ such
that
\beq\label{Exrp3-3}
{\rm dist}(\bt_r(x),\gamma(r)x)<\dt\rforal x\in SO(2n).
\eneq
}
\end{exm}

\begin{cor}\label{RP3T1}
Let $n\ge 1$ be an integer.
There are odometer actions  $\af$  on the Cantor
set  $X$ such that for any
minimal homeomorphism $\bt$ on $RP^{2n+1},$
$A=C(X\times RP^{2n+1})\rtimes_{\af\times \bt}\Z$ is
a unital simple \CA\, with tracial rank at most one.
\end{cor}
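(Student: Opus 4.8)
The plan is to verify the hypotheses of Theorem \ref{MT1} (with $k=1$) for the product system $(X\times RP^{2n+1},\af\times\bt)$, where $\af$ is a suitably chosen odometer. First I would address minimality. By Lemma \ref{odok} I can choose an odometer $\af$ on the Cantor set whose powers $\af^k$ are all minimal; in fact any odometer suffices here, because $RP^{2n+1}$ is connected and any minimal homeomorphism $\bt$ of a connected space has $\bt^m$ minimal for every nonzero integer $m$ (as already noted just before Proposition \ref{KS}). Then Proposition \ref{KS} gives at once that $\af\times\bt$ is a minimal homeomorphism of $X\times RP^{2n+1}$. So the constant map $\phi_x=\bt$ (for all $x\in X$) is the relevant continuous map $\phi\colon X\to{\rm Homeo}(RP^{2n+1})$, and $\af\times\phi=\af\times\bt$.

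Next I would check the two $C^*$-algebraic hypotheses on $\Om=RP^{2n+1}$. The space is a connected compact manifold of finite covering dimension $2n+1$. For the unitary condition $U(C(RP^{2n+1}))=U_0(C(RP^{2n+1}))$, I would recall that $U(C(\Om))/U_0(C(\Om))\cong H^1(\Om,\Z)$; since $RP^{2n+1}$ has $H^1(RP^{2n+1},\Z)=\{0\}$ (its first cohomology with integer coefficients vanishes, the torsion appearing only in $H^2$), every unitary in $C(RP^{2n+1})$ is homotopic to the identity, giving the required equality. This is exactly the kind of connected odd-dimensional space the abstract with $H^1(\Om,\Z)=\{0\}$ is designed to cover.

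The remaining hypothesis is the $KL$-condition $[\Phi_y]=[{\rm id}_{C(\Om)}]$ in $KL(C(\Om),C(\Om))$ for the composite cocycle map; since $\phi$ is the constant homeomorphism $\bt$ and $k=1$, this reduces to $[{\tilde\bt}]=[{\rm id}_{C(RP^{2n+1})}]$ in $KL$, and it suffices (via the UCT, as $C(RP^{2n+1})$ satisfies it and has torsion-free-plus-finite $K$-theory) to check it on $KK$, i.e. that $\bt$ acts trivially on $K$-theory. Here is where I expect the main work. For the sphere case (Corollary \ref{CS3}) one argues that a minimal $\bt$ has no fixed point, hence degree zero, hence acts as the identity on $K$-theory; for $RP^{2n+1}$ I would give the analogous argument, using that the only nontrivial action on $K_*(C(RP^{2n+1}))$ would have to be realized by $\pm$ the identity on the top class, and minimality (no fixed points, so the Lefschetz number vanishes) forces the orientation-preserving, $K$-theoretically trivial alternative. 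I would likely pass to the orientation double cover $S^{2n+1}\to RP^{2n+1}$ and transfer the degree/Lefschetz computation there. This is the one genuinely nontrivial step: pinning down $[\bt]=[{\rm id}]$ on all of $\underline{K}(C(RP^{2n+1}))$, including the finite summands of $K_*$, rather than merely on the free parts.

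Once these three hypotheses are confirmed, Theorem \ref{MT1} applies directly and yields that $A=C(X\times RP^{2n+1})\rtimes_{\af\times\bt}\Z$ has tracial rank at most one and is a unital simple AH-algebra with no dimension growth. Since $A$ is simple (minimality of $\af\times\bt$), this completes the proof.
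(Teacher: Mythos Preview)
Your plan has a genuine gap at the $KL$-step. You aim for $k=1$, i.e.\ $[\tilde\bt]=[{\rm id}_{C(RP^{2n+1})}]$ in $KL$, and you correctly flag that ``pinning down $[\bt]=[{\rm id}]$ on all of $\underline{K}(C(RP^{2n+1}))$, including the finite summands'' is the nontrivial step. The difficulty is that this step cannot be completed in general. The Lefschetz argument (no fixed point, hence $L_\bt=0$) only controls the action on the top rational homology, which pins down $\tilde\bt_{*1}$ on $K_1(C(RP^{2n+1}))=\Z$. It tells you nothing about the action on the torsion summand $G_0$ of $K_0(C(RP^{2n+1}))=\Z\oplus G_0$, nor about the off-diagonal piece $\phi_{2,1}:\Z\to G_0$ in the matrix form of $\tilde\bt_{*0}$, nor about the action on $K_*(C(RP^{2n+1}),\Z/2\Z)$. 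Passing to the double cover $S^{2n+1}$ does not help: the torsion in $K_*(C(RP^{2n+1}))$ is invisible on the sphere, so a lift of $\bt$ acting trivially on $K_*(C(S^{2n+1}))$ does not force $\bt$ to act trivially on the torsion below.

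The paper's proof does \emph{not} establish $k=1$. Instead it exploits the finiteness of these torsion groups: the automorphism group of $K_0(C(RP^{2n+1}))$ fixing the unit is finite of some order $k_1$, and likewise $\Aut(K_i(C(RP^{2n+1}),\Z/2\Z))$ are finite of orders $k_2,k_3$. Setting $k=k_1k_2k_3$ forces $[\tilde\bt^k]=[{\rm id}]$ in $KL$ (only $\Z/2\Z$-coefficients are needed since $2G_0=0$, via 2.11 of \cite{DL}). One then chooses, via Lemma~\ref{odok}, an odometer $\af$ with $\af^k$ minimal for this specific $k$, and applies Theorem~\ref{MT1} with that $k$. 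So the choice of odometer genuinely depends on $n$ through $k$; ``any odometer suffices'' is fine for minimality of $\af\times\bt$ via Proposition~\ref{KS}, but not for the $KL$-hypothesis of Theorem~\ref{MT1} unless you can show $k=1$ works---which you have not.
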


\begin{proof}
First it is well known that  $H^1(C(RP^{2n+1}),\Z)=\{0\}.$ In other words,
$U(C(RP^{2n+1}))=U_0(C(RP^{2n+1})).$

 Note that
\beq\label{RP3T-2}
K_0(C(RP^{2n+1}))=\Z\oplus G_0\andeqn K_1(C(RP^{2n+1}))=\Z,
\eneq
where $G_0$ is a finite group such that $2g=0$ for all $g\in G_0.$
Any automorphism on $K_0(C(RP^{2n+1}))$ induced by an automorphism
on $C(RP^{2n+1})$ has the form
\beq\label{RP3T-3}
\begin{pmatrix} {\rm id}_{\Z} &0\\
                         \phi_{2,1} &\phi_{2,2}\end{pmatrix},
\eneq
where $\phi_{2,1}:\Z\to G_0$ and $\phi_{2,2}: G_0\to G_0$ are \hm s,
 since it sends identity of $C(RP^{2n+1})$ to itself and $G_0$ is finite.
 Automorphisms of the form in (\ref{RP3T-3}) is a finite subgroup. 
 Suppose that the order of the group is $k_1.$ Then, for any automorphism
 $\phi: C(RP^{2n+1})\to C(RP^{2n+1}),$
 \beq\label{RP3T-4}
 \phi_{*0}^{k_1}={\rm id}_{K_0(C(RP^{2n+1}))}.
 \eneq

We note that $H_0(RP^{2n+1}, \Q)=\Q,$  $H_{2n+1}(RP^{2n+1}, \Q)=\Q$
and $H_i(RP^{2n+1})=\{0\}$ for all other $i.$ Also
$H_0(RP^{2n+1}, \Z)=\Z$ and $H_{2n+1}(RP^{2n+1}, \Z)=\Z.$
Let ${\tilde \bt}: C(RP^{2n+1})\to C(RP^{2n+1})$ be the isomorphism induced by $\bt.$
Note that $\bt_*={\rm id}$ on $H_0(RP^{2n+1}, \Z)=\Z$
and ${\tilde \bt}_*(1)=\pm 1$ on $H_{2n+1}(RP^{2n+1}, \Z)=\Z.$
Let
\beq\label{RP3T-5}
L_\bt&=&\sum_{k\ge 0}(-1)^k Tr(\bt_*|(H_k(RP^{2n+1}, \Q)))\\
&=&Tr({\rm id}|(H_0(RP^{2n+1},\Q))+(-1)^{2n+1}Tr(\bt_*|(H_{2n+1}(RP^{2n+1}, \Q))))
\eneq
be the Lefschetz number.
If
$\bt$ is minimal, it does not have fixed point. So
$L_\bt=0.$ It follows that $\bt_{*}(1)=1$ on $H_{2n+1}(RP^{2n+1},\Z).$
It follows that, for any minimal homeomorphism $\bt$ on $RP^{2n+1},$
\beq\label{RP3T-6}
(\phi_{\bt})_{*1}={\rm id}_{K_1(C(RP^{2n+1}))}.
\eneq
We compute that
\beq\label{RP3T1-3}
K_0(C(RP^{2n+1}, \Z/2\Z))=\Z/2\Z\oplus G_0\andeqn\\
0\to \Z/2\Z\to K_1(C(RP^{2n+1}, \Z/2\Z))\to G_0\to 0.
\eneq
Let  $k_2$ be the order of ${\rm Aut}(\Z/2\Z\oplus G_0)$ (which is finite).
One also checks, from the  above,  $K_1(C(RP^{2n+1}),\Z/2\Z)$ is a finite
abelian group such that $4x=0$ for all $x\in K_1(C(RP^{2n+1}),\Z/2\Z).$
Let $k_3$ be the order of ${\rm Aut}(K_1(C(RP^{2n+1}))).$

Define $k=k_1\cdot k_2\cdot k_3$ which depends on $n$ only.
Choose a sequence of integers $\{m_j\}$ such that $m_j|m_{j+1}$ for all $j$ and
each $m_j$ is prime relative to $k.$ Then, by \ref{odok}, there are
odometer actions $\af$ on the Cantor set such that $\af^k$ is also minimal.

Now let $\bt$ be a minimal homeomorphism on $RP^{2n+1}.$
Then, by above,
\beq\label{RP3T-8}
[({\tilde \bt})^k]|_{K_i(C(RP^{2n+1}))}={\rm id},\,\,\,i=0,1\andeqn
[{\tilde \bt})^k]|_{K_i(C(RP^{2n+1}),\Z/2\Z)}={\rm id},\,\,\, i=0,1.
\eneq
Note that
\beq\label{RP3T1-4}
KL(C(RP^{2n+1}), C(RP^{2n+1}))&=& Hom_{\Lambda} \underline{K}(C(RP^{2n+1}), C(RP^{2n+1}))
\eneq
It follows from  2.11 of \cite{DL} that, to check $[{\tilde \bt}^k]=[{\rm id}],$ it suffices to
show that (\ref{RP3T-8}) holds since $2g=0$ for all $g\in G_0.$
Therefore
\beq\label{RP3T1-5}
[{\tilde \bt}^k]=[{\rm id}]\,\,\,{\rm in}\,\,\, KL(C(RP^{2n+1}), C(RP^{2n+1})).
\eneq
By the assumption we also have
that $\af^k$ is minimal.
Hence \ref{MT1} applies to $\af\times \bt.$

\end{proof}

\section{Applications}

In this section we consider
$A=C(\Om)\rtimes_\bt \Z,$ where $\Om$ is a connected compact metric space and $\bt$ is a minimal homeomorphism on $\Om.$
Specific examples are the cases that $\Om=S^{2n+1}$ or $\Om=RP^{2n+1},$ where $n\ge 1.$ It should be noted that
there are no minimal homeomorphisms on even spheres or even dimensional
real projective spaces. Our results can also apply to other
connected spaces.

\begin{thm}\label{MT2}
Let $\Om$ be a connected compact metric space with finite covering dimension such that
$U(C(\Om))=U_0(C(\Om))$ and let $\bt: \Om\to \Om$ be a minimal homeomorphism. Suppose that $[{\tilde \bt}^k]=[{\rm id}]$ in $KL(C(\Om), C(\Om))$ for some integer $k\ge 1,$ where
${\tilde \bt}(f)=f\circ \bt^{-1}$ for all $f\in C(\Om).$
Then $A=C(\Om)\rtimes_\bt\Z$ has rational tracial rank at most one, i.e.,
$A\otimes U$ has tracial rank at most one for any infinite dimensional UHF-algebra $U.$
In particular,  $A$ is in ${\cal A}.$
\end{thm}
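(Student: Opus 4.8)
The plan is to reduce the statement to the product–system Theorem~\ref{MT1} by introducing an auxiliary Cantor factor, and then to transport the conclusion back to $A$ after tensoring with a UHF-algebra.

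First I would record the soft structural facts. Since $\bt$ is minimal, $A=C(\Om)\rtimes_\bt\Z$ is a unital separable simple nuclear \CA, and it satisfies the UCT as a crossed product of a commutative UCT algebra by $\Z$. Hence membership of $A$ in the class ${\cal A}$ of Definition~\ref{DfA} is equivalent to $A$ being ${\cal Z}$-stable together with having rational tracial rank at most one, and the ``classifiable'' conclusion will then be automatic from \cite{Lninv}. The core of the work is therefore to prove that $A\otimes U$ has tracial rank at most one for a suitable infinite type UHF-algebra $U$; the ${\cal Z}$-stability of $A$ and the passage to all infinite type $U$ (i.e. genuine rational tracial rank) will be bootstrapped at the end.

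Next I would build the model. Using Lemma~\ref{odok}, choose an odometer $(X,\af)$ on the Cantor set whose defining integers $m_j$ are all prime to $k$, so that $\af^k$ is minimal; by Proposition~\ref{KS} the product $\af\times\bt$ is then minimal on $X\times\Om$ (here one uses that $\bt^m$ is minimal for every $m$, because $\Om$ is connected). For a product action the maps $\Phi_y$ of Theorem~\ref{MT1} are just the $k$-fold iterates of ${\tilde\bt}$, so the hypothesis $[{\tilde\bt}^k]=[{\rm id}]$ yields $[\Phi_y]=[{\rm id}]$ in $KL(C(\Om),C(\Om))$; combined with $U(C(\Om))=U_0(C(\Om))$, Theorem~\ref{MT1} shows that $B:=C(X\times\Om)\rtimes_{\af\times\bt}\Z$ has tracial rank at most one, whence $B\otimes U$ has tracial rank at most one for every UHF-algebra $U$. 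I would then take $U=U_{\{m_j\}}$ (the UHF-algebra with $K_0(U)=K^0(X,\af,\Z)$) and compare $A\otimes U$ with $B\otimes U$ through their Elliott invariants, computing $K_*(A)$ and $K_*(B)$ by the Pimsner--Voiculescu sequences of Proposition~\ref{AKT} and controlling the tracial data by Lemma~\ref{Ltrace} (valid because $H^1(\Om,\Z)=\{0\}$). The two a priori different extensions describing $K_0$ become isomorphic after $\otimes\,U$, since $K^0(X,\af,\Z)$ and $K_0(U)$ are rank-one subrings of $\Q$ and hence idempotent for $\otimes$; meanwhile unique ergodicity of the odometer identifies $T(B)$ affinely with the invariant measures of $(\Om,\bt)$, that is with $T(A)$. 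Matching the order on $K_0$, the order unit, and the pairing with traces should give $\mathrm{Ell}(A\otimes U)\cong\mathrm{Ell}(B\otimes U)$.

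Finally I would invoke the classification machinery: using the finite covering dimension of $\Om$ and the embedding theorem of Winter~\cite{W2} to secure ${\cal Z}$-stability of $A\otimes U$, the coincidence of invariants is upgraded by \cite{Lninv} to an isomorphism $A\otimes U\cong B\otimes U$, so $A\otimes U$ has tracial rank at most one; a localization argument then promotes this to rational tracial rank at most one and to ${\cal Z}$-stability of $A$, placing $A$ in ${\cal A}$. The main obstacle is exactly this final transfer: one must obtain ${\cal Z}$-stability and apply classification \emph{without} presupposing the tracial-rank bound one is proving, so the delicate point is to arrange the hypotheses of \cite{W2} and \cite{Lninv} so that it is the model $B\otimes U$ (and not $A\otimes U$) that carries the tracial-rank information, and to check that the full ordered, trace-paired invariant — not merely the underlying groups — survives the two localizations.
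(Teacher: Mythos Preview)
Your setup matches the paper exactly: choose an odometer $(X,\af)$ with $\af^k$ minimal (Lemma~\ref{odok}), apply Proposition~\ref{KS} to get $\af\times\bt$ minimal, and invoke Theorem~\ref{MT1} to conclude that $B=C(X\times\Om)\rtimes_{\af\times\bt}\Z$ has tracial rank at most one. The divergence is in how you transfer this conclusion back to $A$.

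The paper does \emph{not} compute and match Elliott invariants. Instead it uses the natural unital embedding $\imath:A\hookrightarrow B$ (sending $C(\Om)\to C(X\times\Om)$ as constants and the implementing unitary to the implementing unitary) and checks two trace conditions: (i) $\imath_\sharp:T(B)\to T(A)$ is an affine homeomorphism, because the odometer is uniquely ergodic and every $(\af\times\bt)$-invariant state on $C(X)\otimes C(\Om)$ factors as $\tau_0\otimes\tau_1$; and (ii) all traces of $B$ agree on $K_0(B)$, because by Lemma~\ref{Ltrace} one has $\rho_B(K_0(B))=\rho_B(K^0(X,\af,\Z))$ and $\tau_0$ is fixed. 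With $TR(B)\le 1$ and these two conditions, Theorem~4.2 of \cite{W2} gives $TR(A\otimes U)\le 1$ \emph{directly}; then \cite{LS} extends this to all infinite type UHF-algebras. The ${\cal Z}$-stability of $A$ itself comes from \cite{TW} (finite nuclear dimension), not from \cite{W2}.

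Your route through classification has the circularity you yourself flag, and you do not resolve it: the theorem in \cite{Lninv} requires \emph{both} algebras to lie in ${\cal A}$, so you cannot conclude $A\otimes U\cong B\otimes U$ without already knowing $TR(A\otimes U)\le 1$. You also misattribute the content of \cite{W2}: its role here is precisely the embedding transfer of tracial rank, not ${\cal Z}$-stability. Finally, the Elliott-invariant matching you sketch is more delicate than indicated; in general $K_i(A)\otimes K_0(U)$ and $K_i(B)\otimes K_0(U)$ sit in extensions that need not be canonically identified, and you would have to control the extension classes, the order, and the trace pairing simultaneously. The paper's argument bypasses all of this by never leaving the embedding $\imath$.
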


\begin{proof}
First we note that since $\Om$ has finite covering dimension, it follows from \cite{TW} that
$A$ has finite nuclear dimension.
Let $\af$ be an odometer action on the Cantor set such that
$\af^k$ is also minimal.
It follows from \ref{KS}  that $\af\times \bt$ is a minimal action. Let $B=C(X\times \Om)\rtimes_{\af\times\bt}\Z.$

It follows from
\ref{MT1} that $B$ has tracial rank at most one.
Consider the embedding $\imath: A\to B$ that sends
 $C(\Om)\to C(X\times \Om)$ and sends implementing
unitary to the implementing unitary in a natural way.
Any tracial state $\tau$ of $B$ is given by an
$\af\times \bt$-invariant Borel probability measure.
Let $\tau_0$ be the unique tracial state on $C(X)\rtimes_{\af}\Z$ which is given
by the $\af$-invariant Borel probability measure. Then each $\af\times \bt$ -invariant
tracial state on $C(X\times \Om)=C(X)\otimes C(\Om)$  has the form $\tau_0\otimes \tau_1,$ where $\tau_1$ is a $\bt$-invariant tracial state on $C(\Om).$  It follows
 that  the map $\imath_{\sharp}: T(B)\to T(A)$ induced by $\imath$ is a homeomorphism.
 It follows from \ref{Ltrace}  that $\rho_B(K_0(B))=\rho_B(K^0(X,\af, K_0(C(\Om)))).$
 Note also since $\Om$ is connected, $\rho_{C(\Om)}(K_0(C(\Om)))=\Z.$
 Therefore if $\tau_0\otimes \tau_1'$ and $\tau_0\otimes \tau_2'$ are two tracial states
 then they induce the same state on $K^0(X, \af, K_0(C(\Om))).$
 It follows that they induce the same state on $K_0(B).$  It follows from
 Theorem 4.2 of \cite{W2} that $A\otimes U$ has tracial rank at most one. It follows from \cite{LS}
 that $A\otimes U$ has tracial rank at most one for all infinite dimensional UHF-algebras $U.$
 Since $A=C(\Om)\rtimes_\bt \Z,$ it satisfies the Universal Coefficient Theorem. Furthermore,
 by \cite{TW}, $A$ is ${\cal Z}$-stable. Therefore $A\in {\cal A}.$ 

\end{proof}

\begin{thm}\label{MT3}
Let $\Om$ be a connected compact metric space with finite covering dimension such
that $H^1(\Om, \Z)=\{0\}$ and  $K_i(C(\Om))=\Z\oplus G_{i},$ where $G_i$ is a finite
group. Suppose that $\bt: \Om\to \Om$ is a minimal homeomorphism.
Then $A=C(\Om)\rtimes_\bt\Z$ has rational tracial rank at most one and
is in ${\cal A}.$
\end{thm}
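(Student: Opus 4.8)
The plan is to reduce Theorem \ref{MT3} to Theorem \ref{MT2}. The only gap between the two statements is that \ref{MT2} requires both the hypothesis $[{\tilde\bt}^k]=[{\rm id}]$ in $KL(C(\Om),C(\Om))$ for some $k$ and the connectedness/$U(C(\Om))=U_0(C(\Om))$ assumption, whereas \ref{MT3} only assumes $H^1(\Om,\Z)=\{0\}$ together with the $K$-theoretic splitting $K_i(C(\Om))=\Z\oplus G_i$. First I would note that $H^1(\Om,\Z)=\{0\}$ is exactly the statement $U(C(\Om))=U_0(C(\Om))$ (this equivalence is already invoked in the proof of \ref{RP3T1}), so that hypothesis of \ref{MT2} is automatic. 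The substantive task is therefore to produce the integer $k$ with $[{\tilde\bt}^k]=[{\rm id}]$ in $KL$, using only the structure of the $K$-groups and the fact that $\bt$ is minimal.

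The key observation is that since $\Om$ is connected, ${\tilde\bt}_{*0}$ fixes the class of the identity $1_{C(\Om)}$, which generates the $\Z$ summand of $K_0$; hence on $K_0(C(\Om))=\Z\oplus G_0$ the induced automorphism has the lower-triangular form displayed in (\ref{RP3T-3}), and the group of all such automorphisms is finite because $G_0$ is finite. The same argument applies to the torsion coefficient groups $K_i(C(\Om),\Z/m\Z)$, each of which is a finite abelian group whose automorphism group is finite. On $K_1(C(\Om))=\Z\oplus G_1$ I would control the free part via a Lefschetz-number argument exactly as in the proof of \ref{RP3T1}: minimality forbids fixed points, so the Lefschetz number vanishes, forcing ${\tilde\bt}_*$ to act as $+1$ on the relevant top rational homology and hence as the identity on the $\Z$ summand of $K_1$; the action on the finite summand $G_1$ again lies in a finite automorphism group. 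Taking $k$ to be the least common multiple of the orders of these finitely many finite automorphism groups — one for $K_0$, one for $K_1$, and one for each of the finitely many torsion coefficient groups $\underline{K}(C(\Om))$ that need to be checked — gives $[{\tilde\bt}^k]=[{\rm id}]$ on the entire total $K$-theory with coefficients.

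To promote this to an equality in $KL(C(\Om),C(\Om))$ I would invoke the universal-coefficient identification $KL(C(\Om),C(\Om))=\Hom_\Lambda(\underline{K}(C(\Om)),\underline{K}(C(\Om)))$, precisely as in (\ref{RP3T1-4}), and then apply the finiteness result 2.11 of \cite{DL}: because $K_i(C(\Om))$ is finitely generated with $\Z\oplus(\text{finite})$ structure, it suffices to check the action on integral $K$-theory and on the coefficient groups $\Z/m\Z$ for finitely many moduli $m$ dividing the exponents of $G_0,G_1$. All of these have been arranged to be the identity for the chosen $k$, so $[{\tilde\bt}^k]=[{\rm id}]$ in $KL(C(\Om),C(\Om))$. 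With this $k$ in hand, the hypotheses of \ref{MT2} are satisfied, and its conclusion — that $A=C(\Om)\rtimes_\bt\Z$ has rational tracial rank at most one and lies in ${\cal A}$ — is exactly the assertion of \ref{MT3}.

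The main obstacle is the bookkeeping in identifying a single finite set of coefficient moduli that suffices for the criterion in \cite{DL}: one must verify that the $\Z\oplus(\text{finite group})$ form of $K_0$ and $K_1$, via the Künneth/UCT machinery for $KL$, forces the relevant $\Hom_\Lambda$ constraints to be detected on finitely many $K$-theory-with-coefficients groups, so that the automorphism group being iterated really is finite. Once that finiteness is secured, the construction of $k$ and the reduction to \ref{MT2} are routine, so I expect no difficulty beyond this $KL$-theoretic finiteness check.
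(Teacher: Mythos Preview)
Your overall strategy is exactly the paper's: reduce to Theorem \ref{MT2} by producing an integer $k$ with $[{\tilde\bt}^k]=[{\rm id}]$ in $KL(C(\Om),C(\Om))$, using finiteness of the relevant automorphism groups together with 2.11 of \cite{DL} to reduce the $KL$ check to finitely many coefficient groups $K_i(C(\Om),\Z/j\Z)$.

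The one place you diverge is in handling the free $\Z$ summand of $K_1$: you propose a Lefschetz-number argument ``exactly as in the proof of \ref{RP3T1}.'' This step is both unnecessary and not clearly valid in the generality of \ref{MT3}. The Lefschetz fixed-point theorem needs $\Om$ to be, e.g., a compact ANR or polyhedron, which is not assumed; and even then, transferring the conclusion from rational homology to the $\Z$ summand of $K_1$ requires a Chern-character argument you have not supplied. The paper sidesteps all of this: since $G_1$ is finite, $\Aut(\Z\oplus G_1)$ is itself finite (any automorphism is determined by a sign $\pm 1$ on the $\Z$ factor, an automorphism of $G_1$, and a cross-term in $G_1$), so ${\tilde\bt}_{*1}$ has finite order automatically --- no Lefschetz input is needed. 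The same finiteness handles $K_0$ (where in addition $[1_{C(\Om)}]$ is fixed, giving the triangular form you describe), and each $K_i(C(\Om),\Z/j\Z)$ is finite by the short exact sequence in (\ref{MT3-1}), so its automorphism group is finite as well. Taking $k$ to be the product of all these orders gives (\ref{MT3-4}) and hence (\ref{MT3-5}). Drop the Lefschetz detour and your argument is the paper's.
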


\begin{proof}
This is a corollary of \ref{MT2}. We note that $U(C(\Om))=U_0(C(\Om)).$
Therefore it suffices to show that
$[{\tilde \bt}^k]=[{\rm id}]$ in $KL(C(\Om)).$
Similar to  the proof of \ref{RP3T1}, it is easy to see that there exists an integer $k_i\ge 1$ such that
$(({\tilde \bt})^{k_i})_{*i}=({\tilde \bt})_{*i}^{k_i}={\rm id}_{K_i(C(\Om))},$ $i=0,1.$

Let $r_i$ be the order of $G_i.$ For each $1\le j\le (r_i)!,$ there exists a short exact sequence
\beq\label{MT3-1}
0\to \Z/j\Z\oplus G_i/jG_i\to K_i(C(\Om),\Z/j\Z)\to G_i^{(j)}\to 0,
\eneq
where $G_i^{(j)}=\{g\in K_i(C(\Om)): jg=0\},$ $i=0,1.$
Therefore $K_i(C(\Om),\Z/j\Z)$ is a finite group, $i=0,1.$
Note
\beq\label{MT3-2}
[{\tilde \bt}]|_{K_i(C(\Om), \Z/j\Z)}\in {\rm Aut}(K_i(C(\Om), \Z/j\Z)).
\eneq
However,  ${\rm Aut}(K_i(C(\Om), \Z/j\Z))$ is a finite group.  Therefore, for some
$m_{i,j}\ge 1,$
$$[{\tilde \bt}^{m_{i,j}}]|_{K_i(C(\Om), \Z/j\Z)}={\rm id}_{K_i(C(\Om), \Z/j\Z)},\,\,i=0,1.$$
Put
\beq\label{MT3-3}
k=k_1\cdot k_2\cdot \prod_{1\le j\le (r_i)!, \, i=0,1}m_{i,j}.
\eneq
One checks that
\beq\label{MT3-4}
({\tilde \bt}^k)_{*i}={\rm id}_{K_i(C(\Om))}\andeqn [{\tilde \bt}^k]|_{K_i(C(\Om), \Z/j\Z)}={\rm id}_{K_i(C(\Om), \Z/j\Z)},
\eneq
$j=1,2,...,(r_i)!,$ $i=0,1.$
Since $r_i$ is the order of $G_i,$ by 2.11 of \cite{DL},
\beq\label{MT3-5}
[{\tilde \bt}^k]=[{\rm id}_{C(\Om)}].
\eneq
\end{proof}

\begin{cor}\label{MCS3}
Let $\bt$ be a minimal homeomorphism on $S^{2n+1}$ ($n\ge 1$).  Then
$A=C(S^{2n+1})\rtimes_\bt \Z$ has rational tracial rank at most one and is in ${\cal A}.$
\end{cor}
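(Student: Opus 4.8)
The plan is to derive this as a direct application of Theorem \ref{MT3}, so the whole task reduces to checking that $\Om=S^{2n+1}$ satisfies the three hypotheses of that theorem. First I would record that $S^{2n+1}$ is a connected compact metric space of finite covering dimension (namely $2n+1$), so the dimension requirement holds at once, and $\bt$ is a minimal homeomorphism by assumption.

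Next I would verify the cohomological hypothesis $H^1(S^{2n+1},\Z)=\{0\}$. Since $n\ge 1$, we have $2n+1\ge 3$, and the first integral cohomology of any sphere $S^m$ with $m\ge 2$ vanishes; equivalently $U(C(S^{2n+1}))=U_0(C(S^{2n+1}))$. This is the one place where the restriction $n\ge 1$ is essential, since $S^1$ would fail this condition.

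For the $K$-theoretic hypothesis $K_i(C(S^{2n+1}))=\Z\oplus G_i$ with $G_i$ finite, I would invoke the standard computation of the topological $K$-theory of odd spheres via Bott periodicity: $K^0(S^{2n+1})=\Z$ and $K^1(S^{2n+1})=\Z$, so that $K_0(C(S^{2n+1}))=\Z$ and $K_1(C(S^{2n+1}))=\Z$. Both groups therefore have the required form, with $G_0=G_1=\{0\}$ the trivial (hence finite) group. With all three hypotheses in hand, Theorem \ref{MT3} applies directly and yields that $A=C(S^{2n+1})\rtimes_\bt\Z$ has rational tracial rank at most one and lies in ${\cal A}.$

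Because all of the real work is carried out inside Theorem \ref{MT3} (and ultimately in \ref{MT1}, \ref{MtechL} and \ref{TUni}), there is essentially no obstacle at the level of the corollary itself; the only point demanding any care is the hypothesis $n\ge 1,$ which is exactly what guarantees $H^1(S^{2n+1},\Z)=\{0\}$ and hence the connectedness of the unitary group of $C(S^{2n+1}).$
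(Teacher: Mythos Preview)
Your proof is correct. The paper takes a slightly different but closely related route: instead of invoking Theorem~\ref{MT3}, it applies Theorem~\ref{MT2} directly with $k=1$, after observing that a minimal $\bt$ on $S^{2n+1}$ has no fixed points, hence by the Lefschetz fixed-point theorem $\bt$ has degree one, so $\tilde\bt_{*i}={\rm id}$ on both $K$-groups and (there being no torsion) $[\tilde\bt]=[{\rm id}]$ in $KL(C(S^{2n+1}),C(S^{2n+1}))$. Your route via Theorem~\ref{MT3} is perfectly legitimate and arguably cleaner here, since the $K$-theory of odd spheres is torsion-free and the hypotheses of \ref{MT3} are immediate; the paper's direct use of \ref{MT2} has the small advantage of exhibiting explicitly that $k=1$ already works, rather than passing through the finite-order argument inside the proof of \ref{MT3}. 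Since \ref{MT3} is itself deduced from \ref{MT2}, the two arguments ultimately coincide.
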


\begin{proof}
We have noted that $U(C(S^{2n+1}))=U_0(C(S^{2n+1}))$  and any minimal homeomorphism
$\bf$ has the property $[\bt]=[{\rm id}].$ So \ref{MT2} applies.
%That $A$ has rationally tracial rank at most one follows immediately from the above theorem and the proof of
%\ref{CS3}. To see it is in the classifiable class, we note that $A$ satisfies the Universal Coefficient Theorem ($i=1,2$). It follows from \cite{TW} that $A_i$ is ${\cal Z}$-stable.
%Therefore the result in \cite{Lninv} applies.
\end{proof}

\begin{cor}\label{CClass1}
Let $\bt_1,\bt_2: S^{2n+1}\to S^{2n+1}$ ($n\ge 1$) be two minimal homeomorphisms and let
$A_i=C(S^{2n+1})\rtimes_{\bt_i} \Z,$ $i=1,2.$
Then $A_1\cong A_2$ if and only if  $T(A_1)\cong T(A_2).$
\end{cor}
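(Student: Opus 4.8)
The plan is to use the classification theorem to reduce the corollary to a comparison of Elliott invariants. By Corollary \ref{MCS3}, both $A_1$ and $A_2$ lie in the class ${\cal A}$, so by Definition \ref{DfA} they are isomorphic if and only if they have isomorphic Elliott invariants $(K_0(A_i),K_0(A_i)_+,[1_{A_i}],K_1(A_i),T(A_i),\rho_{A_i})$. The forward implication is immediate, since isomorphic \CA s have affinely homeomorphic tracial state spaces. Hence the whole content is the reverse implication, and for this I would show that every piece of the Elliott invariant except the tracial simplex is literally the same for $A_1$ and $A_2$, and that the pairing $\rho_{A_i}$ is rigid in a way compatible with an arbitrary affine homeomorphism $T(A_1)\cong T(A_2)$.

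First I would compute the $K$-theory. Since $\bt_i$ is minimal it has no fixed point, so $[\tilde\bt_i]=[\id]$ in $KK(C(S^{2n+1}),C(S^{2n+1}))$ and in particular $(\tilde\bt_i)_{*0}=\id$ and $(\tilde\bt_i)_{*1}=\id$. Using $K_0(C(S^{2n+1}))=\Z$ and $K_1(C(S^{2n+1}))=\Z$ (both torsion free), the Pimsner--Voiculescu exact sequence has all connecting maps $\id-(\tilde\bt_i)_{*}^{-1}$ equal to zero, and therefore yields split short exact sequences $0\to\Z\to K_0(A_i)\to\Z\to 0$ and $0\to\Z\to K_1(A_i)\to\Z\to 0$. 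Hence $K_0(A_i)\cong\Z^2$ and $K_1(A_i)\cong\Z^2$, and the subgroup $\Z[1_{A_i}]=\imath_{*0}(K_0(C(S^{2n+1})))$ is one of the two free summands of $K_0(A_i)$.

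Next I would pin down the pairing and the order. Because $S^{2n+1}$ is connected with $H^1(S^{2n+1},\Z)=\{0\}$, one has $U(C(S^{2n+1}))=U_0(C(S^{2n+1}))$, so the first part of Lemma \ref{Ltrace} gives $\rho_{A_i}(K_0(A_i))=\rho_{A_i}(\imath_{*0}(K_0(C(S^{2n+1}))))=\Z\cdot 1$, with $\rho_{A_i}([1_{A_i}])=1$. Thus I can choose a basis $\{[1_{A_i}],\eta_i\}$ of $K_0(A_i)\cong\Z^2$ with $\rho_{A_i}(\eta_i)=0$, so that $\rho_{A_i}(m[1_{A_i}]+n\eta_i)=m\cdot 1$. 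Since $A_i\in{\cal A}$ it is simple and ${\cal Z}$-stable, hence has strict comparison and weakly unperforated $K_0$, so the order is determined by traces and $K_0(A_i)_+=\{m[1_{A_i}]+n\eta_i:m>0\}\cup\{0\}$. In particular the ordered group $(K_0(A_i),K_0(A_i)_+,[1_{A_i}])$, the group $K_1(A_i)$, and the formula for $\rho_{A_i}$ are all the same for $i=1,2$, independently of $\bt_i$.

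Finally, given an affine homeomorphism $\gamma:T(A_2)\to T(A_1)$, I would assemble an isomorphism of Elliott invariants. Take the identity identifications $\kappa_0:K_0(A_1)\to K_0(A_2)$ sending $[1_{A_1}]\mapsto[1_{A_2}]$ and $\eta_1\mapsto\eta_2$, and $\kappa_1:K_1(A_1)\to K_1(A_2)$ the identity of $\Z^2$. Since $\gamma$ is affine it fixes the constant function $1$, so the induced $\gamma^*$ satisfies $\gamma^*\circ\rho_{A_1}(m[1_{A_1}]+n\eta_1)=m\cdot 1=\rho_{A_2}\circ\kappa_0(m[1_{A_1}]+n\eta_1)$; that is, $\kappa_0$, $\kappa_1$ and $\gamma$ are compatible with the pairings and the orders, and preserve the class of the unit. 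This is an isomorphism of Elliott invariants, so Definition \ref{DfA} yields $A_1\cong A_2$. The main obstacle is really the third step: everything hinges on the identity $\rho_{A_i}(K_0(A_i))=\Z\cdot 1$ together with strict comparison, which is what forces the only $\bt$-dependence of the invariant to reside in the tracial simplex $T(A_i)$; once this rigidity is established, the final assembly is routine.
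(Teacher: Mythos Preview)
Your proposal is correct and follows essentially the same route as the paper: compute $K_i(A_j)\cong\Z^2$ via the Pimsner--Voiculescu sequence (using that $\bt_j$ has no fixed point, hence $(\tilde\bt_j)_*=\id$), observe that the order on $K_0(A_j)$ is governed by a single rank copy of $\Z$ so that all traces agree on $K_0$, and conclude that the Elliott invariant is determined by $T(A_j)$, whence Corollary~\ref{MCS3} and the classification theorem of \cite{Lninv} finish. Your version is in fact more explicit than the paper's---you invoke Lemma~\ref{Ltrace} to identify $\rho_{A_i}(K_0(A_i))=\Z\cdot 1$ and ${\cal Z}$-stability (weak unperforation) to pin down $K_0(A_i)_+$---but these are exactly the justifications the paper's brief argument implicitly relies on.
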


\begin{proof}
One computes, by Pimsner-Voiculescu exact sequence (\cite{PV}) that
$K_i(A_j)=\Z\oplus \Z$ for  $i=0,1$ and $j=1,2.$  One also computes that
the order of $K_0(A_j)$ is determined by one copy of $\Z$ from
the rank of projections of $M_k(C(S^{2n+1}))$ for all $k$ and
$K_0(A_1)$ and $K_0(A_2)$ are unital order isomorphic.  Furthermore,
all traces agree on $K_0(A_1)=K_0(A_2).$
Therefore their Elliott invariant
is determined by $T(A_i),$ $i=1,2.$  Now, by \ref{MCS3},  the classification theorem in \cite{Lninv} applies.
\end{proof}

\begin{cor}\label{MCRP3}
Let $\bt$ be a minimal homeomorphism on $RP^{2n+1}$ (for $n\ge 1$).
Then
$A=C(RP^{2n+1})\rtimes\Z$ has rational tracial rank at most one and is in ${\cal A}.$
\end{cor}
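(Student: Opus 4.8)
The plan is to obtain this as an immediate application of Theorem \ref{MT3}, so the only work is to confirm that $\Om=RP^{2n+1}$ satisfies its three standing hypotheses. First I would note that $RP^{2n+1}$ is a connected compact smooth manifold of dimension $2n+1,$ hence a connected compact metric space of finite covering dimension. Second, as already recorded in the proof of \ref{RP3T1}, it is classical that $H^1(RP^{2n+1},\Z)=\{0\}$: for $2n+1\ge 3$ the relevant low-degree integral homology is $H_1(RP^{2n+1},\Z)=\Z/2\Z,$ which contributes nothing to $H^1$ by universal coefficients.

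Next I would record the $K$-theory in the shape demanded by \ref{MT3}. By (\ref{RP3T-2}) one has $K_0(C(RP^{2n+1}))=\Z\oplus G_0$ with $G_0$ a finite abelian group, and $K_1(C(RP^{2n+1}))=\Z.$ Writing $\Z=\Z\oplus\{0\}$ we see that for both $i=0$ and $i=1$ the group $K_i(C(RP^{2n+1}))$ has the form $\Z\oplus G_i$ with $G_i$ finite (take $G_1=\{0\}$), which is precisely the standing $K$-theoretic hypothesis of \ref{MT3}. Note that \ref{MT3} itself deduces $U(C(RP^{2n+1}))=U_0(C(RP^{2n+1}))$ from the vanishing of $H^1,$ so no separate verification of unitary connectedness is required.

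With these verifications in place, and since $\bt$ is by hypothesis a minimal homeomorphism of $RP^{2n+1},$ Theorem \ref{MT3} applies verbatim and yields that $A=C(RP^{2n+1})\rtimes_\bt\Z$ has rational tracial rank at most one and lies in ${\cal A}.$ There is no genuine obstacle at this stage: all the substantive effort --- producing an odometer $\af$ for which $\af^k$ is minimal and establishing $[{\tilde \bt}^k]=[{\rm id}]$ in $KL(C(RP^{2n+1}),C(RP^{2n+1}))$ via the finiteness of the relevant mod-$j$ $K$-groups and their automorphism groups, together with the Lefschetz fixed-point computation that pins down the action on top cohomology --- has already been carried out inside \ref{RP3T1} and \ref{MT3}. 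Thus the corollary reduces to matching hypotheses with \ref{MT3}.
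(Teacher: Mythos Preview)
Your proposal is correct and matches the paper's own proof essentially verbatim: the paper likewise just records that $U(C(RP^{2n+1}))=U_0(C(RP^{2n+1}))$ (equivalently $H^1(RP^{2n+1},\Z)=\{0\}$), that $K_0(C(RP^{2n+1}))=\Z\oplus G$ with $G$ finite and $K_1(C(RP^{2n+1}))=\Z$, and then invokes Theorem~\ref{MT3}. Your added remarks (universal coefficients, the Lefschetz computation being already absorbed into \ref{RP3T1} and \ref{MT3}) are accurate elaborations but not additional steps.
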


\begin{proof}
We have noted that $U(C(RP^{2n+1}))=U_0(C(RP^{2n+1}))$ and
$K_0(C(RP^{2n+1}))=\Z\oplus G,$ where $G$ is a finite group and
$K_1(C(RP^{2n+1}))=\Z.$ Thus \ref{MT3} applies.
%This follows from \ref{MT2} and the proof of \ref{RP3T1} and the proof of \ref{MCS3}.
\end{proof}

\begin{cor}\label{CCRP3}
Let $\bt_1$ and $\bt_2$ be two minimal homeomorphisms on $RP^{2n+1}$ (for $n\ge 1$) and let
$A_i=C(RP^{2n+1})\rtimes_{\bt_i}\Z,$ $i=1,2.$
Then $A_1\cong A_2$ if and only if 
\beq\label{CCRP3-1}
K_1(A_1)\cong K_1(A_2),  \,\,\,(\bt_1)_*=(\bt_2)_*\,\,\, {\rm on}\,\,\,K_0(C(RP^{2n+1}))\andeqn
T(A_1)=T(A_2).
\eneq
\end{cor}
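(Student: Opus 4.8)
The plan is to reduce the whole statement to the Elliott classification theorem and then read off the three listed conditions from the Elliott invariant. By \ref{MCRP3} both $A_1$ and $A_2$ belong to the class ${\cal A}$, so by \cite{Lninv} (see \ref{DfA}) we have $A_1\cong A_2$ if and only if $A_1$ and $A_2$ have isomorphic Elliott invariants $(K_0(A),K_0(A)_+,[1_A],K_1(A),T(A),\rho_A)$. The first step is to strip this invariant down. Since $\Om=RP^{2n+1}$ is connected, every tracial state of $A_i$ restricts on $C(\Om)$ to the rank; hence by \ref{Ltrace} the image $\rho_{A_i}(K_0(A_i))$ consists of the constant functions $\Z\subset\Aff(T(A_i))$ and the torsion of $K_0(A_i)$ pairs trivially, so $\rho_{A_i}$ carries no information beyond $T(A_i)$ and $[1_{A_i}]$. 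As $A_i$ is ${\cal Z}$-stable and simple it has strict comparison, so the order $K_0(A_i)_+$ is itself determined by $\rho_{A_i}$ and $T(A_i)$. Thus the Elliott invariant of $A_i$ is recorded by the unital group $(K_0(A_i),[1_{A_i}])$, the group $K_1(A_i)$, and the simplex $T(A_i)$ with the canonical rank pairing. Finally, exactly as in \ref{RP3T1}, minimality forces the Lefschetz number to vanish and hence $(\bt_i)_{*1}=\id$ on $K_1(C(\Om))=\Z$, so in the Pimsner--Voiculescu six-term sequence (\cite{PV}) only the action $(\bt_i)_{*0}$ on $K_0(C(\Om))$ is nontrivial.

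For the ``if'' direction I would assume the three conditions. The hypothesis $(\bt_1)_{*}=(\bt_2)_{*}$ on $K_0(C(\Om))$ together with $(\bt_i)_{*1}=\id$ makes the two Pimsner--Voiculescu sequences coincide map-by-map; since the quotient term $\Ker(\id-(\bt_i)_{*1}^{-1})=\Z$ is free, each $K_0$-extension splits and one obtains a canonical unital isomorphism $(K_0(A_1),[1_{A_1}])\cong(K_0(A_2),[1_{A_2}])$, the units being the common image of $[1_{C(\Om)}]$. By $T(A_1)=T(A_2)$ and the fact that $\rho$ is the rank pairing on both sides, this isomorphism is order preserving. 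The hypothesis $K_1(A_1)\cong K_1(A_2)$ supplies the last piece, so the Elliott invariants agree and $A_1\cong A_2$.

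For the ``only if'' direction an isomorphism $A_1\cong A_2$ immediately delivers $K_1(A_1)\cong K_1(A_2)$ and an affine homeomorphism $T(A_1)\cong T(A_2)$, which are two of the three conditions. The remaining task is to recover $(\bt_1)_{*}=(\bt_2)_{*}$ on $K_0(C(\Om))$. Here the key is that $(\bt_i)_{*0}$ is not merely a group automorphism but a unital ring automorphism of $K^0(RP^{2n+1})=\Z\oplus\Z/2^n$; there are only finitely many of these, and a direct computation shows that distinct ones produce non-isomorphic torsion subgroups of $\Coker(\id-(\bt_i)_{*0}^{-1})$. Since this cokernel is precisely the torsion subgroup of $K_0(A_i)$ (the $K_0$-extension splitting off a free $\Z$), an isomorphism $K_0(A_1)\cong K_0(A_2)$ forces $(\bt_1)_{*0}=(\bt_2)_{*0}$; as $(\bt_i)_{*1}=\id$ always, condition $(\bt_1)_{*}=(\bt_2)_{*}$ follows.

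The main obstacle is precisely this last recovery step, and two points deserve care. First, one must verify that the finitely many admissible ring automorphisms of $K^0(RP^{2n+1})$ really are separated by the torsion of $\Coker(\id-(\bt)_{*0}^{-1})$; this uses the ring (not merely additive) structure of $K^0(RP^{2n+1})$ and an explicit description of how $\id-(\bt)_{*0}^{-1}$ acts on $\Z/2^n$. Second, one should note that the condition $K_1(A_1)\cong K_1(A_2)$ is genuinely independent of $(\bt_1)_{*}=(\bt_2)_{*}$: the extension class of $0\to\Z\to K_1(A)\to\Ker(\id-(\bt)_{*0}^{-1})\to 0$ lives in the torsion detected by $\Ker(\id-(\bt)_{*0}^{-1})$ and is governed by the action of $\bt$ on $K$-theory with coefficients (compare the use of \cite{DL} in \ref{RP3T1}), not by $(\bt)_{*0}$ alone, so it must be assumed and tracked separately rather than deduced from the action on $K_0(C(\Om))$.
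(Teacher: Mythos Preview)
Your ``if'' direction is essentially the paper's proof: both use \ref{MCRP3} to place $A_1,A_2$ in ${\cal A}$, compute $K_0(A_i)$ from the Pimsner--Voiculescu sequence using the hypothesis $(\bt_1)_{*0}=(\bt_2)_{*0}$ (and the automatic $(\bt_i)_{*1}=\id$), observe via \ref{Ltrace} that $\rho_{A_i}(K_0(A_i))=\Z$ so the order and pairing are determined, and conclude by \cite{Lninv}. Your treatment of the order via ${\cal Z}$-stability and strict comparison is slightly more explicit than the paper's, but the content is the same.

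Where you diverge from the paper is in the ``only if'' direction. The paper's written proof addresses \emph{only} the ``if'' direction: it begins ``it suffices to show that $A_1$ and $A_2$ have the same Elliott invariant'' and then verifies this from the three hypotheses. It supplies no argument that an isomorphism $A_1\cong A_2$ forces $(\bt_1)_{*0}=(\bt_2)_{*0}$; the conditions $K_1(A_1)\cong K_1(A_2)$ and $T(A_1)\cong T(A_2)$ are of course immediate from $A_1\cong A_2$, but the middle condition is not, and the paper simply does not treat it. You correctly identified this as the nontrivial point and proposed to recover $(\bt_i)_{*0}$ from the torsion of $\Coker(\id-(\bt_i)_{*0}^{-1})$.

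That proposal, however, has a genuine gap. You assert that the finitely many unital ring automorphisms of $K^0(RP^{2n+1})\cong\Z\oplus\Z/2^n$ are separated by the isomorphism type of the torsion of $\Coker(\id-(\bt)_{*0}^{-1})$, but you do not carry out the computation, and it is not clear this is true. Writing such an automorphism as $\nu\mapsto a\nu$ for an odd $a$ (subject to the relation $\nu^2=-2\nu$), the map $\id-(\bt)_{*0}^{-1}$ on the torsion summand is multiplication by an even integer, and different odd units $a$ can yield the same $2$-adic valuation of $1-a^{-1}$, hence isomorphic cokernels. So even granting that $(\bt_i)_{*0}$ must be a ring automorphism, the torsion of $K_0(A_i)$ alone need not pin it down. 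In short: your ``if'' direction matches the paper; your ``only if'' direction goes beyond what the paper proves, but the key separation claim is unsubstantiated and likely false as stated.
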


\begin{proof}
By \ref{MCRP3}, it suffices to show that $A_1$ and $A_2$ have the same
Elliott invariant.  The assumption shows that
\beq\label{CCRP3-2}
&&K_0(C(RP^{2n+1}))/\{z-z\circ (\bt_1)_*: z\in K_0(C(RP^{2n+1}))\}\\
&\cong &
K_0(C(RP^{2n+1}))/\{z-z\circ (\bt_2)_*: z\in K_0(C(RP^{2n+1}))\}\cong \Z\oplus G_0',
\eneq
where $G_0'$ is a quotient of ${\rm Tor}(K_0(C(RP^{2n+1})).$
Moreover they are order isomorphic. By Pimsner-Voiculescu exact sequence,
we may write
\beq\label{CCRP3-3}
K_0(A_1)=(\Z\oplus G_0')\oplus \Z\cong K_0(A_2).
\eneq
Since $H^1(RP^{2n+1},\Z)=\{0\},$ it follows that
\beq\label{CCRP3-4}
\rho_{A_1}(K_0(A_1))=\rho_{A_1}(\Z\oplus G_0')=\Z\andeqn \rho_{A_2}(K_0(A_2))=\Z.
\eneq
It follows that $K_0(A_1)$ and $K_0(A_2)$ are unital order isomorphic.
Since all traces of $A_i$ agree on $K_0(A_i),$ $i=1,2.$ It follows that
$A_1$ and $A_2$ have isomorphic Elliott invariant. Thus \cite{Lninv} applies.
\end{proof}

\end{document}